\documentclass[12pt]{amsart}%[11pt]{article}

\usepackage{amsmath}
\usepackage{latexsym}
\usepackage{amssymb}
\usepackage{amsfonts}
\usepackage{multirow}
\usepackage{graphics}
\usepackage{hyperref}

\oddsidemargin=0.4in
\evensidemargin=0.4in
\topmargin=-0.2in
\textwidth=15cm
\textheight=24cm

\newcommand\di{\,\big|\,} 
\newcommand\Hol{\mathrm{Hol}} \newcommand\Rad{\mathrm{Rad}} \newcommand\Soc{\mathrm{Soc}}
\newcommand\calB{{\mathcal B}}  \newcommand\calT{{\mathcal T}}
 \newcommand\G{\mathrm{G}}
\newcommand\D{\mathrm{D}} \newcommand\Q{\mathrm{Q}}
 \newcommand\M{\mathrm{M}} \newcommand\Z{\mathrm{C}}
\newcommand\Pa{\mathrm{P}}
\newcommand\N{\mathrm{N}}
\newcommand\X{\mathrm{X}}
\newcommand\C{\mathbf{C}}

\newcommand\Aut{\mathrm{Aut}}  \newcommand\Out{\mathrm{Out}}

\newcommand\Ome{\Omega}

\newcommand\Del{\Delta}
\newcommand\GF{\mathrm{GF}}
 \newcommand\POm{\mathrm{P}{\Omega}} \newcommand\PSO{\mathrm{PSO}}
 \newcommand\PSp{\mathrm{PSp}}
 \newcommand\GaL{\mathrm{\Gamma L}}
\newcommand\AGammaL{\mathrm{A\Gamma L}}
 \newcommand\PGaL{\mathrm{P\Gamma L}} 
\newcommand\A{\mathrm{A}} \newcommand\Sym{\mathrm{Sym}} \newcommand\Sy{\mathrm{S}}
\newcommand\PSL{\mathrm{PSL}} \newcommand\PGL{\mathrm{PGL}}
\newcommand\GL{\mathrm{GL}} \newcommand\SL{\mathrm{SL}}
\newcommand\AGL{\mathrm{AGL}}
\newcommand\PSU{\mathrm{PSU}}  \newcommand\GU{\mathrm{GU}}
\newcommand\Sz{\mathrm{Sz}}

\newcommand\magma{\textsc{Magma}}

\newtheorem{remark}{Remark}[section]
\newtheorem{theorem}{Theorem}[section]%
\newtheorem{lemma}[theorem]{Lemma}%

\begin{document}

\title[Metacyclic transitive subgroups]
{Finite quasiprimitive permutation groups with a metacyclic transitive subgroup}

\thanks{This work was partially supported by NSFC Grants (11231008, 11461007, 11771200)}

\author{Cai Heng Li}
\address{(Li) Department of Mathematics, Southern University of Science and Technology\\Shenzhen, Guangdong 518055\\P. R. China}
\email{lich@sustc.edu.cn}

\author{Jiangmin Pan}
\address{(Pan) School of Statistics and Mathematics\\Yunnan University of Finance and Economics\\Kunming, Yunnan 650221\\P. R. China}
\email{jmpan@ynu.edu.cn}

\author{Binzhou Xia}
\address{(Xia) School of Mathematics and Statistics\\The University of Melbourne\\Parkville, VIC 3010\\Australia}
\email{binzhoux@unimelb.edu.au}
\maketitle

\begin{abstract}
In this paper, we classify finite quasiprimitive permutation groups with a metacyclic transitive subgroup, solving a problem initiated by Wielandt in 1949. It also involves the classification of factorizations of almost simple groups with a metacyclic factor.

\textit{Key words:} quasiprimitive groups; metacyclic groups

\textit{AMS Subject Classification (2010):} 20B15, 20D40
\end{abstract}

\section{Introduction}

A transitive permutation group $G\leqslant\Sym(\Ome)$ is called {\it primitive} if $\Ome$ has no nontrivial $G$-invariant partition. The study of primitive permutation groups containing a certain transitive subgroup played an important role in the history of permutation group theory, for which the reader is referred to Problem~3 in the excellent survey \cite{Neumman} of Neumann or the textbooks~\cite{DM-book,Wielandt}. In 1949, Wielandt \cite{Wielandt1949} proved that finite primitive permutation groups containing a regular dihedral group are $2$-transitive and initiated the following problem.

\vskip0.1in
\noindent{\bf Problem A.} Classify finite primitive permutation groups which contain a regular
metacyclic subgroup.
\vskip0.1in

Recall a group $R$ is called {\it metacyclic} if there is a normal subgroup $N$ of $R$ such that both $N$ and $R/N$ are cyclic.
Other partial results for Problem A are known for cyclic subgroups
(Burnside \cite{Burnside} and Schur \cite{Schur}),
a family of metacyclic subgroups of order $4n$ for positive integers $n$ (Scott \cite{Scott}),
metacyclic subgroups of order $3p$ for certain special primes $p$
(Nagai \cite{Nagai}), metacyclic subgroups of order
$p^3$ with $p$ prime (Jones \cite{Jones72})
and metacyclic subgroups of square-free order (the first author and Seress \cite{LS05}).

The main purpose of this paper is to solve Problem A in a more general version,
that is, classify finite quasiprimitive permutation groups with a metacyclic transitive subgroup.
Recall that a permutation group is called {\it quasiprimitive} if each of its non-trivial normal subgroups
is transitive.
It is easily known that primitive permutation groups are quasiprimitive.
By the O'Nan-Scott-Praeger theorem, see \cite[Section~5]{q-gps},
finite quasiprimitive permutation groups can be divided into eight O'Nan-Scott types.

Primitive permutation groups with a certain transitive subgroup
also play a key role in the study of many problems in algebraic combinatorics,
especially in the study of Cayley graphs and Cayley maps.
In particular, one motivation of this work is the effort to classify edge-transitive metacirculants, an important class of symmetric graphs. A graph is called a (weak) {\it metacirculant} if it has a group of automorphisms which is vertex-transitive and metacyclic, see \cite{Song,MS}. The classification given in Theorem \ref{Quasiprimitive} below provides the foundation for the study of metacirculants. 
%In \cite{Li-Lu-Pan}, the special case of Theorem \ref{Quasiprimitive} when $G$ is primitive is successfully applied to solve problems regarding edge-transitive metacirculants.

\begin{theorem}\label{Quasiprimitive}
Let $G\leqslant\Sym(\Ome)$ be a finite quasiprimitive permutation group, $\omega\in\Ome$ and $R$ be a metacyclic transitive subgroup of $G$. Then one of the following holds.
\begin{itemize}
\item[(a)] $G$ is an affine primitive group of dimension at most $4$, and $(G,R,G_\omega)$ is as described in \emph{Theorem~\ref{HA}}.

\item[(b)] $G$ is an almost simple group, and $(G,R,G_\omega)$ is as described in \emph{Theorem~\ref{AS}} $($as $(G,A,B)$ there$)$.
% and $G=\Soc(G)B$.

\item[(c)] $G$ is primitive of holomorph simple or simple diagonal type, and $(G,R)$ is as described in \emph{Theorem~\ref{Diagonal}}.

\item[(d)] $G$ is of product action type, and $(G,R)$ is as described in \emph{Theorem~\ref{PA}}.
\end{itemize}
\end{theorem}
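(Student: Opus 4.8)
\textbf{Proof plan for Theorem~\ref{Quasiprimitive}.}

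The plan is to run through the eight O'Nan--Scott--Praeger types of the finite quasiprimitive group $G$ (see \cite[Section~5]{q-gps}) and, for each, to determine the metacyclic transitive subgroups $R$. Types \textup{HA} and \textup{AS} are precisely cases (a) and (b), and the complete lists of triples $(G,R,G_\omega)$ for these are assembled in Theorems~\ref{HA} and~\ref{AS}; for the remaining six types --- \textup{HS}, \textup{HC}, \textup{SD}, \textup{CD}, \textup{TW}, \textup{PA} --- the goal is to show that \textup{HS} and \textup{SD} produce exactly the configurations of Theorem~\ref{Diagonal}, that \textup{PA} produces those of Theorem~\ref{PA}, and that \textup{HC}, \textup{CD} and \textup{TW} cannot occur. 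Granting Theorems~\ref{HA}, \ref{AS}, \ref{Diagonal} and~\ref{PA}, the content of Theorem~\ref{Quasiprimitive} is then exactly this reduction, and in particular the elimination of the three compound and twisted types.

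The uniform input is a short list of facts about a metacyclic group $R$: it is $2$-generated, every subgroup and every quotient of it is metacyclic, an elementary abelian section of it has rank at most $2$, and --- consequently --- for a nonabelian simple group $T$ every metacyclic subgroup of $T^{k}$ has order at most $\exp(T)^{2}$, a bound that does not depend on $k$ (such a subgroup is generated by two elements, each of order dividing $\exp(T)$). Suppose now that $G$ has socle $N=T^{k}$ with $T$ nonabelian simple and $k\geqslant2$; this covers \textup{HS}, \textup{HC}, \textup{SD}, \textup{CD}, \textup{TW} and \textup{PA}. Since $N\trianglelefteq G$, the subgroup $M:=R\cap N$ is normal in $R$ and hence a metacyclic subgroup of $T^{k}$, so $|M|\leqslant\exp(T)^{2}$; also $R/M$ embeds in $G/N\leqslant\Out(T)\wr\Sym(k)$ and is metacyclic. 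Transitivity of $R$ forces $|\Ome|$ to divide $|M|\,|R/M|$. In type \textup{TW} the socle acts regularly, so $|\Ome|=|T|^{k}$ and the $M$-orbits form an $R$-invariant partition of $\Ome$ into $|T|^{k}/|M|$ parts that $R/M\leqslant\Sym(k)$ permutes transitively; hence $|T|^{k}\leqslant|M|\cdot k!\leqslant\exp(T)^{2}\,k!$, which is impossible. So \textup{TW} does not occur.

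For the compound types \textup{HC} and \textup{CD}, the point set is $\Delta^{m}$ with $m\geqslant2$, where $\Delta$ is the point set of a quasiprimitive group of type \textup{HS}, respectively \textup{SD}, with socle $T^{\ell}$, $\ell\geqslant2$ (so $k=\ell m$). After passing, if necessary, to a coarser product decomposition preserved by $R$, the standard analysis of transitive groups in product action lets us assume that $R$ induces a transitive metacyclic subgroup on the $m$ components and that its base part projects, in each component, onto a subgroup of the \textup{HS}- (resp. \textup{SD}-) group that is transitive on $\Delta$. Since the base part is a section of the $2$-generated group $R$, while each such projection has order at least $|\Delta|\geqslant|T|$ and meets the corresponding socle $T^{\ell}$ in a metacyclic group of order at most $\exp(T)^{2}$, a count over the $m\geqslant2$ components --- using $\ell\geqslant2$ --- reduces to finitely many possibilities, all of which are excluded by examining how such a metacyclic group can meet the diagonal socle while remaining transitive. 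Hence \textup{HC} and \textup{CD} are vacuous. For the surviving types \textup{HS} and \textup{SD}, where the divisibility $|\Ome|\di|M|\,|R/M|$ forces $k$ to be small, a direct examination of $M=R\cap N$, its normalizer in $N$, and the metacyclic subgroup of $\Out(T)\times\Sym(k)$ induced by $R$ yields the list of Theorem~\ref{Diagonal}. In the affine case, $G=V\rtimes G_\omega$ with $V=C_{p}^{d}$ regular and $G_\omega$ irreducible on $V$ (so $G$ is in fact primitive), the translation part $R\cap V$ is an elementary abelian section of $R$ and so has rank at most $2$; analysing the action of $R$ on $V$ modulo this part, with $|R|$ divisible by $p^{d}$, bounds $d$, the outcome $d\leqslant4$ and the explicit list being Theorem~\ref{HA}.

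It remains to treat type \textup{PA}, and here the argument rests on the almost simple case. Now $\Ome=\Delta^{m}$ with $m\geqslant2$, the component $\Delta$ being the point set of an almost simple primitive group with socle $T$, and $\Soc(G)=T^{m}$. As above, $R$ induces a transitive metacyclic subgroup $P\leqslant\Sym(m)$ on the $m$ components, and its base part projects, in each component, onto a metacyclic subgroup of the almost simple component group that is transitive on $\Delta$; but such a transitive metacyclic subgroup is, by definition, a metacyclic factor in a factorization of an almost simple group, so Theorem~\ref{AS} applies componentwise, and combining its conclusion with the (known) list of metacyclic transitive subgroups of $\Sym(m)$ yields Theorem~\ref{PA}. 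This makes plain where the real difficulty lies: the main obstacle is Theorem~\ref{AS} itself --- equivalently, the classification of all factorizations $G=AB$ of an almost simple group $G$ with $B$ metacyclic --- which requires working through the maximal factorizations of the almost simple groups and, for each, deciding whether a metacyclic factor can occur; this analysis, heaviest for the groups of Lie type, is the core of the paper. With Theorems~\ref{HA}, \ref{AS}, \ref{Diagonal} and~\ref{PA} in hand, Theorem~\ref{Quasiprimitive} follows from the type division above.
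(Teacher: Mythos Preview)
Your overall architecture matches the paper's exactly: split by O'Nan--Scott--Praeger type, with types \textup{HA}, \textup{AS}, \textup{HS}/\textup{SD}, \textup{PA} handled by the four auxiliary theorems, and types \textup{TW}, \textup{HC}, \textup{CD} to be eliminated. The substantive content of the theorem, given those four auxiliary results, is indeed just this elimination.

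However, your elimination of \textup{TW} does not work as written. You derive $|T|^{k}\leqslant\exp(T)^{2}\cdot k!$ and declare this impossible, but it is not: for any fixed simple $T$, Stirling's formula shows $k!$ eventually dominates $|T|^{k}$, so the inequality \emph{holds} for all sufficiently large $k$. (Concretely, for $T=\A_5$ it already holds once $k$ is around $150$.) The sentence ``the $M$-orbits form an $R$-invariant partition \dots\ that $R/M\leqslant\Sy_k$ permutes transitively'' also conflates two different actions: $R$ acts transitively on the $|T|^{k}/|M|$ blocks, while $R/M$ embeds in $G/N\leqslant\Sy_k$; these are not the same thing, and neither gives a usable bound beyond $|R/M|\leqslant k!$. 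One could try to repair this by bounding the order of a \emph{metacyclic} subgroup of $\Sy_k$ more sharply (two generators, each of order at most $\exp(\Sy_k)$), but $\exp(\Sy_k)$ is roughly $e^{k}$ and the resulting bound still depends on $|T|$, so you do not get a clean contradiction.

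The paper's route is different and deeper. For \textup{TW} and \textup{HC} it proves a single lemma (Lemma~\ref{NormalSemireg}): if $G$ has a \emph{semiregular} normal subgroup $N\cong T^{k}$, then $k=1$ and $T=\PSL_2(p)$. The proof restricts to one $N$-orbit $\Delta$; there $N^{\Delta}$ is regular, the induced group $N^{\Delta}R_{\Delta}^{\Delta}$ sits in $\Hol(T)\wr\Sy_k$, and crucially it is the product of two metacyclic groups (namely $R_{\Delta}^{\Delta}$ and the point stabilizer, which is isomorphic to a quotient of $R_{\Delta}^{\Delta}$). Kazarin's classification of simple groups expressible as such a product then pins $T$ down to a short explicit list, and each candidate is killed by comparing $p$-parts. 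Both \textup{TW} and \textup{HC} have such a semiregular $T^{k}$ with $k\geqslant2$, so both die at once. For \textup{CD}, the paper does not use your component-counting sketch either; instead Lemma~\ref{PA-type} shows that in the wreath product $H\wr\Sy_k$ one must have $k=2$ and $H$ almost simple (not of \textup{SD} type), the \textup{SD} possibility being excluded by a direct analysis of how the cyclic pieces of $R$ can meet $\PSL_2(p)^{2}$.

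So: keep the case split and the identification of Theorem~\ref{AS} as the main obstacle, but replace the counting arguments for \textup{TW}/\textup{HC}/\textup{CD} by the semiregular-normal-subgroup lemma (via Kazarin) and the wreath-product lemma.
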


%The proof of Theorem~\ref{Quasiprimitive} depends on Liebeck-Praeger-Saxl's classification of maximal factorizations of almost simple groups in \cite{BOOK}, and so depends on the classification of finite simple groups. It is also based on the O'Nan-Scott-Praeger theorem for quasiprimitive groups, proved in \cite{Praeger-qp}.

A {\it factorization} of a group $G$ is an expression of the group as a product of two proper subgroups which are called {\it factors} of $G$. It is easy to see that a permutation group $G$ contains a transitive subgroup $H$ if and only if $G=HG_\omega$, where $G_\omega$ is a point stabilizer.
Determining all the factorizations of almost simple groups with one factor metacyclic is a crucial step for the proof of Theorem~\ref{Quasiprimitive}. This will be done in Section 3
based on Liebeck-Praeger-Saxl's classification of maximal factorizations of almost simple groups in \cite{BOOK}.
Section 4 is devoted to affine groups, and in Section 5 we will determine the holomorph simple and simple diagonal types which are sometimes included together as the {\it diagonal type}. Then in the last section, the other O'Nan-Scott types will be dealt with and Theorems \ref{Quasiprimitive} will be proved.

\section{Preliminaries}\label{sec1}

In this section, we set up terminology and technical lemmas. All groups in this paper are supposed to be finite if there is no special instruction. Some of our notations will follow \cite{atlas} and \cite{BOOK}.

For a permutation group $G\leqslant\Sym(\Ome)$ and a subset $\Del\subseteq\Ome$, let $G_\Del$ be the setwise stabilizer of $\Del$.
If $H\leqslant G$ fixes $\Del$ setwise, then let $H^\Del$ be the permutation group induced by $H$ on $\Del$. In particular, $G_\Del^\Del$ denotes the permutation group induced by $G_\Del$ on $\Del$. Let $G_{(\Del)}$ be the pointwise stabilizer of $\Del$ in $G$. Then one immediately has $G_\Del^\Del\cong G_\Del/G_{(\Del)}$. For a $G$-invariant partition $\calB$ of $\Ome$, let $G^\calB$ be the permutation group induced by $G$ on $\calB$.
Recall that a subgroup $H\leqslant G$ is called {\it core-free} in $G$ if $H$ does not contain any nontrivial normal subgroup of $G$.

\begin{lemma}\label{NormalCyclic}
Let $G\leqslant\Sym(\Ome)$ be a transitive permutation group
and $H$ be a cyclic normal subgroup of $G$. Then $H$ is semiregular on $\Ome$.
\end{lemma}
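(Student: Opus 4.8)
The plan is to combine the transitivity of $G$ with the normality of $H$ and the key arithmetic feature of cyclic groups: a cyclic group possesses at most one subgroup of each order. Write $H_\omega=H\cap G_\omega$ for the stabiliser of $\omega$ in $H$. Since $H$ is semiregular on $\Ome$ precisely when $H_\omega=1$ for every $\omega\in\Ome$, it suffices to show that each such $H_\omega$ is trivial.

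First I would check that all the subgroups $H_\omega$ ($\omega\in\Ome$) are conjugate in $G$. Indeed, given $\omega,\omega'\in\Ome$, transitivity of $G$ yields $g\in G$ with $\omega g=\omega'$, so $G_{\omega'}=(G_\omega)^g$, and then, using $H^g=H$ (this is where normality of $H$ enters), $H_{\omega'}=H\cap(G_\omega)^g=(H\cap G_\omega)^g=(H_\omega)^g$. In particular $|H_{\omega'}|=|H_\omega|$ for all $\omega,\omega'\in\Ome$. As $H$ is cyclic it has a \emph{unique} subgroup of each order dividing $|H|$, and $H_\omega$ and $H_{\omega'}$ are subgroups of $H$ of equal order; hence $H_\omega=H_{\omega'}$. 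Thus there is a single subgroup $K\leqslant H$ with $H_\omega=K$ for every $\omega\in\Ome$.

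Finally, $K=\bigcap_{\omega\in\Ome}H_\omega$ fixes every point of $\Ome$, so $K$ lies in the kernel of the action of $G$ on $\Ome$; since $G\leqslant\Sym(\Ome)$ acts faithfully, this kernel is trivial, whence $K=1$. Therefore $H_\omega=1$ for all $\omega\in\Ome$, that is, $H$ is semiregular on $\Ome$. I do not anticipate a genuine obstacle here; the only point that needs care is the use of normality of $H$: transitivity alone would not return the conjugate $(H_\omega)^g$ into $H$, and it is exactly this containment that allows the uniqueness of subgroups of a given order in the cyclic group $H$ to force the point stabilisers to coincide.
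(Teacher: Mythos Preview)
Your proof is correct and follows essentially the same route as the paper. The paper phrases it more compactly by observing that every subgroup of a cyclic group is characteristic, so $H_\omega=H\cap G_\omega$ is normal in $G$ and hence trivial because $G_\omega$ is core-free; this packages your conjugacy-plus-uniqueness argument and your final faithfulness step into the single observation ``characteristic in $H$, hence normal in $G$, hence contained in the core of $G_\omega$''.
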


\begin{proof}
Note that $G_\omega$ is core-free in $G$ for all $\omega\in\Ome$. Since $H$ is cyclic, $H\cap G_\omega$ is a characteristic subgroup of $H$, and thus normal in $G$. Therefore, $H_\omega=H\cap G_\omega=1$, which means that $H$ is semiregular on $\Ome$.
\end{proof}

Let $H\leqslant\Sym(\Del)$ and $k$ be a positive integer. The wreath product $H\wr\Sy_k$ has an action on $\Del^k$ defined by
\[
(\delta_1,\dots,\delta_k)^{(h_1,\dots,h_k,\sigma^{-1})}
=((\delta_{1^\sigma})^{h_{1^\sigma}},\dots,(\delta_{k^\sigma})^{h_{k^\sigma}}),
\]
where $(\delta_1,\dots,\delta_k)\in\Del^k$, $(h_1,\dots,h_k)\in H^k$ and $\sigma\in\Sy_k$.
The permutation group $H\wr\Sy_k$ acting on $\Del^k$ as above is called the {\it primitive wreath product},
and the normal subgroup $H^k$ is called the {\it base group}.

\begin{lemma}\label{SemiregCyclic}
Let $G=\Sy_n\wr\Sy_k$ be a primitive wreath product with the base group $K=\Sy_n^k$.
If $H$ is a semiregular cyclic subgroup of $K$, then $|H|$ divides $n$.
\end{lemma}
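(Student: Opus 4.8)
The plan is to translate semiregularity into a statement about the cycle structures of the components of a generator of $H$, and then read off the divisibility from an elementary count. Write $\Sy_n=\Sym(\Del)$ with $|\Del|=n$, so that $K=\Sym(\Del)^k$ acts coordinatewise on $\Del^k$. Let $H=\langle h\rangle$ with $h=(h_1,\dots,h_k)\in K$ and put $m=|H|=\operatorname{lcm}(|h_1|,\dots,|h_k|)$; the goal is to show $m\mid n$. The stabilizer in $K$ of a point $(\delta_1,\dots,\delta_k)\in\Del^k$ is the product of the point stabilizers $\Sym(\Del)_{\delta_i}$, so $h^j$ fixes $(\delta_1,\dots,\delta_k)$ precisely when $h_i^j$ fixes $\delta_i$ for every $i$. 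Hence semiregularity of $H$ is equivalent to the following: for each $j$ with $1\le j<m$ there is some index $i$ (depending on $j$) for which $h_i^j$ has no fixed point on $\Del$, since otherwise one could choose a fixed point $\delta_i$ of $h_i^j$ in each coordinate and obtain a nontrivial element $h^j$ fixing $(\delta_1,\dots,\delta_k)$.

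The key step is to feed in the exponents $j=m/p$ for primes $p\mid m$. Fix such a prime $p$, let $p^a$ be the exact power of $p$ dividing $m$, and apply the equivalence above with $j=m/p$ (which lies in the range $1\le j<m$): there is an index $i$ with $h_i^{m/p}$ fixed-point-free on $\Del$. Recall that for a permutation $\sigma$ and an integer $j$, the power $\sigma^j$ fixes a point of a given $\sigma$-cycle of length $\ell$ if and only if $\ell\mid j$; thus $h_i^{m/p}$ being fixed-point-free means that no cycle length of $h_i$ divides $m/p$. But every cycle length of $h_i$ divides $|h_i|$ and hence divides $m$, and a divisor of $m$ fails to divide $m/p$ only if it is a multiple of $p^a$. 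Therefore every cycle length of $h_i$ is divisible by $p^a$, and since these cycle lengths sum to $n$ we conclude $p^a\mid n$.

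Running this argument over all primes $p$ dividing $m$ yields $p^a\mid n$ for each prime power $p^a$ exactly dividing $m$; as these prime powers are pairwise coprime, their product $m$ divides $n$, which completes the proof. I do not anticipate a genuine obstacle here: the only things to get right are the standard description of the cycle type of $\sigma^j$ in terms of that of $\sigma$ and the elementary observation that $j=m/p$ is exactly the exponent that forces a full $p$-part into each relevant cycle length.
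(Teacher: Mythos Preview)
Your proof is correct and rests on the same core idea as the paper's: for each prime $p$ dividing $|H|$, look at the power $h^{|H|/p}$ and use semiregularity to locate a coordinate $i$ in which $h_i^{|H|/p}$ is fixed-point-free, then extract the divisibility $p^a\mid n$ from that coordinate. The packaging differs slightly: the paper first replaces $H$ by one of its Sylow $p$-subgroups (so $|H|=p^\ell$), and then phrases the conclusion as ``some $\langle a_i\rangle$ of order $p^\ell$ is semiregular on $\Del$, hence $p^\ell\mid n$''; you instead keep the full order $m$ and argue directly via cycle lengths that every cycle of the relevant $h_i$ has length divisible by $p^a$. Your version avoids the preliminary Sylow reduction at the cost of a short cycle-structure computation; the paper's version is marginally more conceptual in that it produces an actual semiregular component. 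Either way the substance is the same.
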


\begin{proof}
Note that $H$ is the direct product of its Sylow subgroups. We only need to prove the lemma
for the case that $H=\langle a\rangle$ is a cyclic group of order $p^\ell$, where $p$ is a prime and $\ell$ is a positive integer.
Assume without loss of generality that $a=(a_1,\dots,a_k)\in K$ with $o(a_1)=\dots=o(a_j)=p^\ell>o(a_i)$ for $i=j+1,\dots,k$.

Suppose that none of $\langle a_1\rangle,\dots,\langle a_j\rangle$ is semiregular in $\Sy_n$.
Then for each $i\in\{1,\dots,j\}$, the element $a_i^{p^{\ell-1}}$  fixes some point $\delta_i$. Hence $(a_1^{p^{\ell-1}},\dots,a_j^{p^{\ell-1}})$ has a fixed point $(\delta_1,\dots,\delta_j)$. Since
\[
a^{p^{\ell-1}}=(a_1^{p^{\ell-1}},\dots,a_k^{p^{\ell-1}})=(a_1^{p^{\ell-1}},\dots,a_j^{p^{\ell-1}},1,\dots,1),
\]
we conclude that $a^{p^{\ell-1}}$ has a fixed point, contradicting the condition that $H$ is semiregular.

Therefore, at least one of $\langle a_1\rangle,\dots,\langle a_j\rangle$ is semiregular in $\Sy_n$, say $\langle a_1\rangle$. It follows that $|H|=p^\ell=o(a_1)$ divides $n$.
\end{proof}

\begin{lemma}\label{DirectProduct}
Let $p$ be a prime, and $G=H^k$ with $k\geqslant2$. Then each metacyclic $p$-subgroup of $G$ is isomorphic to a subgroup of $H^2$. As a consequence, each metacyclic subgroup of $G$ has order dividing $|H|^2$.
\end{lemma}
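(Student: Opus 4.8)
The plan is to prove the main assertion first — that a metacyclic $p$-subgroup $R$ of $G=H^k$ is isomorphic to a subgroup of $H^2$ — and then read off the order statement by passing to Sylow subgroups. Write $\pi_i\colon H^k\to H$ for the $i$-th coordinate projection and put $N_i=\ker(\pi_i|_R)\trianglelefteq R$. Since the inclusion $R\hookrightarrow H^k$ is the map whose components are the $\pi_i|_R$, we have $N_1\cap\cdots\cap N_k=1$. It thus suffices to produce indices $i\neq j$ with $N_i\cap N_j=1$: then $(\pi_i|_R,\pi_j|_R)\colon R\to H^2$ is injective, so $R$ embeds in $H^2$, which is the first assertion.

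Two structural facts drive the search for such $i,j$. First, every abelian subgroup of a metacyclic group is $2$-generated: if $N\trianglelefteq R$ is cyclic with $R/N$ cyclic and $A\leqslant R$ is abelian, then $A\cap N$ is cyclic and $A/(A\cap N)\hookrightarrow R/N$ is cyclic. Applied to the elementary abelian group $B:=\Omega_1(\mathrm{Z}(R))$, this gives $\dim_{\mathbb{F}_p}B\leqslant2$. Second, as $R$ is a $p$-group, every nontrivial normal subgroup of $R$ meets $\mathrm{Z}(R)$, and hence $B$, nontrivially; since $N_i\cap N_j\trianglelefteq R$, this reduces the task to finding $i\neq j$ with $(N_i\cap B)\cap(N_j\cap B)=1$.

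Set $W_i:=N_i\cap B$, subspaces of the $\mathbb{F}_p$-space $B$ with $W_1\cap\cdots\cap W_k=1$ and $\dim B\leqslant2$. If some $W_i=1$, then $N_i\cap N_j\cap B\leqslant W_i=1$ forces $N_i\cap N_j=1$ for every $j$, and we pick $j\neq i$ (possible since $k\geqslant2$). Otherwise each $W_i$ has dimension exactly $1$ — dimension $2$ would give $W_i=B$, and if every $W_i$ equalled $B$ then $\bigcap_iW_i=B\neq1$ — and the $W_i$ are not all equal, for otherwise their common intersection would be a nonzero line. Hence there are $i\neq j$ with $W_i$ and $W_j$ distinct lines of $B$, so $W_i\cap W_j=1$, as required; this completes the proof that $R$ embeds in $H^2$. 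For the consequence, let $R\leqslant G$ be an arbitrary metacyclic subgroup; every subgroup of $R$ is metacyclic (if $S\leqslant R$ then $S\cap N$ is cyclic and $S/(S\cap N)\hookrightarrow R/N$ is cyclic), so each Sylow $p$-subgroup $R_p$ of $R$ is a metacyclic $p$-subgroup of $G$, whence $|R_p|$ divides $|H|^2$ and therefore divides the $p$-part of $|H|^2$; running over all primes $p$ yields that $|R|$ divides $|H|^2$.

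The argument is elementary and I foresee no serious obstacle; the only delicate point is the reduction from $(N_i\cap B)\cap(N_j\cap B)=1$ to $N_i\cap N_j=1$, which rests on a nontrivial normal subgroup of a $p$-group meeting its centre. This is precisely what fails for a general metacyclic group, which is why the order statement for arbitrary metacyclic subgroups must be routed through Sylow subgroups rather than proved in one step.
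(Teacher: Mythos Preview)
Your argument is correct and reaches the same conclusion, but by a genuinely different route from the paper. The paper proceeds by induction on $k$: if $P\cap H_i\neq1$ for every $i$, then $P$ contains $(P\cap H_1)\times\cdots\times(P\cap H_k)\geqslant\Z_p^k$, which is impossible in a metacyclic group; hence some $P\cap H_i=1$, so $P\hookrightarrow G/H_i\cong H^{k-1}$, and induction finishes. Your proof instead locates in one stroke a pair of coordinates $i,j$ with $\ker(\pi_i|_R)\cap\ker(\pi_j|_R)=1$, by passing to the at-most-$2$-dimensional $\mathbb{F}_p$-space $B=\Omega_1(\mathrm{Z}(R))$ and exploiting that every nontrivial normal subgroup of a $p$-group meets $B$. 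The paper's argument is shorter and more elementary; yours avoids induction, gives the embedding into $H^2$ directly, and in fact identifies which two coordinates suffice. Your derivation of the order statement via Sylow subgroups matches what the paper intends.

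One expository wrinkle: the sentence ``otherwise each $W_i$ has dimension exactly $1$'' is not quite justified as written---ruling out the case that \emph{every} $W_i$ equals $B$ does not yet exclude a mixture in which some $W_i=B$ and others are lines. The repair is immediate and does not affect the outcome: since $\bigcap_iW_i=0$ inside the $2$-dimensional space $B$, not all $W_i$ contain a common line, so some $W_{i_0}$ is itself a line; then not all $W_j$ contain $W_{i_0}$, so some $W_{j_0}$ is a line distinct from $W_{i_0}$, whence $W_{i_0}\cap W_{j_0}=0$ and $N_{i_0}\cap N_{j_0}=1$ as you need.
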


\begin{proof}
The conclusion is trivial for $k=2$. Thus we assume $k\geqslant3$.
Let $G=H_1\times\dots\times H_k$ with $H_1\cong\dots\cong H_k\cong H$,
and let $P$ be a metacyclic $p$-subgroup of $G$.
If $P\cap H_i\neq1$ for each $i\in\{1,\dots,k\}$,
then $P\cap H_i\geqslant\Z_p$ and so
\[
P\geqslant(P\cap H_1)\times\dots\times(P\cap H_k)
\]
has an elementary abelian $p$-subgroup $\Z_p^k$, contradicting that $P$ is metacyclic.
Hence there exists $i\in\{1,2,\dots,k\}$ such that $P\cap H_i=1$. It follows that
\[P\cong(PH_i)/H_i\leqslant G/H_i\cong H^{k-1}.\]
Thereby we derive the conclusion of the lemma by induction.
\end{proof}

Note that the group of unitriangular matrices is a Sylow $p$-subgroup of $\GL_n(p^f)$. We have the following result on the largest order of $p$-elements in $\GL_n(p^f)$.
%, which is easy to know once we write the $p$-element in $\GL_n(p^f)$ as a unitriangular $n\times n$ matrix.

\begin{lemma}\label{Exponent}
Let $p$ be a prime. The largest order of $p$-elements in $\GL_n(p^f)$ equals the smallest $p$-power which is greater than or equal to $n$, that is, $p^{\lceil\log_pn\rceil}$.
\end{lemma}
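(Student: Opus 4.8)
The plan is to exploit the remark preceding the statement: since the group $U$ of upper unitriangular matrices is a Sylow $p$-subgroup of $\GL_n(p^f)$ and all Sylow $p$-subgroups are conjugate, every $p$-element of $\GL_n(p^f)$ lies in a conjugate of $U$ and hence is conjugate to an element of $U$. Consequently the largest order of a $p$-element of $\GL_n(p^f)$ equals the exponent of $U$, so it suffices to compute the latter.

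Next I would record the key computation. Write a typical element of $U$ as $u=I+N$, where $N$ is strictly upper triangular and therefore nilpotent with $N^n=0$. Since $I$ and $N$ commute and the matrix ring over $\GF(p^f)$ has characteristic $p$, the identity $(x+y)^p=x^p+y^p$ gives, by induction on $k$,
\[
u^{p^k}=(I+N)^{p^k}=I^{p^k}+N^{p^k}=I+N^{p^k}.
\]
Hence $u^{p^k}=I$ if and only if $N^{p^k}=0$, so the order of $u$ is exactly the smallest $p$-power that is $\geqslant$ the nilpotency index of $N$.

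It then remains to bound the nilpotency index of a strictly upper triangular $n\times n$ matrix. On the one hand $N^n=0$ always, so the order of every $u\in U$ divides the smallest $p$-power $\geqslant n$, which is $p^{\lceil\log_p n\rceil}$. On the other hand, taking $N$ to be the shift matrix (with $1$'s along the first superdiagonal and $0$'s elsewhere) one has $N^{n-1}\neq0$, so its nilpotency index is exactly $n$ and the corresponding $u=I+N$ has order exactly $p^{\lceil\log_p n\rceil}$. Combining the two bounds shows that the exponent of $U$, and hence the largest order of a $p$-element of $\GL_n(p^f)$, equals $p^{\lceil\log_p n\rceil}$.

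The only point that needs any care is the characteristic-$p$ identity $(I+N)^{p^k}=I+N^{p^k}$ (equivalently, that $\binom{p^k}{j}\equiv0\pmod p$ for $0<j<p^k$); everything else is elementary linear algebra, so I do not expect a genuine obstacle. One could alternatively avoid invoking Sylow's theorem by noting that a $p$-element $g$ satisfies $(g-I)^{p^m}=0$ for some $m$, hence $(g-I)^n=0$ by nilpotency, and then applying the same identity to $g$ directly; the argument above via the exponent of $U$ seems cleanest given the setup in the remark.
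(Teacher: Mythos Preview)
Your proof is correct. The paper does not actually prove this lemma; it states it as a known fact immediately after the remark that the unitriangular matrices form a Sylow $p$-subgroup of $\GL_n(p^f)$, and your argument is precisely the standard one that this remark invites, so you have filled in exactly the details the authors left implicit.
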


The next lemma is useful in the study of factorizations of almost simple groups.

\begin{lemma}\label{Order}
Let $A,B$ be subgroups of $G$ and $L$ be a normal subgroup of $G$. If $G=AB$, then $|A\cap L||B\cap L||G/L|$ is divisible by $|L|$.
\end{lemma}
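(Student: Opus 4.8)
The plan is to use the order formula $|G|=\frac{|A||B|}{|A\cap B|}$ together with the fact that $L\trianglelefteq G$. First I would pass to the quotient $G/L$: since $G=AB$, applying the canonical projection $\pi\colon G\to G/L$ gives $G/L=\pi(A)\pi(B)=(AL/L)(BL/L)$, so $|G/L|$ divides $|AL/L|\,|BL/L|=\frac{|A|}{|A\cap L|}\cdot\frac{|B|}{|B\cap L|}$ by the second isomorphism theorem. Rearranging, $|A\cap L|\,|B\cap L|\,|G/L|$ divides $|A|\,|B|$.

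Next I would bring in $|G=AB|$. From $|G|=\frac{|A||B|}{|A\cap B|}$ we get $|A||B|=|G|\,|A\cap B|=|L|\,|G/L|\,|A\cap B|$. Substituting into the divisibility relation from the previous step yields that $|A\cap L|\,|B\cap L|\,|G/L|$ divides $|L|\,|G/L|\,|A\cap B|$, hence $|A\cap L|\,|B\cap L|$ divides $|L|\,|A\cap B|$. This is already almost what we want; to finish, observe that $A\cap B\leqslant A$ and there is no containment forcing $|A\cap B|$ to divide into $|L|$, so I would instead argue in the opposite direction, showing $|L|$ divides $|A\cap L|\,|B\cap L|\,|G/L|$ directly rather than manipulating the factor $|A\cap B|$.

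The cleanest route, which I would actually write up, avoids $|A\cap B|$ altogether. Consider the set product $A\cap L$ times $B\cap L$ inside $L$; its size is $\frac{|A\cap L|\,|B\cap L|}{|A\cap L\cap B|}$, which divides $|A\cap L|\,|B\cap L|$ and also is at most $|L|$ — but more usefully, I claim $L\subseteq (A\cap L)(B\cap L)\cdot(\text{something})$ is not quite right either, so the genuinely correct statement to prove is: the index $|L:(A\cap L)(B\cap L)|$ divides $|G:AB|\cdot(\ldots)$. Let me restart the endgame correctly. From $G=AB$ we have $|G|=|A||B|/|A\cap B|$, so $|L|=\frac{|A||B|}{|A\cap B||G/L|}$. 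Now $|A\cap B|$ divides $|A\cap L|\,|B/(B\cap L)|$? No. The honest proof is: $|A\cap B|$ divides $|A|$ so write $|A|=|A\cap L|\cdot|A/(A\cap L)|=|A\cap L|\cdot|AL/L|$ and similarly for $B$; then
\[
|L|=\frac{|A\cap L|\,|AL/L|\,|B\cap L|\,|BL/L|}{|A\cap B|\,|G/L|}.
\]
Since $AL/L$ and $BL/L$ are subgroups of $G/L$, both $|AL/L|$ and $|BL/L|$ divide $|G/L|$; moreover $G/L=(AL/L)(BL/L)$ forces $|G/L|$ to divide $|AL/L|\,|BL/L|$, so $\frac{|AL/L|\,|BL/L|}{|G/L|}$ is a positive integer. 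Hence $|L|\,|A\cap B|=|A\cap L|\,|B\cap L|\cdot\frac{|AL/L|\,|BL/L|}{|G/L|}$ shows $|L|\,|A\cap B|$ divides $|A\cap L|\,|B\cap L|\,|A\cap B|$... which is circular.

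The actual obstacle, and the point I must get right, is choosing the correct auxiliary quantity. The correct computation: $|L|=|AL/L|^{-1}\cdot$ no — simply, $|AL|=|A||L|/|A\cap L|$ and $|AL|$ divides $|G|=|A||B|/|A\cap B|$, giving $|L|/|A\cap L|$ divides $|B|/|A\cap B|$, i.e.\ $|L|\,|A\cap B|$ divides $|A\cap L|\,|B|=|A\cap L|\,|B\cap L|\,|BL/L|$ and $|BL/L|$ divides $|G/L|$; therefore $|L|\,|A\cap B|$ divides $|A\cap L|\,|B\cap L|\,|G/L|$, and since $|A\cap B|\geqslant1$ we conclude $|L|$ divides $|A\cap L|\,|B\cap L|\,|G/L|$. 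This is the argument I would present; the only delicate point is the step $|AL|\mid|G|$, which holds because $AL$ is a subgroup of $G$ (as $L\trianglelefteq G$), together with the chain $|AL|=|A||L|/|A\cap L|$ dividing $|AB|=|A||B|/|A\cap B|$, and the standard fact $|BL/L|\mid|G/L|$.
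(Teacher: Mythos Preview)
Your final argument (the last paragraph) is correct: from $AL\leqslant G$ you get $|AL|=\dfrac{|A||L|}{|A\cap L|}$ dividing $|G|=\dfrac{|A||B|}{|A\cap B|}$, hence $|L|\,|A\cap B|$ divides $|A\cap L|\,|B|$; writing $|B|=|B\cap L|\,|BL/L|$ and using $|BL/L|\mid|G/L|$ then gives $|L|\,|A\cap B|\mid|A\cap L|\,|B\cap L|\,|G/L|$, which is even a touch stronger than the stated lemma.

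The paper's proof is shorter and more symmetric. It simply notes that $|G|\mid|A||B|$, that $|A|\mid|A\cap L|\,|G/L|$ (since $|A|/|A\cap L|=|AL/L|\mid|G/L|$) and likewise $|B|\mid|B\cap L|\,|G/L|$, so $|G|\mid|A\cap L|\,|B\cap L|\,|G/L|^2$; dividing through by $|G/L|$ gives the result. Your route is asymmetric in $A$ and $B$ and tracks the extra factor $|A\cap B|$, which buys you a marginally sharper conclusion at the cost of a less transparent write-up. The many false starts in your proposal should of course be excised; only the final paragraph is the proof.
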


\begin{proof}
It follows from the factorization $G=AB$ that $|G|$ divides $|A||B|$. Then since $|A|$ divides $|A\cap L||G/L|$ and $|B|$ divides $|B\cap L||G/L|$, we conclude that $|G|$ divides $|A\cap L||B\cap L||G/L|^2$. Consequently, $|A\cap L||B\cap L||G/L|$ is divisible by $|L|$.
\end{proof}

Let $a$ and $m$ be positive integers. A prime number $r$ is called a \emph{primitive prime divisor} of $a^m-1$ if $r$ divides $a^m-1$ but does not divide $a^\ell-1$ for any positive integer $\ell<m$.
If $r$ is a primitive prime divisor of $a^m-1$, then $m\di r-1$, and in particular $r>m$.
The following Zsigmondy's theorem is on the existence of primitive prime divisors.

\begin{theorem}
\emph{(Zsigmondy, see \cite[Theorem IX.8.3]{blackburn1982finite})} Let $a$ and $m$ be positive integers. Then $a^m-1$ has a primitive prime divisor except for $(a,m)=(2,6)$ or $(2^k-1,2)$ for some positive integer $k$.
\end{theorem}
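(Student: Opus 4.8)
The plan is to prove Zsigmondy's theorem via cyclotomic polynomials. Write $\Phi_n$ for the $n$-th cyclotomic polynomial, so that $a^n-1=\prod_{d\mid n}\Phi_d(a)$, with the M\"obius form $\Phi_n(a)=\prod_{d\mid n}(a^d-1)^{\mu(n/d)}$. We may assume $a\ge 2$ (the case $a=1$ being degenerate), and the cases $m=1$ and $m=2$ will be disposed of separately at the end, so the substance of the argument is the case $m\ge 3$.

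The first step is to pin down the prime divisors of $\Phi_n(a)$. Let $r$ be a prime with $r\mid\Phi_n(a)$ and let $e$ be the multiplicative order of $a$ modulo $r$ (note $r\nmid a$ and $e\mid n$). If $e=n$ then $r$ is a primitive prime divisor of $a^n-1$. Otherwise $e<n$; applying the lifting-the-exponent identity $v_r(a^N-1)=v_r(a^e-1)+v_r(N)$ (valid for odd $r$ and $e\mid N$) together with $v_r(a^N-1)=\sum_{d\mid N}v_r(\Phi_d(a))$ and M\"obius inversion over the multiples of $e$ dividing $N$, one obtains $v_r(\Phi_N(a))=1$ whenever $N=er^j$ with $j\ge1$, and $v_r(\Phi_N(a))=0$ for every other multiple $N>e$ of $e$. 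Hence $e<n$ forces $n=er^j$ for some $j\ge1$ with $v_r(\Phi_n(a))=1$; since $e\mid r-1$, every prime factor of $e$ is smaller than $r$, so $r$ is the largest prime divisor of $n$. The prime $r=2$ requires the special form of the lifting-the-exponent lemma at $2$, but leads to the same conclusion as long as $n\ge 3$, the only exceptional behaviour occurring at $n=2$. The upshot: if $m\ge 3$ and $a^m-1$ has no primitive prime divisor, then \emph{every} prime divisor of $\Phi_m(a)$ equals the largest prime divisor $P(m)$ of $m$ and occurs to the first power, whence $\Phi_m(a)\in\{1,P(m)\}$; as $\Phi_m(a)>1$ for $m\ge 2$, this gives $\Phi_m(a)=P(m)\le m$.

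The second step is a lower bound for $\Phi_n(a)$ contradicting the previous inequality once $n$ is large. Writing $\Phi_n(a)=\prod_{\zeta}(a-\zeta)$ over the primitive $n$-th roots of unity and pairing $\zeta$ with its conjugate yields $\Phi_n(a)\ge (a-1)^{\phi(n)}$ for $n\ge2$, which already handles $a\ge 3$: since $\phi(n)\ge\sqrt n$ for $n\notin\{2,6\}$ and $2^{\sqrt n}>n$ once $n$ is moderately large, no primitive prime divisor would force $2^{\phi(n)}\le(a-1)^{\phi(n)}\le n$, impossible beyond an explicit bound. For $a=2$ this estimate is vacuous and must be replaced by the sharper elementary bound $\Phi_n(a)\ge c\,2^{\phi(n)}$ for an absolute constant $c>0$ (obtainable, e.g., from the Ramanujan-sum expansion of $\log\Phi_n(a)$, or directly from the M\"obius product), which likewise outgrows $n$ when $\phi(n)$ is large. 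Either way, under the no-primitive-divisor hypothesis with $m\ge 3$ we conclude that $m$ is bounded above by an explicit constant.

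It then remains to inspect the finitely many small values $m\ge 3$ together with all $a\ge 2$. For each fixed $m$ the function $a\mapsto\Phi_m(a)$ is strictly increasing on $a\ge2$, so it suffices to verify $\Phi_m(2)>P(m)$; this holds in every case except $m=6$, where $\Phi_6(2)=2^2-2+1=3=P(6)$, giving the exception $(a,m)=(2,6)$ (and $\Phi_6(a)\ge 7>3$ for $a\ge3$). Finally, $m=1$ has a primitive prime divisor precisely when $a\ge 3$, and for $m=2$ one checks directly that $a^2-1$ has no primitive prime divisor exactly when $a-1$ and $a+1$ share all their odd prime factors, i.e. when $a+1$ is a power of $2$, that is $a=2^k-1$. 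I expect the main obstacle to be the $a=2$ lower bound: the values $\Phi_n(2)$ can be genuinely small (for instance $\Phi_6(2)=3$), so the naive estimate $(a-1)^{\phi(n)}$ degenerates and one must work harder to show that $\Phi_n(2)$ still grows exponentially in $\phi(n)$.
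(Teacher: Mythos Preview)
The paper does not give a proof of this result; it is simply quoted with a reference to \cite{blackburn1982finite}, so there is no ``paper's own proof'' to compare against.

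Your outline is the standard cyclotomic-polynomial route to Zsigmondy's theorem and is correct in its architecture: identify the possible prime divisors of $\Phi_m(a)$ via the order of $a$ modulo each such prime, conclude that the absence of a primitive prime divisor forces $\Phi_m(a)\le P(m)\le m$, and then contradict this with a lower bound on $\Phi_m(a)$ growing with $\phi(m)$. The one place where your sketch is genuinely incomplete is exactly where you flag it: the lower bound $\Phi_n(2)\ge c\cdot 2^{\phi(n)}$. This does follow from the M\"obius product you mention, since
\[
\log\Phi_n(2)=\phi(n)\log 2+\sum_{d\mid n}\mu(n/d)\log\bigl(1-2^{-d}\bigr),
\]
and the second sum is bounded in absolute value by the convergent series $\sum_{d\ge1}\bigl|\log(1-2^{-d})\bigr|$; so the obstacle you anticipate is real but entirely surmountable along the line you suggest. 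With that bound in hand and the finite check at the end (which your monotonicity remark correctly reduces to verifying $\Phi_m(2)>P(m)$ for the finitely many surviving $m$), the argument is complete. One small side remark: the statement as printed tacitly omits the degenerate case $(a,m)=(2,1)$, which you correctly pick up in your treatment of $m=1$.
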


\section{Almost simple type}

In this section, we classify factorizations of almost simple groups with a metacyclic factor. This extends the classification of factorizations of almost simple groups with a cyclic factor (given in \cite{Jones} and \cite{Abel-B-gps} independently) and those with a dihedral factor \cite{rotary}.

We first discuss factorizations of almost simple groups $G$ with socle $L=\PSL_2(q)$, and assume $q\geqslant7$ and $q\neq9$ as the case $q=4,5$ or $9$ is attributed to alternating groups.

\begin{theorem}\label{2-dimension}
Let $L=\PSL_2(q)\leqslant G\leqslant\PGaL_2(q)$, $7\leqslant q=p^f\ne 9$ for a prime number $p$,  and $G=AB$ for
core-free subgroups $A$ and $B$ of $G$. Then interchanging $A$ and $B$ if necessary, exactly one of the following holds,
where $d=\gcd(2,q-1)$.
\begin{itemize}
\item[(a)] $A\cap L\leqslant\D_{2(q+1)/d}$ and $B\cap L\leqslant\Z_p^f{:}\Z_{(q-1)/d}$.
\item[(b)] $(G,A,B)$ lies in \emph{Table} $\ref{tab1}$.
\end{itemize}
\end{theorem}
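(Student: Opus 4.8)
The plan is to start from Dickson's classification of subgroups of $\PSL_2(q)$ (together with its extension to the subgroups of $\PGaL_2(q)$), and to analyse the factorization $G=AB$ modulo the socle $L$. Since $A$ and $B$ are core-free, $A\cap L$ and $B\cap L$ are proper subgroups of $L$, each lying in one of the standard families: cyclic groups $\Z_{(q\pm1)/d}$, dihedral groups $\D_{2(q\pm1)/d}$, the Borel subgroup $\Z_p^f{:}\Z_{(q-1)/d}$, the subfield/subgroup cases $\PSL_2(q_0)$ or $\PGL_2(q_0)$ with $q=q_0^r$, and the three exceptional groups $\A_4,\Sy_4,\A_5$. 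First I would reduce to a statement about $L=\PSL_2(q)$ itself: using Lemma~\ref{Order} with the normal subgroup $L\trianglelefteq G$, the factorization $G=AB$ forces $|G/L|\cdot|A\cap L|\cdot|B\cap L|$ to be divisible by $|L|$, and more usefully $L=(A\cap L)(B\cap L)$ up to a bounded index; combined with $|G/L|\di 2f$ this pins down which pairs of subgroup families can possibly multiply up to (almost) all of $L$.

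The core of the argument is an order/divisibility sieve. Writing $|L|=q(q-1)(q+1)/d$, I would go through the (unordered) pairs of subgroup types for $A\cap L$ and $B\cap L$ and ask when the product of their orders can be divisible by $|L|/|G/L|$. The prime $p$ divides $|L|$ to the first power, so exactly one of the two factors must contain the unipotent part, i.e. must lie in a Borel subgroup $\Z_p^f{:}\Z_{(q-1)/d}$ (or be one of the small exceptional groups, which only happens for tiny $q$ handled separately, or a subfield subgroup, handled via Zsigmondy). Say $B\cap L\le\Z_p^f{:}\Z_{(q-1)/d}$. Then the complementary factor $A\cap L$ must supply a primitive prime divisor of $q^2-1$ — equivalently of $q+1$ — so by Zsigmondy (and the usual short list of exceptions $(p^f,\cdot)=(2,6),(2^k-1,2)$, plus $q=7,8,9$ done by hand) $A\cap L$ must have order divisible by such a primitive prime $r\mid q+1$; the only subgroups of $\PSL_2(q)$ whose order is divisible by a primitive prime divisor of $q^2-1$ are the ones contained in $\D_{2(q+1)/d}$ (or again the subfield/exceptional cases). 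This yields conclusion (a). Finally I would run the same analysis for the extra primes contributed by field automorphisms in $G/L$: when $|G/L|$ is genuinely larger than $d'$ one can occasionally relax "contained in a Borel" to slightly bigger groups, or one can have both factors meeting $L$ in a subfield subgroup $\PSL_2(q_0)$ or $\PGL_2(q_0)$; cross-referencing with the Liebeck–Praeger–Saxl tables of maximal factorizations and checking metacyclicity by hand gives the finite list of exceptional triples $(G,A,B)$ recorded in Table~\ref{tab1}.

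The main obstacle will be the borderline and small-field cases: the Zsigmondy exceptions and the values $q=7,8,16,\dots$ where primitive prime divisors behave irregularly, together with the genuinely exceptional factorizations coming from outer field automorphisms (where $A$ or $B$ may be a $\PGL_2(q_0)$- or $\PSL_2(q_0)$-type subgroup extended by a graph–field automorphism). For these I would not argue uniformly but rather enumerate the maximal factorizations of $G$ from \cite{BOOK}, refine each maximal factor to its core-free subgroups, and check directly which refinements keep one factor metacyclic — the bookkeeping here is finite but delicate, and care is needed to state the outcome correctly "up to interchanging $A$ and $B$" and in terms of $A\cap L$, $B\cap L$ rather than $A$, $B$ themselves, since the overgroup structure in $G/L$ contributes the remaining freedom.
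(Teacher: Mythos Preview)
Your overall strategy---Dickson's subgroup list, the order lemma applied with $L\trianglelefteq G$, a Zsigmondy argument on a primitive prime divisor of $q^2-1$, and a finite case-check for small $q$---is exactly the route the paper takes. But there are a few slips in the execution that you should repair.

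First, the sentence ``the prime $p$ divides $|L|$ to the first power'' is wrong: $|L|_p=p^f$, not $p$. What actually forces one factor into the Borel is that (for $q\neq16$) the only subgroups in Dickson's list with order divisible by $p^f$ (up to the slack $|G/L|\leqslant df$) are contained in $\Z_p^f{:}\Z_{(q-1)/d}$; subfield subgroups $\PSL_2(p^m)$ with $m<f$ only carry $p^m$, which is too small once you compare against $p^f/f$. Second, Theorem~3.1 carries \emph{no} metacyclicity hypothesis---it classifies all factorizations $G=AB$ with both factors core-free---so ``checking metacyclicity by hand'' is not part of this proof and would give the wrong table. Third, there are no graph automorphisms for $\PSL_2$, so the ``graph--field'' remark should be dropped; the only outer automorphisms are diagonal and field ones.

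Finally, the paper does not invoke \cite{BOOK} here at all. Once Zsigmondy places one factor inside $\D_{2(q+1)/d}$ and the other inside the Borel (case~(a)), the remaining situation is that one of $A\cap L$, $B\cap L$ is contained in $\A_4$, $\Sy_4$ or $\A_5$; the divisibility condition $(q+1)\mid 2f\cdot120$ together with a short Zsigmondy argument bounds $f\leqslant2$ and leaves only $q\in\{7,11,16,19,23,29,47,59\}$, each of which is handled directly. You can certainly consult the Liebeck--Praeger--Saxl tables as a sanity check, but the self-contained argument from Dickson is both cleaner and what the paper actually does.
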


\begin{table}[htbp]
\caption{}\label{tab1}
\centering
\begin{tabular}{|l|l|l|l|}
\hline
row & $G$ & $A$ & $B$\\
\hline
1 & $\PSL_2(7)$ & $\Z_7$, $\Z_7{:}\Z_3$ & $\Sy_4$\\
\hline
2 & $\PGL_2(7)$ & $\Z_7{:}\Z_2$, $\Z_7{:}\Z_6$ & $\Sy_4$\\
\hline
3 & $\PSL_2(11)$ & $\Z_{11}{:}\Z_5$ & $\A_4$\\
\hline
4 & $\PSL_2(11)$ & $\Z_{11}$, $\Z_{11}{:}\Z_5$ & $\A_5$\\
\hline
5 & $\PGL_2(11)$ & $\Z_{11}{:}\Z_{10}$ & $\A_4$\\
\hline
6 & $\PGL_2(11)$ & $\Z_{11}{:}\Z_5$, $\Z_{11}{:}\Z_{10}$ & $\Sy_4$\\
\hline
7 & $\PGL_2(11)$ & $\Z_{11}{:}\Z_2$, $\Z_{11}{:}\Z_{10}$ & $\A_5$\\
\hline
8 & $\PGaL_2(16)$ & $\Z_{17}{:}\Z_8$ & $(\A_5\times\Z_2).\Z_2$\\
\hline
9 & $\PSL_2(19)$ & $\Z_{19}{:}\Z_9$ & $\A_5$\\
\hline
10 & $\PGL_2(19)$ & $\Z_{19}{:}\Z_{18}$ & $\A_5$\\
\hline
11 & $\PSL_2(23)$ & $\Z_{23}{:}\Z_{11}$ & $\Sy_4$\\
\hline
12 & $\PGL_2(23)$ & $\Z_{23}{:}\Z_{22}$ & $\Sy_4$\\
\hline
13 & $\PSL_2(29)$ & $\Z_{29}{:}\Z_7$, $\Z_{29}{:}\Z_{14}$ & $\A_5$\\
\hline
14 & $\PGL_2(29)$ & $\Z_{29}{:}\Z_{28}$ & $\A_5$\\
\hline
15 & $\PSL_2(59)$ & $\Z_{59}{:}\Z_{29}$ & $\A_5$\\
\hline
16 & $\PGL_2(59)$ & $\Z_{59}{:}\Z_{58}$ & $\A_5$\\
\hline
\end{tabular}
\end{table}

\begin{proof}
By Lemma \ref{Order} we know that
\begin{equation}\label{eq4}
d^2f|A\cap L||B\cap L|\text{ is divisible by }q(q^2-1).
\end{equation}
By a result of Dickson \cite{huppert1967endliche}, every subgroup of $L$ is contained in one of the following groups:
\begin{eqnarray}
&&\Z_p^f{:}\Z_{(q-1)/d},\ \D_{2(q+1)/d},\ \D_{2(q-1)/d},\ \PSL_2(p^m)\mbox{ with }m\di f,\ \PGL_2(p^m)\mbox{ with }2m\di f,\label{class1}\\
&&\A_4\mbox{ with }q\neq2^{2k+1},\ \Sy_4\mbox{ with }q^2\equiv1\pmod{16},\ \A_5\mbox{ with }q^2\not\equiv4\pmod{5}.\label{class2}
\end{eqnarray}
In view of (\ref{eq4}) it is easy to see that $A\cap L$ and $B\cap L$ cannot be contained in groups in (\ref{class2}) simultaneously.

If $\PSL_2(16)\leqslant G\leqslant\PSL_2(16){:}\Z_2$, then Lemma \ref{Order} implies that $A\cap L\leqslant\Z_2^4{:}\Z_{15}$ and $B\cap L\leqslant\D_{34}$. If $G=\PGaL_2(16)$, then Lemma \ref{Order} implies that either $A\cap L\leqslant\Z_2^4{:}\Z_{15}$ and $B\cap L\leqslant\Z_{17}{:}\Z_2$, or $A=(\A_5\times\Z_2).\Z_2$ and $B=\Z_{17}{:}\Z_8$. Thus the theorem holds for $q=16$, and we assume $q\neq16$ in the rest of the proof.

Suppose that both $A\cap L$ and $B\cap L$ are contained in groups in (\ref{class1}). If $p^{2f}-1$ has a primitive prime divisor $r$, then $r>2f$ and the only group in (\ref{class1}) with order divisible by $r$ is $\D_{2(q+1)/d}$, whence we may let $A\cap L\leqslant\D_{2(q+1)/d}$. Further viewing that $p^f$ divides $f|B\cap L|$, we get $B\cap L\leqslant\Z_p^f{:}\Z_{(q-1)/d}$ as $q\neq16$. If $p^{2f}-1$ does not have a primitive prime divisor, then $f=1$ or $(p,f)=(2,3)$ by Zsigmondy's theorem, in which case we still have $A\cap L\leqslant\D_{2(q+1)/d}$ and $B\cap L\leqslant\Z_p^f{:}\Z_{(q-1)/d}$ as a consequence of (\ref{eq4}).

Next suppose that $A\cap L$ is contained in a group in (\ref{class1}) and $B\cap L$ is contained in a group in (\ref{class2}). According to (\ref{eq4}) we have $A\cap L\leqslant\Z_p^f{:}\Z_{(q-1)/d}$ as $q\neq16$. Now (\ref{eq4}) implies that $df|B\cap L|$ is divisible by $q+1$. In particular, $(q+1)\di2f\cdot120$. If $f\geqslant3$, then either $p^{2f}-1$ has a primitive prime divisor $r$ or $(p,f)=(2,3)$. Since $(q+1)\di240f$, the latter cannot occur. For the former, $r$ divides $q+1$ but does not divide $240f$ as $r>2f\geqslant6$, a contradiction. Thereby $f=1$ or $2$, and then we conclude that one of the following occurs: $q=7$ and $B\cap L\leqslant\Sy_4$; $q=11$ and $B\cap L=\A_4$ or $\A_5$; $q=19$ and $B\cap L\leqslant\A_5$; $q=23$ and $B\cap L\leqslant\Sy_4$; $q=29$ and $B\cap L\leqslant\A_5$; $G=\PGL_2(47)$ and $B\cap L\leqslant\Sy_4$; $q=59$ and $B\cap L\leqslant\A_5$.

(i). $q=7$ and $B\cap L\leqslant\Sy_4$. If $G=\PSL_2(7)$, then $\Z_7\leqslant A\leqslant\Z_7{:}\Z_3$ and $B=\Sy_4$, as in row 1 of Table \ref{tab1}. If $B=\PGL_2(7)$, then $\Z_7{:}\Z_2\leqslant A\leqslant\Z_7{:}\Z_6$ and $B=\Sy_4$ as in row 2 of Table \ref{tab1}.

(ii). $q=11$ and $B\cap L=\A_4$ or $\A_5$. If $G=\PSL_2(11)$, then $A=\Z_{11}{:}\Z_5$ and $B=\A_4$ or $\Z_{11}\leqslant A\leqslant\Z_{11}{:}\Z_5$ and $B=\A_5$, as in row 3 or row 4 of Table \ref{tab1}, respectively. If $G=\PGL_2(11)$, then $B=\A_4$, $\Sy_4$ or $\A_5$ and an easy case by case analysis leads to rows 5--7 of Table \ref{tab1}.

(iii). $q=19$ and $B\cap L\leqslant\A_5$. If $G=\PSL_2(19)$, then $A=\Z_{19}{:}\Z_9$ and $B=\A_5$, as in row 9 of Table \ref{tab1}. If $G=\PGL_2(19)$, then $A=\Z_{19}{:}\Z_{18}$ and $B=\A_5$, as in row 10 of Table \ref{tab1}.

(iv). $q=23$ and $B\cap L\leqslant\Sy_4$. If $G=\PSL_2(23)$, then $A=\Z_{23}{:}\Z_{11}$ and $B=\Sy_4$, as in row 11 of Table \ref{tab1}. If $G=\PGL_2(23)$, then $A=\Z_{23}{:}\Z_{22}$ and $B=\Sy_4$, as in row 12 of Table \ref{tab1}.

(v). $q=29$ and $B\cap L\leqslant\A_5$. If $G=\PSL_2(29)$, then $\Z_{29}{:}\Z_7\leqslant A\leqslant\Z_{29}{:}\Z_{14}$ and $B=\A_5$, as in row 13 of Table \ref{tab1}. If $G=\PGL_2(29)$, then $A=\Z_{29}{:}\Z_{28}$ and $B=\A_5$, as in row 14 of Table \ref{tab1}.

(vi). $q=47$, $G=\PGL_2(47)$ and $B\cap L\leqslant\Sy_4$. In this case, $A\leqslant\Z_{47}{:}\Z_{46}$ and $B\leqslant\Sy_4$, and so $|A||B|$ divides $47\cdot46\cdot24=|G|/2$, a contradiction.

(vii). $q=59$ and $B\cap L\leqslant\A_5$. If $G=\PSL_2(59)$, then $A=\Z_{59}{:}\Z_{29}$ and $B=\A_5$, as in row 15 of Table \ref{tab1}. If $G=\PGL_2(59)$, then $A=\Z_{59}{:}\Z_{58}$ and $B=\A_5$, as in row 16 of Table \ref{tab1}.
\end{proof}

We introduce some notation for the following result. Let $T$ be a classical linear group on $V$ with center $Z$ such that $T/Z$ is a classical simple group, and $X$ be a subgroup of $\GL(V)$ containing $T$ as a normal subgroup. Then for any subgroup $Y$ of $X$, denote by $\hat{\,}Y$ the subgroup $(Y\cap T)Z/Z$ of $T/Z$. For the definition of the subgroups $\Pa_i$ and $\N_i$ of classical groups, see \cite[2.2.4]{BOOK}. If $q$ is a prime power, then we denote the elementary abelian group of order $q$ simply by $q$ when there is no confusion.

\begin{lemma}\label{LowerLinear}
Let $G$ be an almost simple group with socle $L=\PSL_n(q)$. Suppose $G=AB$ for subgroups $A,B$ of $G$ such that $A\cap L\leqslant\hat{\,}\GL_1(q^n){:}n$ and $B\cap L\leqslant\Pa_1$ or $\Pa_{n-1}$. If $A$ is metacyclic and $(n,q)\neq(2,4)$, $(3,2)$ or $(4,2)$, then one of the following holds.
\begin{itemize}
\item[(a)] $q^{n-1}{:}\SL_{n-1}(q)\trianglelefteq B\cap L$.
\item[(b)] $(G,A,B)$ lies in \emph{Table} $\ref{tab2}$.
\end{itemize}
\end{lemma}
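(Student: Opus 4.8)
The plan is to work with the factorization $G=AB$ under the hypothesis $A\cap L\leqslant\widehat{\,}\GL_1(q^n){:}n$ and $B\cap L\leqslant\Pa_1$ (the case $\Pa_{n-1}$ being handled by the graph automorphism symmetry). First I would apply Lemma~\ref{Order} with the normal subgroup $L$ to get that $|A\cap L||B\cap L||G/L|$ is divisible by $|L|$, and then translate this into a concrete divisibility constraint. Writing $|A\cap L|$ as a divisor of $n(q^n-1)/(d(q-1))$ where $d=\gcd(n,q-1)$, and using that $B\cap L\leqslant\Pa_1$ has order dividing $|\Pa_1\cap L|=q^{n-1}|\GL_{n-1}(q)|/((q-1)d)$ times small factors, I would extract the requirement that $|B\cap L|$ be divisible by the part of $|L|$ not covered by $|A\cap L||G/L|$; in particular the index $|\Pa_1:B\cap L\cap\Pa_1|$ must divide $n(q^n-1)/(q-1)$ up to the order of outer automorphisms, which is $O(f\cdot\gcd(n,q-1))$.

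The core of the argument is a primitive prime divisor analysis on the Levi complement $\SL_{n-1}(q)$ inside $\Pa_1$. The group $q^{n-1}{:}\SL_{n-1}(q)$ is the derived subgroup of $\Pa_1\cap L$ (roughly), so if conclusion~(a) fails then $B\cap L$ fails to contain $q^{n-1}{:}\SL_{n-1}(q)$, which forces $B\cap L$ to miss one of the prime-power factors of $|\SL_{n-1}(q)|$ or of the unipotent radical $q^{n-1}$. I would invoke Zsigmondy's theorem to produce, for each $i$ with $2\leqslant i\leqslant n-1$, a primitive prime divisor $r_i$ of $q^i-1$ (with the usual Zsigmondy exceptions flagged), note that $r_i>i$ and that $r_i$ does not divide $n(q^n-1)/(q-1)$ unless $i$ is closely tied to $n$ — the only $q^i-1$ dividing $q^n-1$ with $i<n$ are those with $i\mid n$, and even then the primitivity of $r_i$ means $r_i\nmid q^n-1$ once $i\nmid n$. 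So for the factorization to balance, $B\cap L$ must pick up all these primitive primes, i.e. $|B\cap L|$ is divisible by $\prod r_i$; combined with divisibility by $q^{n-1}$ (forced similarly by comparing $p$-parts via Lemma~\ref{Exponent}, since a metacyclic subgroup of $\widehat{\,}\GL_1(q^n){:}n$ has very restricted $p$-part), this pins $B\cap L$ down to a subgroup containing $q^{n-1}{:}\SL_{n-1}(q)$, giving~(a). The residual cases are exactly those small $(n,q)$ where Zsigmondy fails or where $n(q^n-1)/(q-1)$ accidentally absorbs an $r_i$; these I would enumerate by hand, cross-referencing the maximal factorizations in \cite{BOOK}, and verify that after excluding $(n,q)=(2,4),(3,2),(4,2)$ the surviving triples $(G,A,B)$ are precisely those recorded in Table~\ref{tab2}, using the metacyclicity of $A$ to discard spurious solutions.

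The main obstacle I anticipate is not the generic Zsigmondy argument but the bookkeeping in the boundary cases: when $n$ is composite several divisors $i\mid n$ with $i<n$ survive, and the interplay between $d=\gcd(n,q-1)$, the factor of $n$ in $\widehat{\,}\GL_1(q^n){:}n$, and the Zsigmondy exceptions $(q,i)=(2,6)$ and $q^i-1$ a power of $2$ creates a moderate-sized but irritating list of special $(n,q)$ to check individually. A secondary subtlety is making the reduction "$B\cap L$ misses $q^{n-1}{:}\SL_{n-1}(q)$ $\Rightarrow$ $B\cap L$ omits a whole prime-power factor" precise: a maximal subgroup of $\Pa_1\cap L$ need not omit a prime, so I would instead argue at the level of maximal subgroups of $\SL_{n-1}(q)$ — Aschbacher's theorem or Dickson's list when $n-1=2$ — to show that any proper subgroup of $\SL_{n-1}(q)$ whose order is divisible by all the $r_i$ and by the relevant $p$-power must be the whole group, which is where most of the real work sits. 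Finally, throughout I would keep using that $A$ is metacyclic to bound $|A\cap L|$ and its $p$-part tightly, since $\widehat{\,}\GL_1(q^n){:}n$ is already metacyclic-ish and the metacyclic hypothesis mainly serves to control the field-automorphism part and eliminate the handful of non-metacyclic near-misses.
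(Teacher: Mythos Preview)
Your outline is plausible but takes a genuinely different route from the paper, and the route you chose is harder than necessary for $n\geqslant3$.

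For $n=2$ the paper does essentially what you sketch: apply Lemma~\ref{Order}, and for $f\geqslant3$ use a primitive prime divisor $r$ of $p^f-1$ to force $\Z_p^f{:}\Z_r\leqslant B\cap L$ (since $\Z_r$ has no faithful $\GF(p)$-representation of dimension $<f$), handling the Zsigmondy exception $(p,f)=(2,6)$ separately. So far you are aligned with the paper.

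For $n\geqslant3$, however, the paper bypasses your Zsigmondy/Aschbacher programme entirely. From $G=AB$ and $|A|\mid 2nf\cdot|L|/|M|$ (with $M=\Pa_1$) one gets that $|M|/|B\cap L|$ divides $2nf$. Passing to $\overline{M}=M/\Rad(M)$, an almost simple group with socle $\PSL_{n-1}(q)$, the image $\overline{K}$ of $B\cap L$ has index dividing $2nf$ in $\overline{M}$. Now Cooperstein's bound on the minimal index of a proper subgroup of $\PSL_{n-1}(q)$ exceeds $2nf$ once the small $(n,q)$ in the hypothesis and the short Magma list $(n,q)\in\{(3,3),(3,4),(3,5),(3,8),(3,9)\}$ are set aside, so $\overline{K}\geqslant\Soc(\overline{M})$ and hence $B\cap L\geqslant\SL_{n-1}(q)$. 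Irreducibility of $\SL_{n-1}(q)$ on the unipotent radical then forces $q^{n-1}{:}\SL_{n-1}(q)\leqslant B\cap L$ (the alternative $B\cap L\leqslant\hat{\,}\GL_{n-1}(q)$ is killed by a $p$-part count). This is a one-paragraph argument.

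By contrast, the step you flag as ``where most of the real work sits'' --- showing that a proper subgroup of $\SL_{n-1}(q)$ divisible by all the relevant $r_i$ and by the right $p$-power is the whole group --- is a genuine classification-type statement that would require either Aschbacher's theorem plus case analysis or an appeal to results of comparable depth to Cooperstein's. Your approach can be made to work, but it reconstructs by hand what the index bound gives for free; the paper's method also makes the residual small-$(n,q)$ list short and uniform, whereas your Zsigmondy bookkeeping (divisors $i\mid n$, the $(2,6)$ exception, Mersenne-type failures) would proliferate special cases.
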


\begin{table}[htbp]
\caption{}\label{tab2}
\centering
\begin{tabular}{|l|l|l|l|}
\hline
row & $G$ & $A$ & $B$\\
\hline
1 & $\PSL_3(3)$ & $\Z_{13}{:}\Z_3$ & $\AGammaL_1(9)$ \\
\hline
2 & $\PSL_3(3).\Z_2$ & $\Z_{13}{:}\Z_6$ & $\AGammaL_1(9)$ \\
\hline
3 & $\PSL_3(4).\Sy_3$ & $(\Z_7{:}\Z_3)\times\Sy_3$ & $\Z_2^4{:}(\Z_3\times\D_{10}).\Z_2$ \\
\hline
4 & $\PSL_3(8).\Z_3$ & $\Z_{73}{:}\Z_9$ & $(\Z_2^3.\Z_2^6){:}\Z_7^2{:}\Z_3$ \\
\hline
5 & $\PSL_3(8).\Z_6$ & $\Z_{73}{:}\Z_9$ & $(\Z_2^3.\Z_2^6){:}\Z_7^2{:}\Z_6$ \\
\hline
6 & $\PSL_3(8).\Z_6$ & $\Z_{73}{:}\Z_{18}$ & $(\Z_2^3.\Z_2^6){:}\Z_7^2{:}\Z_3$ \\
\hline
7 & $\PSL_3(8).\Z_6$ & $\Z_{73}{:}\Z_{18}$ & $(\Z_2^3.\Z_2^6){:}\Z_7^2{:}\Z_6$ \\
\hline
\end{tabular}
\end{table}

\begin{proof}
Let $q=p^f$ with prime number $p$. Note that if $n=2$ then part~(a) turns out to be $\Z_p^f\trianglelefteq B\cap L$. For $(n,q)\in\{(3,3),(3,4),(3,5),(3,8),(3,9)\}$ computation in \magma~\cite{magma} directly verifies the lemma. Thus we assume $(n,q)\neq(3,3)$, $(3,4)$, $(3,5)$, $(3,8)$ or $(3,9)$ in the following.

\textbf{Case 1.} Assume $n=2$. Then $q\geqslant5$ and $B\cap L\leqslant\Z_p^f{:}\Z_{(q-1)/\gcd(2,q-1)}$.

First suppose $f\leqslant2$. Then $p>2$ as $q\geqslant5$. We derive from Lemma \ref{Order} that $q$ divides $|B\cap L|$. Hence $q{:}\SL_1(q)=\Z_p^f\trianglelefteq B\cap L$ as in part~(a).

Next suppose $f\geqslant3$. By Zsigmondy's theorem, $p^f-1$ has a primitive prime divisor $r$ except $(p,f)=(2,6)$. If $(p,f)=(2,6)$, then $2^4\cdot3\cdot7$ divides $|B\cap L|$ by Lemma \ref{Order}, but the only subgroups of $\Z_2^6{:}\Z_{63}$ in $\PSL_2(64)$ with order divisible by $2^4\cdot3\cdot7$ are $\Z_2^6{:}\Z_{21}$ and $\Z_2^6{:}\Z_{63}$, which leads to $\Z_2^6\trianglelefteq B\cap L$. If $(p,f)\neq(2,6)$, then Lemma \ref{Order} implies that $B\cap L$ has order divisible by $r$ and thus has an element of order $r$. Note that $\Z_r$ does not have any faithful representation over $\GF(p)$ of dimension less than $f$. We then have $\Z_p^f{:}\Z_r\trianglelefteq B\cap L$ and so part~(a) holds.

\textbf{Case 2.} Assume $n\geqslant3$ and assume without loss of generality that $B\cap L\leqslant\Pa_1$. Let $M=\Pa_1$ be a maximal subgroup of $L$ containing $B$. From $G=AB$ we derive that $|G|$ divides $|A||B|$ and thus $|L|$ divides $|A||B\cap L|$. Then as $|A|$ divides $2nf|L|/|M|$, it follows that $|M|/|B\cap L|$ divides $2nf$. Note that $M$ has a unique unsolvable composition factor $\PSL_{n-1}(q)$, and $M^{(\infty)}=q^{n-1}{:}\SL_{n-1}(q)$. Let $R=\Rad(M)$, $\overline{M}=M/R$ and $\overline{K}=(B\cap L)R/R$. Then $\overline{M}$ is an almost simple group with socle $\PSL_{n-1}(q)$ and  $|\overline{M}|/|\overline{K}|$ divides $|M|/|B\cap L|$ and thus divides $2nf$. Moreover, we conclude from~\cite{cooperstein1978minimal} that each proper subgroup of $\Soc(\overline{M})=\PSL_{n-1}(q)$ has index greater than $2nf$. If $\overline{K}$ is core-free in $\overline{M}$, then
\[
\frac{|\Soc(\overline{M})|}{|\overline{K}\cap\Soc(\overline{M})|}
=\frac{|\overline{K}\,\Soc(\overline{M})|}{|\overline{K}|}\leqslant\frac{|\overline{M}|}{|\overline{K}|}\leqslant2nf,
\]
a contradiction. Thus $\overline{K}\geqslant\Soc(\overline{M})$. Since $\Soc(\overline{M})=\overline{M}^{(\infty)}$, this yields $(B\cap L)R\geqslant M^{(\infty)}$. As a consequence, $B\cap L\geqslant\SL_{n-1}(q)$. Note that $M^{(\infty)}\geqslant\hat{\,}\GL_{n-1}(q)\geqslant\SL_{n-1}(q)$ acts irreducibly on $q^{n-1}$. We conclude that either $B\cap L\geqslant q^{n-1}{:}\SL_{n-1}(q)$ or $B\cap L\leqslant\hat{\,}\GL_{n-1}(q)$. However, the latter causes $|A|_p|B\cap L|_p|\Out(L)|_p<|G|_p$, contrary to Lemma~\ref{Order}. Thus $B\cap L\geqslant q^{n-1}{:}\SL_{n-1}(q)$ as part~(a) asserts.
\end{proof}

Now we can state the main result of the this section.

\begin{theorem}\label{AS}
Let $L=\Soc(G)$ be a nonabelian simple group, and $G=AB$ with core-free subgroups $A,B$ of $G$. Suppose that $A$ is metacyclic. Then exactly one of the following holds.
\begin{itemize}
\item[(a)] $L=\A_n$, $\A_{n-1}\leqslant B\leqslant\Sy_{n-1}$, and $A$ is a transitive subgroup of $\Sy_n$.
\item[(b)] $L=\PSL_n(q)$ with $(n,q)\neq(2,4)$, $(2,5)$, $(2,9)$, $(3,2)$ or $(4,2)$, $A\cap L\leqslant\hat{\,}\GL_1(q^n){:}\Z_n$ and $q^{n-1}{:}\SL_{n-1}(q)\trianglelefteq B\cap L\leqslant\Pa_1$ or $\Pa_{n-1}$.
\item[(c)] $(G,A,B)$ lies in \emph{Table} $\ref{tab1}$, \emph{Table} $\ref{tab2}$ or \emph{Table} $\ref{tab4}$.
\end{itemize}
\end{theorem}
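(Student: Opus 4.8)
The plan is to reduce to the classification of maximal factorizations of almost simple groups in \cite{BOOK} and then use the rigidity of metacyclic groups to trim the list. Write $L=\Soc(G)$. Given $G=AB$ with $A$ metacyclic and $A,B$ core-free, choose maximal subgroups $M_1\geqslant A$ and $M_2\geqslant B$; then $G=M_1M_2$. If $L\leqslant M_1$ or $L\leqslant M_2$ the corresponding factor covers the solvable group $G/L\leqslant\Out(L)$ and the situation is handled directly, so we may assume $M_1$ and $M_2$ are core-free and $G=M_1M_2$ is one of the maximal factorizations tabulated in \cite{BOOK}. Two tools are used throughout: the arithmetic constraint of Lemma~\ref{Order}, that $|L|$ divides $|A\cap L|\,|B\cap L|\,|G/L|$; and the structural facts forced by $A$ being metacyclic, namely that $A$ is $2$-generated and metabelian, every Sylow subgroup of $A$ is metacyclic, and $A$ contains no subgroup $\Z_p\times\Z_p\times\Z_p$ for any prime $p$. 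Combined with the exponent bound of Lemma~\ref{Exponent} for $p$-elements and with Zsigmondy's theorem — which produces a large primitive prime divisor of $q^n-1$ or $q^{n-1}-1$ that $|A\cap L|$ would have to absorb — these exclude $A$ from all but a few of the maximal subgroups occurring in the tables.

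We then argue by the type of $L$. For $L=\PSL_2(q)$ with $7\leqslant q=p^f\neq9$ we invoke Theorem~\ref{2-dimension}: the factor contained in $\D_{2(q+1)/d}$ is automatically metacyclic, whereas $\Z_p^f{:}\Z_{(q-1)/d}$ contains a metacyclic subgroup of the index allowed by $|G/L|$ only when exactness of the factorization forces it to contain the whole unipotent radical $\Z_p^f$; a short comparison of orders via Lemma~\ref{Order} then yields part~(b) with $n=2$, together with those rows of Table~\ref{tab1} whose factor $A$ is metacyclic. The excluded cases $q\in\{4,5,9\}$, as well as $\PSL_3(2)$, $\PSL_4(2)$, $\PSL_2(4)\cong\PSL_2(5)$ and $\PSL_2(9)$, either reduce to $\PSL_2(7)$ or are treated as alternating groups, accounting for the exceptions in part~(b).

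For $L=\PSL_n(q)$ with $n\geqslant3$, outside a short list of small cases checked by \magma, we pass through the maximal factorizations of $L$: if the factor $M_1$ containing $A$ has $M_1\cap L$ a parabolic $\Pa_k$ with $1<k<n-1$, an imprimitive subgroup, a field- or subfield-type subgroup, or a classical subgroup, then the $p$-rank bound on metacyclic $p$-groups together with Lemma~\ref{Order} (using Zsigmondy to locate a primitive prime divisor $|A\cap L|$ cannot absorb) gives a contradiction; hence $M_1\cap L\leqslant\hat{\,}\GL_1(q^n){:}\Z_n$ and $M_2\cap L$ is a parabolic $\Pa_1$ or $\Pa_{n-1}$, and Lemma~\ref{LowerLinear} forces either $q^{n-1}{:}\SL_{n-1}(q)\trianglelefteq B\cap L$, which is part~(b), or the entries of Table~\ref{tab2}, which is part~(c). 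For the remaining classical groups $L=\PSp_{2m}(q)$, $\POmega_n^\pm(q)$, $\PSU_n(q)$ and for the exceptional groups of Lie type the maximal factorizations of \cite{BOOK} are examined in the same spirit; here no infinite family survives, because torus normalizers and other thin maximal subgroups are metacyclic only in bounded rank while every other maximal subgroup is too large in $p$-rank or carries a nonabelian composition factor that $A$ cannot accommodate. Only finitely many genuine examples remain, and a \magma\ computation places them in Table~\ref{tab4}.

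Finally, suppose $L=\A_n$. By the classification of maximal factorizations of $\Sym_n$ and $\A_n$, apart from a short explicit list of exceptions with small $n$, one factor is a maximal intransitive subgroup $(\Sym_k\times\Sym_{n-k})\cap G$ or a maximal imprimitive subgroup $(\Sym_a\wr\Sym_b)\cap G$. If $A$ is the metacyclic factor and, up to interchanging, $B$ lies in such a subgroup with $k\geqslant2$, respectively $a,b\geqslant2$, then $A$ is $k$-homogeneous, respectively transitive on the $b$ blocks with block kernel transitive on a block; as $A$ is metacyclic hence solvable, the known restrictions on solvable homogeneous groups and a direct analysis of the imprimitive case leave only finitely many exceptions, which are recorded in Table~\ref{tab4}. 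In the remaining case $k=1$, so $B$ is a point stabiliser with $\A_{n-1}\leqslant B\leqslant\Sym_{n-1}$ and $A$ is a metacyclic transitive subgroup of $\Sym_n$, which is part~(a); adding the exceptional isomorphisms among small groups to the appropriate tables completes the proof. The principal obstacle is the exhaustive pass through the maximal-factorization tables of \cite{BOOK} for every classical and exceptional family: for each entry one must decide whether the relevant factor can contain a metacyclic subgroup of the index forced by Lemma~\ref{Order}, which needs the interplay of the $p$-rank bound for metacyclic $p$-groups, the exponent bound of Lemma~\ref{Exponent} and Zsigmondy's theorem, together with a finite \magma\ verification to pin down Table~\ref{tab4}; the linear groups form the only infinite family of examples, and there the extra input is Lemma~\ref{LowerLinear}.
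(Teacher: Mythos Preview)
Your overall plan—reduce to the maximal factorizations in \cite{BOOK} and prune via Lemma~\ref{Order}, the $p$-rank bound on metacyclic $p$-groups, Lemma~\ref{Exponent} and Zsigmondy—matches the paper's strategy, and your treatment of $\PSL_2(q)$ and of alternating socles is essentially right. But there is a real gap in how you dispose of the remaining classical families. The paper's argument is recursive: it fixes a minimal counterexample $G$, and for each maximal $M\supseteq A$ whose $M\cap L$ carries a nonabelian simple composition factor $S$, it forms the quotient $\overline{M}/C$ (where $\overline{M}=M/\Rad(M)$ and $C$ centralizes the relevant socle) and observes that $\overline{M}/C=(\overline{A}C/C)(\overline{B}C/C)$ is a factorization of a strictly smaller almost simple group with socle $S$ and a metacyclic factor. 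By minimality \emph{that} factorization already satisfies the theorem, which forces $S$ to be linear or one of a handful of small groups; this is what drives the case elimination (and also what reduces $M\cap L=\hat{\,}\GL_a(q^b).\Z_b$ with $a>1$ down to $A\cap L\leqslant\hat{\,}\GL_1(q^n){:}\Z_n$ before Lemma~\ref{LowerLinear} can be applied—note that it is $A\cap L$, not $M_1\cap L$, that lands in the Singer normalizer).

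Your proposal omits this recursion and instead asserts that such $M$ ``carries a nonabelian composition factor that $A$ cannot accommodate'' or that Zsigmondy supplies a prime $|A\cap L|$ cannot absorb. Neither holds in general. For a concrete failure, take $L=\PSL_n(q)$ with $n\geqslant4$ even, $M\cap L=\PSp_n(q)$ and $B\cap L\leqslant\Pa_1$, a genuine line in Table~1 of \cite{BOOK}. Here $|L:\Pa_1|=(q^n-1)/(q-1)$ is coprime to $p$, so the metacyclic $p$-rank constraint on $A$ is vacuous; and every primitive prime divisor of $q^n-1$ divides $|\PSp_n(q)|$ (since $q^n-1$ is the top factor of $|\Sp_n(q)|$), so $M$ absorbs it and Zsigmondy gives nothing. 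The paper kills this case exactly by the recursive step: the induced factorization of $\overline{M}/C$ has socle $\PSp_n(q)$, which by minimality is not a counterexample, hence must appear among the theorem's outcomes—forcing $(n,q)=(4,3)$, then excluded by \magma. The same device is used throughout the symplectic, unitary and orthogonal cases to produce the short residual tables that are then finished off arithmetically. Without setting up and invoking this induction, your argument does not close.
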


\begin{table}[htbp]
\caption{}\label{tab4}
\centering
\begin{tabular}{|l|l|l|l|}
\hline
row & $G$ & $A$ & $B$\\
\hline
1 & $\A_p$ with prime $p\equiv3\pmod{4}$ & $\Z_p{:}\Z_{(p-1)/2}$ & $\Sy_{p-2}$\\
\hline
2 & $\Sy_p$ with prime $p\equiv3\pmod{4}$ & $\Z_p{:}\Z_{(p-1)/2}$ & $\Sy_{p-2}\times\Sy_2$\\
\hline
3 & $\Sy_p$ with prime $p$ & $\Z_p{:}\Z_{p-1}$ & $\Sy_{p-2}$, $\A_{p-2}\times\Sy_2$, $\Sy_{p-2}\times\Sy_2$\\
\hline
4 & $\Sy_5$ & $\Z_6$, $\Sy_3$, $\Sy_3\times\Sy_2$ & $\AGL_1(5)$\\
\hline
5 & $\Sy_6$ & $\Z_5{:}\Z_4$ & $\Sy_3\wr\Sy_2$\\
\hline
6 & $\PGL_2(9)$ & $\Z_{10}$, $\D_{10}$, $\D_{20}$ & $\Z_3^2{:}\Z_8$\\
\hline
7 & $\PGaL_2(9)$ & $\Z_{10}{:}\Z_4$ & $\Z_3^2{:}\Z_8$, $\Z_3^2{:}\D_8$\\
\hline
\multirow{2}*{8} & \multirow{2}*{$\PGaL_2(9)$} & $\Z_{10}$, $\D_{10}$, $\D_{20}$, & \multirow{2}*
{$\Z_3^2{:}\Z_8{:}\Z_2$}\\
 & & $\Z_5{:}\Z_4$, $\Z_{10}{:}\Z_4$ &\\
\hline
\multirow{2}*{9} & \multirow{2}*{$\A_8$} & $\Z_{15}$, $\D_{10}\times\Z_3$, & \multirow{2}*
{$\AGL_3(2)$}\\
 & & $\Z_{15}{:}\Z_4$ &\\
\hline
\multirow{2}*{10} & \multirow{2}*{$\Sy_8$} & $\D_{30}$, $\Z_5\times\Sy_3$, & \multirow{2}*
{$\AGL_3(2)$}\\
 & & $\AGL_1(5)\times\Z_3$ &\\
\hline
11 & $\PSL_5(2)$ & $\Z_{31}{:}\Z_5$ & $\Z_2^6{:}(\Sy_3\times\SL_3(2))$\\
\hline
12 & $\PSL_5(2){:}\Z_2$ & $\Z_{31}{:}\Z_{10}$ & $\Z_2^6{:}(\Sy_3\times\SL_3(2))$\\
\hline
13 & $\PSU_3(8){:}\Z_3^2$ & $\Z_{57}{:}\Z_9$ & $(\Z_2^3.\Z_2^6){:}\Z_{63}{:}\Z_3$\\
\hline
14 & $\PSU_3(8){:}(\Z_3\times\Sy_3)$ & $\Z_{57}{:}\Z_{18}$ & $(\Z_2^3.\Z_2^6){:}\Z_{63}{:}\Z_3$\\
\hline
15 & $\PSU_3(8){:}(\Z_3\times\Sy_3)$ & $\Z_{57}{:}\Z_9$, $\Z_{57}{:}\Z_{18}$ & $(\Z_2^3.\Z_2^6){:}(\Z_7\times\D_{18}){:}\Z_3$\\
\hline
16 & $\PSU_4(2)$ & $\Z_9{:}\Z_3$ & $\Z_2^4{:}\A_5$\\
\hline
17 & $\PSU_4(2){:}\Z_2$ & $\Z_9{:}\Z_6$ & $\Z_2^4{:}\A_5$\\
\hline
18 & $\PSU_4(2){:}\Z_2$ & $\Z_9{:}\Z_3$, $\Z_9{:}\Z_6$ & $\Z_2^4{:}\Sy_5$\\
\hline
19 & $\M_{11}$ & $\Z_{11}{:}\Z_5$ & $\M_9{:}\Z_2$\\
\hline
20 & $\M_{11}$ & $\Z_{11}$, $\Z_{11}{:}\Z_5$ & $\M_{10}$\\
\hline
21 & $\M_{12}$ & $\Z_6\times\Z_2$, $\D_{12}$ & $\M_{11}$\\
\hline
22 & $\M_{12}{:}\Z_2$ & $\Z_{12}{:}\Z_2$ & $\M_{11}$\\
\hline
23 & $\M_{22}{:}\Z_2$ & $\D_{22}$, $\Z_{11}{:}\Z_{10}$ & $\PSL_3(4){:}\Z_2$\\
\hline
24 & $\M_{23}$ & $\Z_{23}$ & $\M_{22}$\\
\hline
25 & $\M_{23}$ & $\Z_{23}{:}\Z_{11}$ & $\M_{22}$, $\PSL_3(4){:}\Z_2$, $\Z_2^4{:}\A_7$\\
\hline
26 & $\M_{24}$ & $\D_{24}$, $\D_8\times\Z_3$ & $\M_{23}$\\
\hline
\end{tabular}
\end{table}

\begin{remark}
\emph{Two nonisomorphic groups of type $\Z_{12}{:}\Z_2$ for $A$ arise in row 22 of Table \ref{tab4}; one is isomorphic to $\D_{24}$ and the other is not.}
\end{remark}

\begin{lemma}\label{LemmaAS}
If $G$ is not a classical group of Lie type, then \emph{Theorem} $\ref{AS}$ holds.
\end{lemma}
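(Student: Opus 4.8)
The plan is to reduce to the classification of maximal factorizations of finite almost simple groups due to Liebeck, Praeger and Saxl~\cite{BOOK}. Since $A$ and $B$ are core-free, choose maximal subgroups $H\geqslant A$ and $K\geqslant B$ of $G$; then $G=AB\subseteq HK\subseteq G$ yields the factorization $G=HK$. If $H$ or $K$ contains $L$, one argues directly --- intersecting with $L$ and using that $A$ is metacyclic reduces the problem to a factorization of $L$ or of a proper almost simple quotient, which is covered by the cases below --- so we may assume that $(G,H,K)$ is a core-free maximal factorization, hence appears in the tables of~\cite{BOOK}. Throughout I would use the elementary inequality coming from $|G|\di|A||B|$: since $|B|\leqslant|K|$, the factor of $G=HK$ containing $A$, say $H$, satisfies $|A|\geqslant|G|/|K|=|H{:}H\cap K|$, so $H$ must possess a metacyclic subgroup of order at least $|H{:}H\cap K|$. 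As $L=\Soc(G)$ is not a classical group of Lie type, $L$ is $\A_n$ for some $n\geqslant5$, a sporadic simple group, or an exceptional group of Lie type, and I would treat these three families separately; note that conclusion~(b) and Tables~\ref{tab1} and~\ref{tab2} of Theorem~\ref{AS} cannot occur here, so the target is conclusion~(a) together with the alternating and sporadic rows of Table~\ref{tab4}.

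When $L$ is an exceptional group of Lie type, I would go through the short list of maximal factorizations $G=HK$ in~\cite{BOOK}. In each such factorization both factors are ``large'' --- parabolic subgroups, or subgroups such as $\SL_3(q).d$ and $\SU_3(q).d$ inside $\G_2(q)$ --- and $|H{:}H\cap K|$ is a polynomial in $q$ of strictly larger degree than the largest order of a metacyclic subgroup of $H$, the latter being bounded, via the maximal tori of $H$ (and Lemma~\ref{Exponent} for the unipotent part), by a polynomial in $q$ of smaller degree. Hence the displayed inequality cannot hold, so no exceptional $G$ admits a core-free factorization with a metacyclic factor, consistent with the absence of exceptional groups from the tables. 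Along the same lines one checks that $\Sz(q)$ and the Ree groups have no factorization at all.

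When $L$ is a sporadic simple group, \cite{BOOK} gives a finite list of triples $(G,H,K)$, and for each I would enumerate the metacyclic subgroups $A\leqslant H$ with $|A|\geqslant|H{:}H\cap K|$ and then, for each such $A$, the subgroups $B\leqslant K$ with $G=AB$. This finite computation, carried out in \magma~\cite{magma} with the data of~\cite{atlas}, should return exactly rows 19--26 of Table~\ref{tab4} (the $\M_{11}$, $\M_{12}$, $\M_{22}$, $\M_{23}$, $\M_{24}$ examples).

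The main case, and the main obstacle, is $L=\A_n$. The maximal factorizations $\A_n=HK$ and $\Sy_n=HK$ recorded in~\cite{BOOK} split into those in which one factor, say $K$, has socle $\A_{n-1}$ --- so $\A_{n-1}\leqslant K\leqslant\Sy_{n-1}$ --- and finitely many exceptional ones. In the first type, if the metacyclic factor $A$ lies in the transitive factor $H$, then I would read off conclusion~(a): $A$ is a transitive metacyclic subgroup of $\Sy_n$ and $\A_{n-1}\leqslant B\leqslant\Sy_{n-1}$, where a routine orbit count decides which pairs actually satisfy $G=AB$. If instead $A$ lies in $K$, then $A$ is a metacyclic subgroup of $\Sy_{n-1}$, hence of order far smaller than the index in $\A_n$ of any transitive maximal subgroup once $n$ is large; combined with $|A|\geqslant|H{:}H\cap K|$ this bounds $n$. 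The remaining small values of $n$, together with the finitely many exceptional maximal factorizations --- which for $n=6$ include those coming from the exceptional outer automorphisms of $\A_6$ and produce $\PGL_2(9)$ and $\PGaL_2(9)$, and for $n=8$ exploit $\A_8\cong\GL_4(2)$ acting on $15$ points --- I would settle by direct computation, yielding rows 1--10 of Table~\ref{tab4}. Assembling the three families gives Theorem~\ref{AS} for $G$ not classical. The delicate step is exactly this last one: traversing the list of factorizations of $\A_n$ and $\Sy_n$ from~\cite{BOOK} correctly, separating the genuine point-stabilizer factorizations (which feed into~(a)) from the sporadic ones, and handling all the small-degree and $\A_6$--$\A_8$ cases without omission or repetition.
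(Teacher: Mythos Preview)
Your overall strategy of reducing to the Liebeck--Praeger--Saxl tables is sound, and for the sporadic and exceptional families your plan would work, though the paper is more efficient: it simply cites \cite{HLS} to conclude that no exceptional almost simple group has a factorization with a metacyclic factor, and \cite{Giudici} for the sporadic case, rather than re-running a case analysis through \cite{BOOK}.

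The genuine gap is in your treatment of $L=\A_n$. Your claimed dichotomy --- that the maximal factorizations of $\A_n$ and $\Sy_n$ split into ``one factor has socle $\A_{n-1}$'' plus ``finitely many exceptional ones'' --- is false. For each $k\in\{2,3,4,5\}$ there are infinite families of $n$ admitting a maximal factorization $G=HK$ with $K$ the stabilizer of a $k$-subset and $H$ a $k$-homogeneous maximal subgroup; these are neither of point-stabilizer type nor finitely many. Your order-bound argument (``$A$ is a metacyclic subgroup of $\Sy_{n-1}$, hence of order far smaller\dots'') never engages with these families, so the reduction does not go through as written.

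The paper avoids this by not passing to maximal factorizations at all in the alternating case. It invokes \cite[Theorem~D]{BOOK}, which already classifies \emph{all} factorizations $G=AB$ with $\Soc(G)=\A_n$: either $\A_{n-k}\trianglelefteq B\leqslant\Sy_{n-k}\times\Sy_k$ for some $1\leqslant k\leqslant5$ with $A$ acting $k$-homogeneously on $n$ points, or $n\in\{6,8,10\}$. For $k\geqslant2$ one then appeals to Kantor's classification of $k$-homogeneous groups \cite{Kantor}: a metacyclic $2$-homogeneous group of degree $n$ must be $\Z_p{:}\Z_{p-1}$ or $\Z_p{:}\Z_{(p-1)/2}$ with $n=p$ prime (and $p\equiv3\pmod4$ in the latter case), which yields rows~1--3 of Table~\ref{tab4}. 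The residual cases $n\in\{6,8,10\}$ are finished by machine. This use of $k$-homogeneity together with Kantor's theorem is precisely the missing ingredient in your argument.
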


\begin{proof}
Assume first that $\Soc(G)=\A_n$, acting naturally on $n$ points.
Then the factorization $G=AB$ is classified in \cite[Theorem D]{BOOK}, which shows that either
\begin{itemize}
\item[(i)] $\A_{n-k}\unlhd B\leqslant\Sy_{n-k}\times\Sy_k$ for some $k$ with $1\leqslant k\leqslant 5$, and $A$ is $k$-homogeneous, or

\item[(ii)] $n=6$, $8$ or $10$.
\end{itemize}
In (i), we only need to treat the case $k\geqslant2$, and note that $A,B$ may be interchanged when $n=5$.
Noticing that $A$ is metacyclic, by the classification of $k$-homogeneous groups by Kantor \cite{Kantor} (refer to \cite[p.289]{DM-book})
we conclude that either $A=\Z_p{:}\Z_{p-1}$, or $A=\Z_p{:}\Z_{(p-1)/2}$ with $p\equiv3\pmod{4}$.
For the former, $G=\Sy_p$, which leads to row 3 of Table \ref{tab4}. For the latter, either $G=\A_p$ and $B=\Sy_{p-2}$ or $G=\Sy_p$ and $B=\Sy_{p-2}\times\Sy_2$, as in row 1 and row 2 of Table \ref{tab4}, respectively. In (ii), computation by \magma~\cite{magma} shows that $\Soc(G)=\A_6\cong\PSL_2(9)$ or $\A_8$, and the
triple $(G,A,B)$ lies in rows 5--10 of Table \ref{tab4}.

If $\Soc(G)$ is a sporadic simple group, then by \cite{Giudici},
$\Soc(G)=\M_{11}$, $\M_{12}$, $\M_{22}$, $\M_{23}$ or $\M_{24}$, and the
triple $(G,A,B)$ lies in rows 19--26 of Table \ref{tab4}.

If $\Soc(G)$ is an exceptional simple group of Lie type, then by \cite{HLS},
$G$ has no metacyclic factor.
\end{proof}

%\vskip0.1in
\noindent{\bf Proof of Theorem~\ref{AS}:}

Suppose for a contradiction that there exists almost simple group $\widetilde{G}$ with socle $\widetilde{L}$ such that $\widetilde{G}=\widetilde{A}\widetilde{B}$ with $\widetilde{A}$ metacyclic and $\widetilde{B}$ core-free but $(\widetilde{G},\widetilde{A},\widetilde{B})$ is not as described in Theorem \ref{AS}. Take $\widetilde{G}$ to be such a counterexample with minimal order and let $M$ be a maximal subgroup of $\widetilde{G}$ containing $\widetilde{A}$. By Lemma \ref{LemmaAS}, $\Soc(\widetilde{G})$ is a classical group of Lie type.

If $\widetilde{A}\widetilde{L}\cap \widetilde{B}\widetilde{L}<\widetilde{G}$, then the factorization $\widetilde{A}\widetilde{L}\cap \widetilde{B}\widetilde{L}=(\widetilde{A}\cap \widetilde{B}\widetilde{L})(\widetilde{B}\cap \widetilde{A}\widetilde{L})$ (see \cite[Lemma~2(i)]{liebeck1996factorizations}) satisfies Theorem \ref{AS}. In particular, either $(\widetilde{A}\widetilde{L}\cap \widetilde{B}\widetilde{L},\widetilde{A}\cap \widetilde{B}\widetilde{L},\widetilde{B}\cap \widetilde{A}\widetilde{L})$ lies in Table \ref{tab1} or rows 11--18 of Table \ref{tab4}, or it satisfies part (b) of Theorem \ref{AS}. For the former, $\widetilde{L}$ is a linear or unitary group of prime dimension, and by Theorem \ref{2-dimension} and computation in \magma~\cite{magma}, we know that $(\widetilde{G},\widetilde{A},\widetilde{B})$ satisfies the conclusion of Theorem \ref{AS}. For the latter, since $(\widetilde{A}\cap \widetilde{B}\widetilde{L})\cap \widetilde{L}=\widetilde{A}\cap \widetilde{L}$ and $(\widetilde{B}\cap \widetilde{A}\widetilde{L})\cap \widetilde{L}=\widetilde{B}\cap \widetilde{L}$, $(\widetilde{G},\widetilde{A},\widetilde{B})$ also satisfies the conclusion of Theorem \ref{AS}. Therefore, we have $\widetilde{A}\widetilde{L}\cap \widetilde{B}\widetilde{L}=\widetilde{G}$.

As a consequence, $\widetilde{L}\nleqslant M$ for otherwise $\widetilde{A}\widetilde{L}\cap \widetilde{B}\widetilde{L}\leqslant M<G$. For the same reason, each maximal subgroup of $\widetilde{G}$ containing $\widetilde{B}$ is also core-free. Let $B$ be a maximal subgroup of $\widetilde{G}$ containing $\widetilde{B}$, and write $G=\widetilde{G}$, $L=\widetilde{L}$ and $A=\widetilde{A}$ as they will cause no confusion below. Now the factorization $G=MB$ is described in Theorem A of \cite{BOOK}, and computation by \magma~\cite{magma} shows that the $(G,A,B)$ satisfies part (b) or (c) of Theorem \ref{AS} if the triple $(G,M,B)$ is as in Table 3 of \cite{BOOK}. Consequently, $(G,M,B)$ lies in Table 1, Table 2 or Table 4 of \cite{BOOK}.

We apply the notation $\X_M$ and $\X_B$ appearing in Table 1 of \cite{BOOK} and define $\X_M=M\cap L$ and $\X_B=B\cap L$ in Table 2 and Table 4 of \cite{BOOK}. Let $R=\Rad(M)$ be the product of all solvable normal subgroups of $M$, $\overline{M}=M/R$, $Y=\X_MR/R$, $C=\C_{\overline{M}}(\Soc(Y))$, $\overline{A}=AR/R$ and $\overline{B}=(B\cap M)R/R$. Then $M=A(B\cap M)$ and so $\overline{M}/C=(\overline{A}C/C)(\overline{B}C/C)$. Moreover, $Y\cong\X_M/(\X_M\cap R)=\X_M/\Rad(\X_M)$ and $\overline{M}/C\lesssim\Aut(\Soc(Y))$.

If $\Soc(Y)$ is nonabelian simple, then $\Soc(Y)\times C\leqslant\overline{M}$ and so $\Soc(Y)\lesssim\overline{M}/C$, which implies that $\overline{M}/C$ is an almost simple group with the same socle of $Y$. Then since $|\overline{M}/C|<|G|$ and $G$ is a counterexample of minimal order, either the factorization $\overline{M}/C=(\overline{A}C/C)(\overline{B}C/C)$ satisfies Theorem \ref{AS} or $\Soc(Y)\leqslant\overline{B}C/C$. With this in mind, we analyze the factorization $G=MB$ case by case.

\textbf{Case 1.} Linear groups.

Let $L=\PSL_n(q)$, $q=p^f$ for prime number $p$ and $(n,q)\neq(3,2)$, $(4,2)$. By Theorem \ref{2-dimension}, we may assume $n\geqslant3$.

Suppose that $\X_M=\hat{\,}\GL_a(q^b).\Z_b$ with $ab=n$ and $\X_B=\Pa_1$ or $\Pa_{n-1}$. If $a=1$, then Lemma~\ref{LowerLinear} shows that $(G,A,B)$ is not a counterexample, a contradiction. Thus $a>1$ and so $A\cap L\leqslant\hat{\,}\GL_1((q^b)^a){:}\Z_a.\Z_b=\hat{\,}\GL_1(q^n){:}\Z_n$. Then again Lemma~\ref{LowerLinear} shows that $(G,A,B)$ is not a counterexample, a contradiction.

If $\X_M=\PSp_n(q)$ and $\X_B=\Pa_1$ or $\Pa_{n-1}$, then $\Soc(Y)=\PSp_n(q)$ and $\Soc(Y)\nleqslant\overline{B}C/C$. Hence $\overline{M}/C=(\overline{A}C/C)(\overline{B}C/C)$ satisfies Theorem \ref{AS}, and so we have $(n,q)=(4,3)$. However, searching in \magma~\cite{magma} for the factorizations $G=AB$ shows that this is not possible. Checking other candidates in Table 1 of \cite{BOOK} similarly, we obtain Table \ref{tab5}.
\begin{table}[!h]
\caption{}\label{tab5}
\centering
\begin{tabular}{|l|l|l|l|}
\hline
row & $\X_M$ & $\X_B$ & Conditions\\
\hline
1 & $\Pa_1$, $\Pa_{n-1}$ & $\hat{\,}\GL_a(q^b).\Z_b$ & $n=ab$, $b$ prime\\
\hline
2 & $\mathrm{Stab}(V_1\oplus V_{n-1})$ & $\hat{\,}\GL_a(q^2).\Z_2$ & $n=2a$, $q=2$ or $4$\\
\hline
3 & $\Pa_1$, $\Pa_{n-1}$ & $\PSp_n(q)$ & $n$ even\\
\hline
4 & $\mathrm{Stab}(V_1\oplus V_{n-1})$ & $\PSp_n(q)$ & $n$ even\\
\hline
\end{tabular}
\end{table}

For rows 3 and 4 of Table \ref{tab5}, either $p^{f(n-1)}-1$ has a primitive prime divisor $r$ or $(n,q)=(4,4)$. If $(n,q)\neq(4,4)$, then since $\Z_r$ has no faithful representation over $\GF(p)$ of dimension less than $(n-1)f$, the congruence $|A|\equiv0\pmod{pr}$ implies $\Z_p^{f(n-1)}\leqslant A$, contradicting that $A$ is metacyclic. If $(n,q)=(4,4)$, then since $|A\cap L||\Out(L)|$ is divisible by $|L|/|B\cap L|$, $|A\cap L|$ is divisible by $252$, which is impossible as $M\cap L$ has no metacyclic subgroup of order divisible by $252$.

For rows 1 and 2 of Table \ref{tab5}, since $|B\cap L|$ divides $|\GL_a(q^b)|\cdot b$, we deduce from Lemma \ref{Order} that $b|A\cap L||\Out(L)|$ is divisible by $q^{n(n-1)/2-ab(a-1)/2}=q^{n(n-a)/2}$. In light of Lemma \ref{Exponent}, the Sylow $p$-subgroup of $A\cap L$ has order dividing $p^{2\lceil\log_pn\rceil}$. Hence we have
\begin{equation}\label{eq5}
q^{n(n-a)/2}\di2fbp^{2\lceil\log_pn\rceil}.
\end{equation}

(i). $n=3$. Then $(a,b)=(1,3)$, and (\ref{eq5}) turns out to be $q^3\di6fp^{2\lceil\log_p3\rceil}$. If $p=2$, then $f=2$ as $(n,q)\neq(3,2)$. If $p\geqslant3$, then $\lceil\log_p3\rceil=1$ and so $(p,f)=(3,1)$. However, computation in \magma~\cite{magma} shows that there is no counterexample of Theorem \ref{AS} when $L=\PSL_3(3)$ or $\PSL_3(3)$.

(ii). $n=4$. Then $(a,b)=(2,2)$, and (\ref{eq5}) turns out to be $q^4\di4fp^{2\lceil\log_p4\rceil}$. If $p\leqslant3$, then $(p,f)=(3,1)$ as $(n,q)\neq(4,2)$. If $p\geqslant5$, then $\lceil\log_p3\rceil=1$ and it is impossible that $q^4\di4fp^2$. Accordingly, $q=3$. However, computation by \magma~\cite{magma} shows that there is no counterexample of Theorem \ref{AS} when $L=\PSL_4(3)$.

(iii). $n=5$. Then $(a,b)=(1,5)$, and (\ref{eq5}) turns out to be $q^{10}\di10fp^{2\lceil\log_p5\rceil}$. Since $p^{2\lceil\log_pn\rceil}<p^{2(\log_pn+1)}=n^2p^2$, we derive $q^{10}<250fp^2$, a contradiction.

(iv). $n\geqslant6$. Since $a\leqslant n/2$, $b\leqslant n$ and $p^{2\lceil\log_pn\rceil}<n^2p^2$, (\ref{eq5}) implies $q^{n^2/4}<2fn^3p^2$, that is, $p^{fn^2/4-2}<2fn^3$. Consequently, $2^{fn^2/4-2}<2fn^3$, which indicates $n=6$ and $f=1$. Then there is no solution for (\ref{eq5}), a contradiction.

\textbf{Case 2.} Symplectic groups.

Let $L=\PSp_{2m}(q)$, $m\geqslant2$, $q=p^f$ for prime number $p$ and $(m,q)\neq(2,2)$. For $(m,q)=(2,3)$, $(2,4)$, $(3,2)$, $(4,2)$ computation in \magma~\cite{magma} directly shows that the factorization $G=AB$ satisfies Theorem~\ref{AS}. Thus we have $(m,q)\neq(2,3)$, $(2,4)$, $(3,2)$ or $(4,2)$.

If $\X_M=\PSp_{2a}(q^b).\Z_b$ with $ab=m$ and $b$ prime and $\X_B=\Pa_1$, then $\Soc(Y)=\PSp_{2a}(q^b)$ and $\Soc(Y)\nleqslant\overline{B}C/C$. This implies that $\overline{M}/C=(\overline{A}C/C)(\overline{B}C/C)$ satisfies Theorem \ref{AS}, and so row 1 of Table \ref{tab6} occurs. Checking other candidates in Table 1 and Table 2 of \cite{BOOK} similarly, we obtain rows 2--10 of table \ref{tab6}.
\begin{table}[!h]
\caption{}\label{tab6}
\centering
\begin{tabular}{|l|l|l|l|}
\hline
row & $\X_M$ & $\X_B$ & Conditions\\
\hline
1 & $\PSp_2(q^m).\Z_m$ & $\Pa_1$ & $m$ prime\\
\hline
2 & $\PSp_2(q^m).\Z_m$ & $\PSO^\pm_{2m}(q)$ & $m$ prime, $p=2$\\
\hline
3 & $\PSO^\pm_4(q)$ & $\PSp_2(q^2).\Z_2$ & $m=2$, $p=2$\\
\hline
4 & $\PSO^+_6(q)$ & $\PSp_2(q^3).\Z_3$ & $m=3$, $p=2$\\
\hline
5 & $\PSO^-_4(q)$ & $\Pa_2$ & $m=2$, $p=2$\\
\hline
6 & $\Pa_m$ & $\PSO^-_{2m}(q)$ & $p=2$\\
\hline
7 & $\PSO^-_4(q)$ & $\PSp_2(q)\wr\Sy_2$ & $m=2$, $p=2$\\
\hline
8 & $\PSp_m(q)\wr\Sy_2$ & $\PSO^-_{2m}(q)$ & $m$ even, $p=2$\\
\hline
9 & $\PSO^+_4(q)$ & $\Sz(q)$ & $m=2$, $p=2$, $f$ odd\\
\hline
10 & $\PSO^+_6(q)$ & $\G_2(q)$ & $m=3$, $p=2$\\
\hline
\end{tabular}
\end{table}

For row 1 or 5 of Table \ref{tab6}, $|A\cap L||\Out(L)|$ is divisible by $(q^{2m}-1)/(q-1)$, and hence $\PSL_2(q^m)$ has a metacyclic subgroup $Y$ such that $2fm|Y|(q-1)$ is divisible by $q^{2m}-1$, not possible. For row 2 of Table \ref{tab6}, $|L|/|B\cap L|$ is divisible by $q^m/2$, and so is $|A\cap L||\Out(L)|$. Note that every metacyclic $2$-subgroup of $M\cap L$ has order dividing $4m$. We then derive a contradiction that $2^{fm}\di16mf$, which excludes row 2 of Table \ref{tab6}. Similar argument excludes rows 3, 4, 7 and 9 of Table \ref{tab6}, as well as row 10 for $q\neq4$. If $q=4$ in row 10 of Table \ref{tab6}, then $M\cap L$ must contain a cyclic subgroup of order $2^2\cdot17$ since $|A\cap L|$ is divisible by $2^5\cdot17$, but $\PSO^+6(4)$ has no element of order $68$, a contradiction. Now either row 6 or row 8 of Table \ref{tab6} occurs. Then $|A\cap L||\Out(L)|$ is divisible by $q^m/2$, and so $2^{1+2(1+\lceil\log_2m\rceil)}|\Out(L)|$ is divisible by $q^m$, from which we deduce that $(m,f)=(2,3)$, $(2,4)$, $(3,2)$, $(4,2)$, $(5,1)$, $(5,2)$, $(6,1)$, $(7,1)$, $(8,1)$, $(9,1)$, $(10,1)$ or $(11,1)$.

(i). $m=2$ and $f=3$. Since $|G|/|B|=|L|/|B\cap L|$ is divisible by $2^5\cdot7$, $A\cap L$ is divisible by $2^4\cdot7$ and hence contains a cyclic subgroup of order $2^2\cdot7$. However, $\PSp_4(2^3)$ has no element of order $28$, a contradiction.

(ii). $m=2$ and $f=4$. Since $|G|/|B|$ is divisible by $2^7\cdot17$, $A\cap L$ is divisible by $2^4\cdot17$ and hence contains a cyclic subgroup of order $2^2\cdot17$. However, $\PSp_4(2^4)$ has no element of order $68$, a contradiction.

(iii). $m=3$, $f=2$. Then $M\cap L=\Z_2^{12}{:}\GL_3(4)$ and $A\cap L$ is divisible by $2^5\cdot7$. This indicates that there is an element of order $14$ in $\GL_3(4)$, not possible.

(iv). $m=4$, $f=2$. Then $M\cap L=\Z_2^{20}{:}\GL_4(4)$ and $A\cap L$ is divisible by $2^6\cdot17$. This shows that there is an element of order $68$ in $\GL_4(4)$, not possible.

(v). $m=5$, $f=1$. Then $M\cap L=\Z_2^{15}{:}\GL_5(2)$ and $A\cap L$ is divisible by $2^4\cdot31$. This implies that there is an element of order $62$ in $\GL_5(2)$, not possible.

(vi). $m=5$, $f=2$. Then $M\cap L=\Z_2^{30}{:}\GL_5(4)$ and $A\cap L$ is divisible by $2^8\cdot11$. This implies that there is an element of order $88$ in $\GL_5(4)$, not possible.

(vii). $m=6$, $f=1$. If $M=\Z_2^{21}{:}\GL_6(2)$, then $\overline{M}=\overline{A}~\overline{B}$ implies $\overline{A}\leqslant\GaL_3(4)$ or $\GaL_2(8)$, but neither $\GaL_3(4)$ nor $\GaL_2(8)$ has a metacyclic subgroup of order divisible by $2^3\cdot63$, contradicting the condition that $|A|$ is divisible by $2^5\cdot63$. If $M=\PSp_6(2)\wr\Sy_2$, then $B\cap M$ is contained in the subgroup $N=\PSp_6(2)\times\PSp_6(2)$ of $M$ since $|B\cap M|=|M||B|/|G|$, so $N=(A\cap N)(B\cap M)$, which yields the contradiction that $\PSp_6(2)$ has a factorization with a metacyclic factor.

(viii). $7\leqslant m\leqslant11$, $f=1$. This is also not possible for the similar reason as (vii).

\textbf{Case 3.} Unitary groups.

Note that there are no factorizations for unitary groups of odd dimension in Table 1 and Table 2 of \cite{BOOK}. Let $L=\PSU_{2m}(q)$, $m\geqslant2$ and $q=p^f$ for prime number $p$. For $(m,q)=(2,2)$, $(2,3)$, $(3,2)$ computation in \magma~\cite{magma} directly shows that the factorization $G=AB$ satisfies Theorem~\ref{AS}. Thus we have $(m,q)\neq(2,2)$, $(2,3)$, $(3,2)$.

If $\X_M=\N_1$ and $\X_B=\Pa_m$, then $\Soc(Y)=\PSU_{2m-1}(q)$ and $\Soc(Y)\nleqslant\overline{B}C/C$. Hence $\overline{M}/C=(\overline{A}C/C)(\overline{B}C/C)$ satisfies Theorem \ref{AS}, and so we have $(m,q)=(2,8)$. However, computation by \magma~\cite{magma} shows that this does not give rise to the factorization $G=AB$ as required. Checking other candidates in Table 1 of \cite{BOOK} similarly, we obtain table \ref{tab7}.
\begin{table}[!h]
\caption{}\label{tab7}
\centering
\begin{tabular}{|l|l|l|l|}
\hline
row & $\X_M$ & $\X_B$ & Conditions\\
\hline
1 & $\Pa_m$ & $\N_1$ & \\
\hline
2 & $\hat{\,}\SL(m,4).\Z_2$ & $\N_1$ & $m\geqslant3$, $q=2$\\
\hline
3 & $\hat{\,}\SL(m,16).\Z_3.\Z_2$ & $\N_1$ & $q=4$\\
\hline
\end{tabular}
\end{table}

For rows 2 and 3 of Table \ref{tab7}, $|L|/|B\cap L|$ is divisible by $q^{2m-1}$, and so $M$ must have a metacyclic subgroup of order divisible by $q^{2m-1}$. However, this implies $2f\cdot2^{2\lceil\log_2m\rceil+1}\geqslant2^{f(2m-1)}$, which is not possible. Now row 1 of Table \ref{tab7} occurs. Then $2fp^{2+2\lceil\log_pm\rceil}$ is divisible by $q^{2m-1}$. It follows that either $m=2$ and $q=p\geqslant5$, or $(m,q)=(2,4)$, $(4,2)$ or $(5,2)$.

(i). $m=2$, $q=4$. Then $A\cap L$ is divisible by $2^4\cdot17$, which indicates that $\PSL_2(16)$ has a metacyclic subgroup of order divisible by $68$, not possible.

(ii). $m=2$, $q=p\geqslant5$. Let $r$ be a primitive prime divisor of $p^4-1$.
Then $|A\cap L|$ is divisible by $pr$, and so $\PSL_2(p^2)$ has a metacyclic subgroup of order divisible by $pr$, not possible.

(iii). $m=4$, $q=2$. Here $A\cap L$ is divisible by $2^6\cdot17$. Then considering the factorization $\overline{M}/C=(\overline{A}C/C)(\overline{B}C/C)$ we conclude that $\GL_2(16)$ has a metacyclic subgroup of order divisible by $68$, which is not possible.

(iv). $m=5$, $q=2$. Here $A\cap L$ is divisible by $2^8$. Then considering the factorization $\overline{M}/C=(\overline{A}C/C)(\overline{B}C/C)$, we conclude that $\GL_1(4^5)$ has a metacyclic subgroup of order divisible by $2$, not possible.

\textbf{Case 4.} Orthogonal groups of odd dimension.

Let $L=\POm_{2m+1}(q)$, $m\geqslant3$ and $q=p^f$ for prime number $p\geqslant3$. For $(m,q)=(3,3)$ computation in \magma~\cite{magma} directly shows that the factorization $G=AB$ satisfies Theorem~\ref{AS}. Thus we have $(m,q)\neq(3,3)$.

If $\X_M=\N_1^-$ and $\X_B=\Pa_m$, then $\Soc(Y)=\POm^-_{2m}(q)$ and $\Soc(Y)\nleqslant\overline{B}C/C$. This implies that $\overline{M}/C=(\overline{A}C/C)(\overline{B}C/C)$ satisfies Theorem \ref{AS}, a contradiction. Checking other candidates in Table 1 and Table 2 of \cite{BOOK} similarly, we conclude that either $M\cap L=\Pa_m$ and $B\cap L=\N_1^-$, or $M\cap L=\N_1^+$ and $B\cap L=\G_2(q)$ with $m=3$. The latter is impossible because $\PSL_4(q)$ does not contain a solvable subgroup of order divisible by $pr$, where $r$ is a primitive prime divisor of $p^{4f}-1$. For the former, $p^{fm}\di fp^{2\lceil\log_p(2m+1)\rceil}$ implies $(m,q)=(3,5)$, $(4,3)$, $(4,5)$, $(5,3)$ or $(6,3)$.

(i). $m=3$, $q=5$. Since $|A\cap L|$ is divisible by $5^3\cdot31$, the factor group $\Z_5^3{:}\GL_3(5)$ of $M\cap L$ modulo $\Z_5^3$ must contain a solvable subgroup of order divisible by $155$, which is not possible.

(ii). $m=4$, $q=3$ or $5$. Let $r$ be a primitive prime divisor of $p^4-1$. Then it is a contradiction that the factor group $\Z_p^4{:}\GL_4(p)$ of $M\cap L$ modulo $\Z_p^6$ contains a solvable subgroup of order divisible by $p^2r$.

(iii). $m=5$, $q=3$. Then $|A\cap L|$ is divisible by $3^5\cdot11$, which indicates that $\PSL_5(3)$ has a metacyclic subgroup of order divisible by $33$, not possible.

(iv). $m=6$, $q=3$. Here $|A\cap L|$ is divisible by $3^6\cdot7$. Then considering the factorization $\overline{M}/C=(\overline{A}C/C)(\overline{B}C/C)$ we conclude that either $\PGaL_2(3^3)$ or $\PGaL_3(3^2)$ has a solvable subgroup of order divisible by $63$, which is not true.

\textbf{Case 5.} Orthogonal groups of even dimension.

Let $L=\POm^\varepsilon_{2m}(q)$ with $\varepsilon=\pm$, $m\geqslant4$, $q=p^f$ for prime number $p$ and $d=\gcd(2,q-1)$. For $(m,q)=(4,2)$, $(4,3)$, $(5,2)$, $(6,2)$ computation in \magma~\cite{magma} directly shows that the factorization $G=AB$ satisfies Theorem~\ref{AS}. Thus we have $(m,q)\neq(4,2)$, $(4,3)$, $(5,2)$ or $(6,2)$.

If $\X_M=\hat{\,}\GU(m,q)$ and $\X_B=\Pa_1$ with odd $m\geqslant5$, then $\Soc(Y)=\PSU(m,q)$ and $\Soc(Y)\nleqslant\overline{B}C/C$. This implies that $\overline{M}/C=(\overline{A}C/C)(\overline{B}C/C)$ satisfies Theorem \ref{AS}, a contradiction. Checking other candidates in Table 1, Table 2 and Table 4 of \cite{BOOK} similarly, we obtain table \ref{tab8}.
\begin{table}[!h]
\caption{}\label{tab8}
\centering
\begin{tabular}{|l|l|l|l|}
\hline
row & $\X_M$ & $\X_B$ & Conditions\\
\hline
1 & $\Pa_1$ & $\hat{\,}\GU_m(q)$ & $\varepsilon=-$, $m\geqslant5$, $m$ odd\\
\hline
2 & $\Pa_1$, $\Pa_3$ or $\Pa_4$ & $\Omega_7(q)$ & $\varepsilon=+$, $m=4$\\
\hline
3 & $\hat{\,}((q+1)/d\times\Omega^+_6(q)).2^d$ & $\Omega_7(q)$ & $\varepsilon=+$, $m=4$, $q>2$\\
\hline
4 & $\Pa_1$, $\Pa_3$ or $\Pa_4$ & $\hat{\,}((q+1)/d\times\Omega^-_6(q)).2^d$ & $\varepsilon=+$, $m=4$\\
\hline
5 & $\Pa_m$ or $\Pa_{m-1}$ & $\N_1$ & $\varepsilon=+$, $m\geqslant5$\\
\hline
6 & $\hat{\,}\GL_m(q).\Z_2$ & $\N_1$ & $\varepsilon=+$, $m\geqslant5$\\
\hline
7 & $\Pa_m$ or $\Pa_{m-1}$ & $\N_2^-$ & $\varepsilon=+$, $m\geqslant5$\\
\hline
\end{tabular}
\end{table}

If row 1 of Table \ref{tab8} occurs, then $2fp^{2+2\lceil\log_p(m-2)\rceil}\geqslant p^{fm(m-1)/2}$, which is not possible. If row 4 of Table \ref{tab8} occurs, then $p^{6f}\di12fp^{2\lceil\log_p8\rceil}$, still not possible. For rows 2 and 3 of Table \ref{tab8}, $p^{3f}\di6fp^{2\lceil\log_p8\rceil}$ implies $q=4,5$ or $7$. These are impossible since $\PSL_4(q)$ does not contain a solvable subgroup of order divisible by $pr$, where $r$ is a primitive prime divisor of $p^{4f}-1$. If row 7 of Table \ref{tab8} occurs, then $p^{2f(m-1)}\di4fp^{2\lceil\log_p2m\rceil}$, not possible. Now row 5 or 6 of Table \ref{tab8} occurs. We then have $p^{f(m-1)}\di2fp^{2\lceil\log_p2m\rceil}$, which implies that $(m,q)=(5,3)$, $(5,4)$, $(5,5)$, $(5,7)$, $(6,3)$, $(6,4)$, $(7,2)$, $(7,3)$, $(8,2)$, $(9,2)$, $(10,2)$, $(11,2)$ or $(12,2)$.

(i). $m=5$, $q=3,4,5 $ of $7$. In view of the factorization $\overline{M}/C=(\overline{A}C/C)(\overline{B}C/C)$ and the condition that $|A|$ is divisible by $|L|/|B\cap L|$, we conclude that $\GL_1(q^5){:}\Z_5$ contains a solvable subgroup of order divisible by $q^2$, which is not true.

(ii). $m=6$, $q=3$ or $4$. In view of the factorization $\overline{M}/C=(\overline{A}C/C)(\overline{B}C/C)$ and the condition that $|A|$ is divisible by $|L|/|B\cap L|$, we conclude that either $\PSL_2(q^3){:}\Z_3$ or $\PSL_3(q^2){:}\Z_2$ contains a metacyclic subgroup of order divisible by $q^3$, not possible.

(iii). $m=7$, $q=2$ or $3$. In view of the factorization $\overline{M}/C=(\overline{A}C/C)(\overline{B}C/C)$ and the condition that $|A|$ is divisible by $|L|/|B\cap L|$, we conclude that $\GL_1(q^7){:}\Z_7$ contains a metacyclic subgroup of order divisible by $q^3$, impossible.

(iv). $m=8$, $q=2$. In view of the factorization $\overline{M}/C=(\overline{A}C/C)(\overline{B}C/C)$ and the condition that $|A|$ is divisible by $|L|/|B\cap L|$, we conclude that $\PGaL_4(4)$ contains a metacyclic subgroup of order divisible by $2^5\cdot17$, not possible.

(v). $9\leqslant m\leqslant12$, $q=2$. Then the same argument as above shows that this is not possible either. The proof is thus completed.
\qed

\section{Affine type}

In this section, we determine quasiprimitive permutation groups of affine type with a metacyclic transitive subgroup.
Notice that quasiprimitive permutation groups of affine type are primitive.

\begin{theorem}\label{HA}
Let $G\leqslant\Sym(\Ome)$, $\omega\in\Ome$, and $R$ be a metacyclic transitive subgroup of $G$. If $G$ is primitive with socle $\Z_p^n$ for some prime number $p$ and positive integer $n$, then one of the following holds.
\begin{itemize}
\item[(a)] $n=1$, and $\Z_p\leqslant R\leqslant G\leqslant\Z_p{:}\Z_{p-1}$.
\item[(b)] $n=2$, $\Z_p.\Z_p\leqslant R\leqslant G\leqslant\AGL_2(p)$, and $G_\omega$ is an irreducible subgroup of $\GL_2(p)$.
\item[(c)] $n=3$, $p=2$, $G=\AGL_3(2)$, and $R=\Z_4\times\Z_2$, $\D_8$ or $\Q_8$.
\item[(d)] $n=3$, $p=3$, $G\leqslant\AGL_3(3)$, $R=\Z_9\times\Z_3$, $3_-^{1+2}$ or $\Z_9{:}\Z_6$, and $G_\omega=\A_4$, $\Sy_4$, $\A_4\times\Z_2$, $\Sy_4\times\Z_2$, $\Z_{13}{:}\Z_3$, $\Z_{13}{:}\Z_6$, $\SL_3(3)$ or $\GL_3(3)$.
\item[(e)] $n=4$, $p=2$, $G\leqslant\AGL_4(2)$, $R=\Z_4{:}\Z_4$, $\Z_8{:}\Z_2$ or $\Q_{16}$, and $G_\omega=\AGL_1(5)$, $\Z_3^3{:}\Z_4$, $\Z_{15}{:}\Z_4$, $\Z_3^2{:}\D_8$, $\A_5$, $\Sy_5$, $\A_5\times\Z_3$, $\A_5{:}\Sy_3$, $\A_6$, $\Sy_6$, $\A_7$ or $\GL_4(2)$.
\end{itemize}
\end{theorem}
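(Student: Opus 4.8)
The plan is to reduce the whole problem to metacyclic $p$-groups and then to a bounded list of dimensions. Since $G$ is primitive of affine type, $G=V{:}G_\omega$ with $V=\Z_p^n$ the socle, $G_\omega\leqslant\GL_n(p)$ irreducible, and $|\Ome|=p^n$, so $p^n$ divides $|R|$. First I would note that a Sylow $p$-subgroup $P$ of $R$ is again transitive: its point stabiliser $P\cap G_\omega$ is a Sylow $p$-subgroup of $G_\omega$, so $|P:P_\omega|=p^n$. As a subgroup of the metacyclic group $R$, $P$ is metacyclic. Hence it suffices to analyse metacyclic $p$-groups acting transitively on $p^n$ points inside $\AGL_n(p)$, and afterwards to recover $R$ (which need not be a $p$-group) and $G$ from $P$.

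Next I would extract the structural constraints. Put $W=P\cap V$. Then $W$ is elementary abelian, hence of rank at most $2$ by Lemma~\ref{DirectProduct} (a metacyclic group has no subgroup $\Z_p^3$), and $W\neq1$, since $W=1$ would embed $P$ in $G_\omega\leqslant\GL_n(p)$ and hence make $P$ fix the origin of $V$, contrary to transitivity. The quotient $P/W\cong PV/V$ is a metacyclic $p$-subgroup of $\GL_n(p)$, so (a metacyclic $p$-group has order at most the square of its exponent, and the exponent of a $p$-element of $\GL_n(p)$ is at most $p^{\lceil\log_pn\rceil}$ by Lemma~\ref{Exponent}) one gets $p^n\leqslant|P|=|W|\,|P/W|\leqslant p^{2+2\lceil\log_pn\rceil}$, leaving only finitely many pairs $(n,p)$. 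I would also use Lemma~\ref{NormalCyclic}: a cyclic normal subgroup $N$ of the transitive group $R$ is semiregular, so $|N|$ divides $p^n$, whence $N$ is a $p$-group with $|N|=p^s$, and the point stabiliser $R_\omega$, meeting $N$ trivially, embeds in the cyclic group $R/N$ and is therefore cyclic. Combining the exponent estimate with this (an element of $R$ generating $\Z_{p^{n-s}}$ modulo $N$ has order at least $p^{n-s}$, again bounded by the exponent of $\AGL_n(p)$), with a recursive look at the block system of $N$-orbits---on each block $R_{(\calB)}$ acts metacyclically and transitively on $p^s$ points with $N$ regular---and with the remark that for $p\geqslant5$ the $p$-part of $\AGL_3(p)$ has exponent $p$, so it cannot contain a metacyclic $p$-group of order $p^3$, the list is cut down to $n\leqslant4$, with $n=3$ only for $p\in\{2,3\}$ and $n=4$ only for $p=2$.

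Then I would treat the surviving dimensions. For $n=1$, $\GL_1(p)=\Z_{p-1}$ gives $G\leqslant\AGL_1(p)=\Z_p{:}\Z_{p-1}$ and every transitive subgroup is $\Z_p{:}\Z_d$ for some $d\mid p-1$, yielding part~(a). For $n=2$, primitivity forces $G_\omega$ irreducible, and the analysis of $P$ shows it contains a regular subgroup of order $p^2$, i.e.\ a copy of $\Z_p.\Z_p$, yielding part~(b). For $n=3$ with $p\in\{2,3\}$ and $n=4$ with $p=2$, I would enumerate the irreducible subgroups $G_\omega$ of $\GL_n(p)$ and, for each primitive $G=\Z_p^n{:}G_\omega$, use \magma\ to decide which metacyclic subgroups are transitive, reading off parts~(c),(d),(e); here one must remember that $R$ need not be a $p$-group, so after locating a transitive metacyclic $p$-subgroup $P$ one still checks which metacyclic overgroups of $P$ in $G$ stay transitive---this is what produces entries such as $\Z_9{:}\Z_6$.

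The main obstacle is the reduction of the dimension: the easy order/exponent inequalities only bound $n$ by roughly $2\lceil\log_pn\rceil+2$, which for $p=2$ still permits dimensions up to about $8$, so the genuine content is eliminating these moderate dimensions. The leverage is that a minimal metacyclic transitive $R$ is a regular metacyclic $p$-group of order $p^n$ whose cyclic normal subgroup $\Z_{p^s}$ is semiregular; this forces the Jordan type of its linear part in $\AGL_n(p)$ to be rigid enough that the possible element orders clash with the exponent constraint once $n$ grows, and one also feeds back that $P/W$ is a metacyclic $p$-subgroup of $\GL_n(p)$ of order at least $p^{n-1}$ when $n\geqslant3$. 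Making this clash explicit in each residual dimension---rather than the clean reduction once $n\leqslant4$---is the crux, after which the admissible dimensions are finite, \magma-assisted verifications.
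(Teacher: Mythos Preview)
Your plan is correct and follows the same overall arc as the paper: pass to a metacyclic Sylow $p$-subgroup $P$ of $R$, use exponent bounds on $p$-elements to restrict $(n,p)$ to a finite list, then finish the small cases with \magma. The main difference is in how the exponent bound is obtained, and the paper's version is cleaner and worth knowing.

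You split $|P|=|W|\cdot|P/W|$ with $W=P\cap V\leqslant\Z_p^2$ and $P/W\leqslant\GL_n(p)$, yielding $n\leqslant 2+2\lceil\log_p n\rceil$; for $p=2$ this leaves $n$ up to $10$, and you then invoke further ad hoc arguments (Jordan type, recursion on $N$-orbit blocks, the cyclicity of $R_\omega$) to shave the list. The paper instead uses a single embedding $\AGL_n(p)\hookrightarrow\GL_{n+1}(p)$ and applies Lemma~\ref{Exponent} once to all of $P$: every $p$-element of $P$ has order at most $p^{\lceil\log_p(n+1)\rceil}$, while a metacyclic $p$-group of order at least $p^n$ must contain an element of order at least $p^{\lceil n/2\rceil}$. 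This gives the sharper inequality $\lceil n/2\rceil\leqslant\lceil\log_p(n+1)\rceil$, which already cuts the list (for $n\geqslant3$) to $(n,p)\in\{(3,2),(3,3),(4,2),(4,3),(5,2),(6,2),(8,2)\}$. The residual cases $(4,3),(5,2),(6,2),(8,2)$ are then eliminated by \magma\ just as your surviving cases would be, so your extra structural observations (semiregularity of $N$, cyclicity of $R_\omega$, the block recursion) are not actually needed---they are correct, but the $\GL_{n+1}(p)$ trick does the job in one line and keeps the \magma\ workload smaller.
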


\begin{proof}
As $\Soc(G)=\Z_p^n$, we can view $G$ as a subgroup of $\AGL_n(p)$ acting on the $n$-dimensional vector space over $\GF(p)$. The transitivity of $R$ implies that $|R|$ is divisible by $p^n$. Let $P$ be a Sylow $p$-subgroup of $R$. Then $P$ is metacyclic, and $|P|$ is divisible by $p^n$. Consequently, there exists an element of order divisible by $p^{\lceil n/2\rceil}$ in $R$. Note that $P\leqslant G\leqslant\AGL_n(p)\leqslant\GL_{n+1}(p)$. We then conclude from Lemma \ref{Exponent} that any element of $P$ has order dividing $p^{\lceil\log_p(n+1)\rceil}$, and so
\begin{equation}\label{eq1}
\lceil n/2\rceil\leqslant\lceil\log_p(n+1)\rceil.
\end{equation}
Thus, $n/2\leqslant\lceil n/2\rceil<\log_p(n+1)+1\leqslant\log_2(n+1)+1$, which implies $n\leqslant8$. Substituting $n=8$ into (\ref{eq1}) we obtain $4\leqslant\lceil\log_p9\rceil$, and hence $p=2$. Similar argument for $n=3,4,5,6,7$ gives all the solutions for (\ref{eq1}) when $n\geqslant3$:
\begin{equation}\label{eq2}
(n,p)\in\{(3,2),(3,3),(4,2),(4,3),(5,2),(6,2),(8,2)\}.
\end{equation}

The primitivity of $G$ implies that $G_\omega$ is an irreducible subgroup of $\GL_n(p)$. If $n=1$ or $2$, then since $|R|$ is divisible by $p^n$, we obtain part~(a) or~(b). Next assume $n\geqslant3$. To complete the proof, we only need to discuss the candidates for $(n,p)$ in (\ref{eq2}).

\textbf{Case 1.} $n=3$ and $p=2$. Checking the database of affine primitive groups of degree $8$ in \magma~\cite{magma}, we know that $G=\AGL_3(2)$, and $R=\Z_4\times\Z_2$, $\D_8$ or $\Q_8$ as $R$ is a metacyclic transitive subgroup of $G$.

\textbf{Case 2.} $n=3$ and $p=3$. In this case, $G\leqslant\AGL_3(3)$. Searching the metacyclic transitive subgroups of $\AGL_3(3)$ and checking the database of affine primitive groups of degree $27$ in \magma~\cite{magma}, we obtain the possibilities for $R$ and $G_\omega$ as described in part~(d).

\textbf{Case 3.} $n=4$ and $p=2$. Then $G\leqslant\AGL_4(2)$, and searching in \magma~\cite{magma} as above produces the possibilities for $R$ and $G_\omega$ as in part~(e).

\textbf{Case 4.} $n=4$ and $p=3$. Since $R<\AGL_4(3)\leqslant\GL_5(3)$, we know from Lemma \ref{Exponent} that the order of a $3$-element in $R$ is at most $9$. However, $3^4$ divides $|R|$ since $R$ is a transitive subgroup of $\AGL_4(3)$. Therefore, $R$ is of form $\Z_9{:}\Z_9$. However, computation in \magma~\cite{magma} shows that $\AGL_4(3)$ has no transitive subgroup isomorphic to $\Z_9{:}\Z_9$, a contradiction.

\textbf{Case 5.} $n=5$ and $p=2$. Since $R<\AGL_5(2)\leqslant\GL_6(2)$, we know from Lemma \ref{Exponent} that the order of a $2$-element in $R$ is at most $8$. However, $2^5$ divides $|R|$ since $R$ is a transitive subgroup of $\AGL_5(2)$. Therefore, $R$ is of form $\Z_4{:}\Z_8$, $\Z_8{:}\Z_4$ or $\Z_8{:}\Z_8$. However, computation in \magma~\cite{magma} shows that no transitive subgroup of $\AGL_5(2)$ has one of these forms, a contradiction.

\textbf{Case 6.} $n=6$ or $8$, and $p=2$. In this case, $R$ is a metacyclic subgroup of $\AGL_n(2)$ with order divisible by $2^n$. If $n=6$, then $\AGL_6(2)\leqslant\GL_7(2)<\GL_8(2)$ implies that $\GL_8(2)$ has a metacyclic subgroup of order divisible by $2^6$. If $n=8$, then $R\cap\Soc(G)\leqslant\Z_2^2$ implies that the subgroup $R\Soc(G)/\Soc(G)\cong R/R\cap\Soc(G)$ of $\GL_8(2)$ has order divisible by $2^6$. Thus we always have a subgroup $\overline{R}$ of $\GL_8(2)$ whose order is divisible by $2^6$. Since the largest order of a $2$-element in $\GL_8(2)$ equals $8$ by Lemma \ref{Exponent}, we deduce that $\overline{R}=\Z_8{:}\Z_8$. However, computation in \magma~\cite{magma} shows that $\GL_8(2)$ has no subgroup of form $\Z_8{:}\Z_8$, a contradiction.
\end{proof}

\section{Diagonal type}

In this section we determine quasiprimitive permutation groups of diagonal type with a metacyclic transitive subgroup. As we mentioned in the introduction section, quasiprimitive permutation groups of diagonal type include types holomorph simple and simple diagonal.

Recall that for a group $G$, the \emph{holomorph} of $G$, denoted by $\Hol(G)$, is the normalizer of the right regular representation of $G$ in $\Sym(G)$, and has the structure $G{:}\Aut(G)$.

\begin{lemma}\label{NormalSemireg}
Let $G\leqslant\Sym(\Ome)$, and $R$ be a metacyclic transitive subgroup of $G$. If $G$ has a semiregular normal subgroup $N\cong T^k$ with nonabelian simple group $T$ and positive integer $k$, then $k=1$ and $T=\PSL_2(p)$ for some prime $p\geqslant5$.
\end{lemma}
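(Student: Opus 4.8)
The plan is to prove the statement in three stages: reduce to the case where $N$ acts regularly, bound $k$ by an arithmetic argument, and then pin down $T$ by producing a factorization of an almost simple group and invoking Theorem~\ref{AS}.

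\textbf{Stage 1: reduction to $N$ regular.} Since $N\trianglelefteq G$ is semiregular, its orbits form a $G$-invariant partition $\calB$ of $\Ome$, and as $R\leqslant G$ is transitive, so is $R$ on $\calB$. Fix $\Del\in\calB$ with $\omega\in\Del$. The block stabiliser $R_\Del$ is transitive on $\Del$, so $R_\Del^\Del$ is a transitive metacyclic subgroup of $\Sym(\Del)$ (being a quotient of $R_\Del\leqslant R$); moreover $N_{(\Del)}\leqslant N_\omega=1$, so $N^\Del\cong N\cong T^k$ acts regularly on $\Del$ and is normal in $(R_\Del N)^\Del=R_\Del^\Del N^\Del$. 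Replacing $(G,R,N,\Ome)$ by $((R_\Del N)^\Del,R_\Del^\Del,N^\Del,\Del)$, we may thus assume that $N=T^k$ is regular, that $\Ome$ is identified with $N$ (so $|\Ome|=|T|^k$), that $G=RN\leqslant\Hol(N)=N{:}(\Aut(T)\wr\Sy_k)$, and that $G_\omega\cong R/(R\cap N)$ is metacyclic.

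\textbf{Stage 2: bounding $k$.} Put $C=R\cap N$. Since $N\trianglelefteq G$ we have $C\trianglelefteq R$, so $C$ is metacyclic and $|C|\di|T|^{\min\{k,2\}}$ by Lemma~\ref{DirectProduct} (trivially for $k=1$). As $N$ is regular, $|T|^k=|N|$ divides $|R|=|C|\,|R/C|$, whence $|T|^{k-2}\di|R/C|$ for $k\geqslant2$; and $R/C\cong RN/N$ is a metacyclic subgroup of $\Aut(T)\wr\Sy_k$. Intersecting with the base group $\Aut(T)^k$, Lemma~\ref{DirectProduct} bounds $|(R/C)\cap\Aut(T)^k|$ by $|\Aut(T)|^2$, while the image of $R/C$ in $\Sy_k$ is a metacyclic, hence two-generated, subgroup of $\Sy_k$ and so has order at most $g(k)^2$, where $g(k)$ is the largest order of an element of $\Sy_k$. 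Therefore $|T|^{k-2}$ divides $|\Aut(T)|^2g(k)^2=|T|^2|\Out(T)|^2g(k)^2$, and using $|\Out(T)|^2<|T|$ for every nonabelian simple $T$ together with $|T|\geqslant60$ one gets $60^{\,k-5}<g(k)^2$, which forces $k\leqslant5$. For each $k$ with $2\leqslant k\leqslant5$ the finitely many surviving possibilities are excluded by a variant of the argument below (or by direct computation), so we may assume $k=1$ and $N=T$.

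\textbf{Stage 3: identifying $T$.} Consider $\C_G(T)$. As $T$ is regular, $\C_{\Sym(\Ome)}(T)$ is a copy of $T$, so $\C_G(T)\trianglelefteq G$ embeds into $G/T\cong G_\omega$ and is metacyclic, and it is proper in that copy of $T$ since $T$ itself is not metacyclic. If $\C_G(T)=1$, conjugation embeds $G$ in $\Aut(T)$ with $|G{:}T|$ dividing $|\Out(T)|$; then $G$ is almost simple with socle $T$, and $G=R\,G_\omega$ is a factorization with metacyclic factor $R$ and core-free factor $G_\omega$ of order dividing $|\Out(T)|$, which contradicts Theorem~\ref{AS} (no entry there admits a core-free factor of order at most $|\Out(\Soc(G))|$). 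Hence $\C_G(T)\neq1$; passing to the block system formed by its orbits, $T$ acts faithfully and transitively on strictly fewer points, $\C_G(T)$ induces a core-free metacyclic point stabiliser for this $T$-action, and $R$ still induces a transitive metacyclic subgroup. Iterating until the centraliser of $T$ is trivial, we arrive at an almost simple group $G^{\ast}$ with socle $T$ containing a transitive metacyclic subgroup, i.e.\ a factorization $G^{\ast}=R^{\ast}B^{\ast}$ with $R^{\ast}$ metacyclic and $B^{\ast}\cap T$ metacyclic. Running through the list of Theorem~\ref{AS}, the only possibilities in which the socle-part of a factor can be metacyclic have $\Soc(G^{\ast})=\PSL_2(q)$, so $T=\PSL_2(q)$. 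Finally, using that the original $R$ is itself metacyclic of order divisible by $|T|$ and lies in $\Hol(\PSL_2(q))$, an order/Zsigmondy argument on the two-generated structure of $R$ eliminates $q=p^f$ with $f\geqslant2$, leaving $q=p\geqslant5$, as required.

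The step I expect to be the main obstacle is Stage~3: making the iterated passage to an almost simple factorization rigorous — in particular, verifying that the "metacyclic point-stabiliser'' datum survives each reduction and that the process terminates in an almost simple group — and then carrying out the order-theoretic elimination of non-prime $q$ (and of the alternating and small-group entries of Theorem~\ref{AS}), which is where the bulk of the casework resides.
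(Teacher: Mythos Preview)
Your Stage~1 matches the paper and correctly reduces to $N=T^k$ regular with $G=RN\leqslant\Hol(T^k)$ and $G_\omega\cong R/(R\cap N)$ metacyclic. From that point on, however, the argument has two genuine gaps.

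\textbf{Stage 2 does not prove $k=1$.} Your inequality $|T|^{k-2}\mid|\Aut(T)|^2g(k)^2$ gives $k\leqslant5$, but for $k\in\{2,3,4\}$ it imposes no bound on $T$ whatsoever (for $k=2$ it is vacuous; for $k=3,4$ the right side still dominates for every $T$). So the phrase ``the finitely many surviving possibilities'' is false: there are infinitely many pairs $(T,k)$ with $2\leqslant k\leqslant4$ still on the table, and no ``variant of the argument below'' or ``direct computation'' can dispose of infinitely many simple groups. The paper does \emph{not} bound $k$ first; it first pins down $T$ to a finite list and only then bounds $k$ for each $T$ separately via Sylow-exponent estimates.

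\textbf{Stage 3 is internally inconsistent.} You argue that $\C_G(T)=1$ yields a contradiction with Theorem~\ref{AS}, then announce that you will iterate ``until the centraliser of $T$ is trivial'' and read off a factorization. If reaching trivial centraliser is contradictory, the iteration cannot terminate there. Moreover, after one pass to the block system of $\C_G(T)$, the regular copy of $T$ is no longer regular on the new set, so the bound $|G_\omega|\leqslant|\Out(T)|$ you used in the base case no longer applies, and it is unclear what hypothesis is being preserved. The student-side difficulty you flag at the end is exactly the point where the argument breaks.

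\textbf{What the paper does instead.} After your Stage~1 the paper observes, as you do, that both $\overline{R}$ and $M_\omega$ are metacyclic, hence $M=\overline{R}\,M_\omega$ is a product of two \emph{solvable} subgroups. The missing tool is Kazarin's theorem \cite{Kazarin}, which lists the nonabelian composition factors of any finite group expressible as such a product. Since $T$ is a composition factor of $M$, this immediately restricts $T$ to a short explicit list (certain $\PSL$, $\PSU$, $\PSp$, $\M_{11}$). With $T$ fixed, the paper then bounds the exponent of a Sylow $p$-subgroup of $\Hol(T)\wr\Sy_k$ and compares it with $|T|_p^k$ (which must divide $|\overline{R}|$); this simultaneously forces $k=1$ and eliminates every $T$ except $\PSL_2(p)$ with $p$ prime. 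Your attempt to substitute Theorem~\ref{AS} for Kazarin fails because $M$ is not almost simple, and the centraliser gymnastics do not succeed in manufacturing an almost simple group while keeping the needed factorization data intact.
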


\begin{proof}
Take an orbit $\Del$ of $N$ on $\Ome$. Then $N^\Del$ is a regular subgroup of $\Sym(\Del)$ and $N^\Del\cong N$. Since $R$ is a metacyclic transitive group on $\Ome$, the induced permutation group $\overline{R}=R_\Del^\Del$ is also metacyclic and transitive on $\Del$.

Let $M=N^\Del\overline{R}\leqslant\Sym(\Del)$. Note that $N^\Del$ is a regular normal subgroup of $M$. We have $M\leqslant\Hol(N^\Del)\cong\Hol(T)\wr\Sy_k$ and that
\begin{equation}\label{eq3}
|\overline{R}|\text{ is divisible by }|T|^k.
\end{equation}
Moreover, for a point $\omega$ in $\Del$, we have $M=\overline{R}M_\omega$ and $M=N^\Del{:}M_\omega$ as $\overline{R}$ and $N^\Del$ are both transitive on $\Del$. It follows that $M_\omega\cong M/N^\Del=N^\Del\overline{R}/N^\Del\cong\overline{R}/\overline{R}\cap N^\Del$
is metacyclic. Now $M=\overline{R}M_\omega$ is a product of two metacyclic groups, so by \cite{Kazarin}, $T=\PSU_3(8)$, $\PSp_4(3)$, $\PSL_4(2)$, $\M_{11}$, $\PSL_3(q)$ with $q\in\{3,4,5,7,8\}$ or $\PSL_2(q)$ with prime power $q\geqslant5$.

\textbf{Case 1.} $T=\PSU_3(8)$. Note that the exponent of the Sylow $2$-subgroup of $\PSU_3(8)$ equals $4$, and the Sylow $2$-subgroup of $\Out(\PSU_3(8))$ is $\Z_2$. We see that the exponent of the Sylow $2$-subgroup of $\Hol(T)=\PSU_3(8)^2.\Out(\PSU_3(8))$ divides $4\cdot2=8$, and so does that of $\Hol(T)^k$. Observe that a $2$-element of $\Sy_k$ has order at most $k$. We then conclude that a $2$-element of $\Hol(T)\wr\Sy_k=\Hol(T)^k.\Sy_k$ has order at most $8k$, and hence the Sylow $2$-subgroup of $\overline{R}$ has order at most $(8k)^2$. However, the Sylow $2$-subgroup of $\overline{R}$ has order divisible by $|\PSU_3(8)|_2^k=2^{9k}$ due to (\ref{eq3}). Therefore we obtain that $2^{9k}\leqslant(8k)^2$, which is not possible.

\textbf{Case 2.} $T=\PSp_4(3)$. Note that the exponent of the Sylow $2$-subgroup of $\PSp_4(3)$ equals $4$, and $\Out(\PSp_4(3))=\Z_2$. Similarly as the previous case we see that the exponent of the Sylow $2$-subgroup of $\Hol(T)=\PSp_4(3)^2.\Out(\PSp_4(3))$ divides $4\cdot2=8$, and thus the Sylow $2$-subgroup of $\overline{R}$ has order at most $(8k)^2$. By (\ref{eq3}), the Sylow $2$-subgroup of $\overline{R}$ has order divisible by $|\PSp_4(3)|_2^k=2^{6k}$. Then we obtain the inequality $2^{6k}\leqslant(8k)^2$, which forces $k=1$. Computation in \magma~\cite{magma} shows that the maximal order of metacyclic subgroups in $\PSp_4(3)$ is $36$. Hence we deduce that $|\overline{R}|\leqslant36^2|\Out(\PSp_4(3))|=2592$. This violates (\ref{eq3}) as $|T|=|\PSp_4(3)|=25920$.

\textbf{Case 3.} $T=\PSL_4(2)$. Note that the exponent of the Sylow $2$-subgroup of $\PSL_4(2)$ equals $4$, and $\Out(\PSL_4(2))=\Z_2$. We conclude as above that the exponent of the Sylow $2$-subgroup of $\Hol(T)=\PSL_4(2)^2.\Out(\PSL_4(2))$ divides $8$, and thus the Sylow $2$-subgroup of $\overline{R}$ has order at most $(8k)^2$. By (\ref{eq3}), the Sylow $2$-subgroup of $\overline{R}$ has order divisible by $|\PSL_4(2)|_2^k=2^{6k}$. Then we obtain the inequality $2^{6k}\leqslant(8k)^2$, which forces $k=1$. Computation in \magma~\cite{magma} shows that the maximal order of metacyclic subgroups in $\PSL_4(2)$ is $60$. Hence $|\overline{R}|\leqslant60^2|\Out(\PSL_4(2))|=7200$, contradicting (\ref{eq3}) as $|T|=20160$.

\textbf{Case 4.} $T=\M_{11}$. In this case, $\Hol(T)\cong T^2$ since $\Out(T)=1$. Note that the exponent of the Sylow $3$-subgroup of $\M_{11}$ is $3$. We conclude from $\overline{R}\leqslant M\lesssim\Hol(T)\wr\Sy_k=\Hol(T)^k.\Sy_k$ that the Sylow $3$-subgroup of $\overline{R}$ has order at most $(3k)^2$, and thus $3^{2k}=|\M_{11}|_3^k\leqslant(3k)^2$ by (\ref{eq3}). Consequently, $k=1$ and $\overline{R}\leqslant \Hol(T)\cong T^2$. Computation in \magma~\cite{magma} shows that the maximal order of metacyclic subgroups in $\M_{11}$ is $55$. Hence $|\overline{R}|\leqslant55^2=3025$, contradicting (\ref{eq3}) as $|T|=7920$.

\textbf{Case 5.} $T=\PSL_3(q)$ with $q\in\{3,4,5,7,8\}$. Let $q=p^f$ with $p$ prime. First assume $q\in\{3,5,7\}$. As $p=q\geqslant3$, we deduce from Lemma \ref{Exponent} that the Sylow $p$-subgroup of $T$ has exponent $p$. Then it follows from $|\Out(T)|_p=1$ that the exponent of the Sylow $p$-subgroup of $\Hol(T)$ is $p$. Since $\overline{R}\lesssim\Hol(T)\wr\Sy_k=\Hol(T)^k.\Sy_k$, we conclude that the Sylow $p$-subgroup of $\overline{R}$ has order at most $(pk)^2$. However, (\ref{eq3}) shows that the Sylow $p$-subgroup of $\overline{R}$ has order divisible by $|T|_p^k=p^{3k}$. Thereby we have $p^{3k}\leqslant(pk)^2$, which yields $3^{3k-2}\leqslant p^{3k-2}\leqslant k^2$, a contradiction.

Next assume $q=2^f$ with $f=2$ or $3$. Then the Sylow $2$-subgroup of $T$ has exponent $4$ by Lemma \ref{Exponent}, and the exponent of the Sylow $2$-subgroup of $|\Out(T)|$ is $2$. Hence every $2$-element in $\Hol(T)$ has order dividing $8$, and so the order of the Sylow $2$-subgroup of $\overline{R}$ is at most $(8k)^2$. Now we deduce from (\ref{eq3}) that $2^{3fk}=|T|_2^k\leqslant(8k)^2$. This forces $f=2$ and $k=1$, whence $\overline{R}\leqslant\PSL_3(4)^2.\Out(\PSL_3(4))$. However, the maximal order of metacyclic subgroups in $\PSL_3(4)$ is $21$ according to computation in \magma~\cite{magma}. Thus $|\overline{R}|\leqslant21^2|\Out(\PSL_3(4))|=5292$, contradicting (\ref{eq3}) as $|T|=|\PSL_3(4)|=20160$.

\textbf{Case 6.} $T=\PSL_2(q)$ with $q=p^f\geqslant5$ for prime $p$. The exponent of the Sylow $p$-subgroup of $T$ is $p$, and the exponent of the Sylow $p$-subgroup of $\Out(T)$ is $f_p$, the $p$-part of $f$. Hence every $p$-element in $\overline{R}\lesssim\Hol(T)^k.\Sy_k$ has order at most $pf_pk$, and so the Sylow $p$-subgroup of $\overline{R}$ has order at most $(pf_pk)^2$. Thereby we conclude from (\ref{eq3}) and the fact $|T|_p^k=p^{fk}$ that
\begin{equation}\label{eq6}
p^{fk}\leqslant(pf_pk)^2.
\end{equation}
If $(k,f)=(1,1)$, then the lemma already holds. To finish the proof, we exclude the other values of $(k,f)$.

First suppose that $(k,f)=(1,2)$. Then $p\geqslant3$, $N^\Del\cong T=\PSL_2(p^2)$ and $M_\omega\cong M/N^\Del\lesssim\Aut(T)=\PSL_2(p^2).\Z_2^2$. Since $M_\omega\cong\overline{R}/\overline{R}\cap N^\Del$, we deduce from (\ref{eq3}) that $|T|$ divides $|\overline{R}|=|\overline{R}\cap N^\Del||M_\omega|$, and hence
\begin{equation}\label{eq7}
4|\overline{R}\cap N^\Del||M_\omega\cap T|\text{ is divisible by }|\PSL_2(p^2)|.
\end{equation}
Computation in \magma~\cite{magma} shows that the maximal order of metacyclic subgroups in $\Hol(\PSL_2(9))$ is $180$, contrary to (\ref{eq3}). Thus we have $p>3$. Then by the classification of subgroups of the two-dimensional projective special linear group (see for example \cite{huppert1967endliche}), we know that every subgroup of $\PSL_2(p^2)$ with order divisible by $p$ has order dividing $p^2(p^2-1)$. If both $|\overline{R}\cap N^\Del|$ and $|M_\omega\cap T|$ are divisible by $p$, then (\ref{eq7}) implies that $|\PSL_2(p^2)|$ divides $(p^2(p^2-1))^2$, which turns out to be a contradiction to that $p^2+1$ divides $8(p^2-1)$. Accordingly, exactly one of $|\overline{R}\cap N^\Del|$ and $|M_\omega\cap T|$ is divisible by $p$, say, $|\overline{R}\cap N^\Del|$. Then $|\overline{R}\cap N^\Del|$ is divisible by $p^2$ due to (\ref{eq7}), and it follows that $\Z_p^2\leqslant\overline{R}\cap N^\Del\leqslant\Z_p^2{:}\Z_{(p^2-1)/2}$. Note that the order of an element in $\Z_p^2{:}\Z_{(p^2-1)/2}\leqslant\PSL_2(p^2)$ is either coprime to $p$ or equal to $p$. We conclude $\overline{R}\cap N^\Del=\Z_p^2$ since $\overline{R}\cap N^\Del$ is metacyclic. Therefore, $(\ref{eq7})$ turns out to be $8|M_\omega\cap T|\equiv0\pmod{p^4-1}$. However, inspecting the orders of subgroups of $\PSL_2(p^2)$ we conclude that this is not possible.

Next suppose that $k=1$ and $f\geqslant3$. The exponent of the Sylow $p$-subgroup of $T$ is $p$, and the order of the Sylow $p$-subgroup of $\Out(T)$ divides $f$. Hence we deduce from $\overline{R}\lesssim\Hol(T)=T^2.\Out(T)$ that the Sylow $p$-subgroup of $\overline{R}$ has order dividing $p^2f$, and thus (\ref{eq3}) implies
\begin{equation}\label{eq8}
p^f\di p^2f.
\end{equation}
In particular, $p^3/3\leqslant p^f/f\leqslant p^2$. Therefore, $p=2$ or $3$. Substituting this into (\ref{eq8}) we deduce that $(p,f)=(2,4)$ or $(3,3)$. Computation in \magma~\cite{magma} shows that there are no metacyclic transitive subgroup in $\Hol(\PSL_2(2^4))$. Thus $(p,f)=(3,3)$, and $\Hol(T)=T{:}\Aut(T)=\PSL_2(3^3){:}(\PSL_2(3^3).\Z_6)$. Since the maximal order of metacyclic subgroups in $\PSL_2(3^3)$ is $28$ according to computation in \magma~\cite{magma}, we conclude that a metacyclic subgroup of $\Hol(T)$ has order at most $28^2\cdot6=4704<9828=|T|$, violating (\ref{eq3}).

Now suppose that $k=2$. The exponent of the Sylow $p$-subgroup of $T$ is $p$, and the order of the Sylow $p$-subgroup of $\Out(T)$ divides $f$. Hence the exponent of the Sylow $p$-subgroup of $\Hol(T)$ divides $pf$, and the same holds for the exponent of the Sylow $p$-subgroup of $\Hol(T)^2$. Since $\overline{R}\lesssim\Hol(T)^2.\Sy_2$, we deduce from (\ref{eq3}) that
\begin{equation}\label{eq6}
p^{2f}\di2(pf)^2.
\end{equation}
If $p=2$, then the assumption $p^f\geqslant5$ indicates $f\geqslant3$, under which (\ref{eq6}) has no solution. Hence we have $p\geqslant3$. If $f\geqslant2$, then (\ref{eq6}) implies that $p^4/4\leqslant p^{2f}/f^2\leqslant2p^2$, a contradiction. Consequently, $f=1$, and so $p\geqslant5$. For $p=5$ or $7$, computation in \magma~\cite{magma} shows that $\Hol(\PSL_2(p)^2)$ has no metacyclic transitive subgroup. Thus we conclude that $q=p\geqslant11$. Let $K=\Soc(\Hol(N^\Del))=T_1\times T_2\times T_3\times T_4$ and $S=\overline{R}\cap K$, where $T_i\cong T=\PSL_2(p)$ for $1\leqslant i\leqslant4$. Since $|\overline{R}/S|$ divides $8$, we deduce from (\ref{eq3}) that
\begin{equation}\label{eq11}
8|S|\text{ is divisible by }p^2(p^2-1)^2/4.
\end{equation}
For distinct $i$ and $j$ in $\{1,2,3,4\}$, denote by $\pi_i$ the projection from $K$ to $T_i$ and $\pi_{i,j}$ the projection from $K$ to $T_i\times T_j$. Since $S$ is metacyclic, one can write $S=\langle g,h\rangle$ with $\langle g\rangle\unlhd S$. As $p^2$ divides $|S|$, $p^2$ must divide $o(g)o(h)$. It follows that both $o(g)$ and $o(h)$ are divisible by $p$ since the Sylow $p$-subgroup of $K$ is $\Z_p$. Without loss of generality, assume that $p\di o(\pi_1(h))$. Then $\pi_1(S)=\Z_p$ since $\langle\pi_1(g)\rangle\trianglelefteq\pi_1(S)=\langle\pi_1(g),\pi_1(h)\rangle$. As $p^2$ divides $\prod_{i=1}^4|\pi_i(S)|$, we know that at least one of $|\pi_2(S)|$, $|\pi_3(S)|$ and $|\pi_4(S)|$ is divisible by $p$, say $|\pi_2(S)|$. Then $\pi_2(S)\leqslant\Z_p{:}\Z_{(p-1)/2}$, and thereby we conclude from (\ref{eq11}) that
\begin{equation}\label{eq12}
|\pi_{3,4}(S)|\text{ is divisible by }(p-1)(p+1)^2/16.
\end{equation}
For $p=11$, $13$ or $17$, computation in \magma~\cite{magma} shows that there is no metacyclic subgroup in $\PSL_2(p)^2$ of order divisible by $(p-1)(p+1)^2/16$, contradicting (\ref{eq12}). Thus $p\geqslant19$. Now since the $p'$-part of any metacyclic subgroup in $\PSL_2(p)$ is at most $p+1$, we have $|\pi_3(S)|_{p'}|\pi_4(S)|_{p'}\leqslant(p+1)^2<(p-1)(p+1)^2/16$, contradicting (\ref{eq12}) again.

Finally suppose that $k\geqslant3$. The exponent of the Sylow $p$-subgroup of $T$ is $p$, and the exponent of the Sylow $p$-subgroup of $\Out(T)$ divides $f$. Moreover, the order of a $p$-element in $\Sy_k$ divides $p^{\lfloor\log_pk\rfloor}$. Hence each $p$-element in $\overline{R}\lesssim\Hol(T)^k.\Sy_k$ has order dividing $pfp^{\lfloor\log_pk\rfloor}$, and so the Sylow $p$-subgroup of $\overline{R}$ has order at most $(pfp^{\lfloor\log_pk\rfloor})^2$. Thereby we conclude from (\ref{eq3}) and the fact $|T|_p^k=p^{fk}$ that
\begin{equation}\label{eq9}
p^{fk}\di(pfp^{\lfloor\log_pk\rfloor})^2.
\end{equation}
As a consequence,
\begin{equation}\label{eq10}
p^{fk}\leqslant(pfk)^2,
\end{equation}
and so $p^3/9\leqslant p^{fk}/(fk)^2\leqslant p^2$. This gives $p\in\{2,3,5,7\}$. If $p=2$, then the assumption $p^f\geqslant5$ indicates $f\geqslant3$, contrary to (\ref{eq10}). In the same way $p=3$ causes a contradiction. If $p=5$ or $7$ then we have $f=1$ by (\ref{eq10}), but (\ref{eq9}) has no solution for $k$ if $(p,f)=(5,1)$ or $(7,1)$. Thus the proof is completed.
\end{proof}

We remark that, if $G$ is a quasiprimitive permutation group containing a transitive metacyclic subgroup, then
Lemma~$\ref{NormalSemireg}$ in particular shows that $G$ is not of type twisted wreath or holomorph compound.

\begin{lemma}\label{LemmaSD}
Let $G\leqslant\Sym(\Ome)$ and $R$ be a metacyclic transitive subgroup of $G$. If $G$ is quasiprimitive of simple diagonal type with socle $K=T_1\times\dots\times T_k$, where $k\geqslant2$ and $T_1\cong\dots\cong T_k\cong T$ is nonabelian simple, then $k=2$.
\end{lemma}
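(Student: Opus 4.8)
The plan is to put $R$ inside the maximal quasiprimitive permutation group of simple diagonal type with socle $K$, namely $W=K.(\Out(T)\times\Sy_k)$ acting on $\Om\cong T^{k-1}$ (with $\Out(T)$ diagonal and $\Sy_k$ permuting the $k$ simple direct factors $T_1,\dots,T_k$ of $K$); since $R$ is transitive, $|\Om|=|T|^{k-1}$ divides $|R|$. Put $A=R\cap K\trianglelefteq R$ and $\overline R=RK/K\leqslant\Out(T)\times\Sy_k$; both are metacyclic, being respectively a subgroup and a quotient of $R$, and by Lemma~\ref{DirectProduct} we have $|A|\di|T|^2$. As $|T|^{k-1}$ divides $|A||\overline R|$, this forces $|\overline R|\geqslant|T|^{k-3}$, and the whole proof amounts to showing that such a large metacyclic $\overline R\leqslant\Out(T)\times\Sy_k$ cannot exist once $k\geqslant3$.

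First I would show that $\overline R$ acts transitively on $\{T_1,\dots,T_k\}$ when $k\geqslant3$. If not, let $J$ be any $\overline R$-orbit on the factors; then $J\subsetneq\{1,\dots,k\}$, and $N_J=\prod_{i\in J}T_i\cong T^{|J|}$ is normalised by $K$ and by $R$, hence normal in $\langle R,N_J\rangle$, while a direct computation in the coordinate model shows $N_J$ is semiregular on $\Om$. By Lemma~\ref{NormalSemireg} (with $\langle R,N_J\rangle$, $R$, $N_J$ in the roles there) we get $|J|=1$; thus $\overline R$ has trivial image in $\Sy_k$, $R$ normalises $N=T_1\times\cdots\times T_{k-1}\cong T^{k-1}$, which is regular on $\Om$, and Lemma~\ref{NormalSemireg} now gives $k-1=1$, against $k\geqslant3$. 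Hence $\overline R$ is transitive on $k$ points.

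Next, let $C$ be the image in $\overline R$ of a cyclic normal subgroup of $R$ realising its metacyclic structure, so $C\trianglelefteq\overline R$ is cyclic with $\overline R/C$ cyclic. By Lemma~\ref{NormalCyclic}, $C$ is semiregular on the $k$ points, so $|C|\di k$; and $\overline R/C$, being a cyclic quotient of a subgroup of $\Out(T)\times\Sy_k$, has order dividing the order of some element of $\Out(T)\times\Sy_k$, hence dividing $|\Out(T)|\nu_k$, where $\nu_k$ is the largest order of an element of $\Sy_k$. Thus $|\overline R|\leqslant k|\Out(T)|\nu_k$, so $|T|^{k-3}\leqslant k|\Out(T)|\nu_k$. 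Using the fact that $|T|\geqslant 30|\Out(T)|$ for every finite nonabelian simple $T$ (equality only for $\A_5$), together with the sub-exponential growth of $\nu_k$, one checks that $30^{k-3}\leqslant k\nu_k$ fails for all $k\geqslant4$; hence $k\leqslant3$.

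It remains to exclude $k=3$, and I expect this to be the genuine obstacle, since the order count above becomes vacuous there. Being transitive on $\{1,2,3\}$, $\overline R$ contains the image of some $g\in R$ with $\overline g=(1\,2\,3)$; conjugation by $g$ induces isomorphisms $T_1\to T_2\to T_3\to T_1$ and stabilises $A\trianglelefteq R$, so the three coordinate projections of $A\leqslant T_1\times T_2\times T_3$ are pairwise isomorphic. Writing $X=\pi_1(A)$ — a metacyclic group, and a \emph{proper} subgroup of $T$ since $T$ is not metacyclic — we get $A\lesssim X^3$, so $|A|\di|X|^2$ by Lemma~\ref{DirectProduct}; on the other hand $|\overline R|\di 6|\Out(T)|$, so $|A|\geqslant|T|^2/(6|\Out(T)|)$. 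Combining yields $[T:X]^2\leqslant 6|\Out(T)|$, while $[T:X]$ is at least the minimal index of a proper subgroup of $T$, which exceeds $\sqrt{6|\Out(T)|}$ for every nonabelian simple $T$ (it is at least $5$, which already settles the case $|\Out(T)|\leqslant4$, and the remaining groups are handled by a routine check) — a contradiction. Therefore $k=2$.
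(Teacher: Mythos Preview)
Your overall route is sound and genuinely different from the paper's, but there is one real slip in the reduction to $k\leqslant3$.

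\textbf{The gap.} In your second paragraph you invoke Lemma~\ref{NormalCyclic} to conclude that the cyclic normal subgroup $C\trianglelefteq\overline R$ is semiregular on the $k$ factors, and hence $|C|\mid k$. But Lemma~\ref{NormalCyclic} is stated for $G\leqslant\Sym(\Omega)$, i.e.\ for a \emph{faithful} action, whereas the action of $\overline R\leqslant\Out(T)\times\Sy_k$ on $\{T_1,\dots,T_k\}$ factors through the projection to $\Sy_k$ and has kernel $\overline R\cap(\Out(T)\times1)$, which can be nontrivial. What the lemma actually gives is that the image of $C$ in $\Sy_k$ is semiregular, so only $|C|\mid k\,|\Out(T)|$. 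This weakens your bound to $|\overline R|\leqslant k\,\nu_k\,|\Out(T)|^2$, and for $k=4$, $T=\A_5$ that reads $60\leqslant 4\cdot4\cdot4=64$, which is \emph{not} a contradiction. So the argument as written does not close out $k=4$. The easiest repair is to abandon the refined estimate and use the trivial bound $|\overline R|\mid k!\,|\Out(T)|$: then $|T|^{k-3}\mid k!\,|\Out(T)|$, and choosing (as the paper does, via~\cite[Corollary~6]{LPS-subgps}) a prime $p\geqslant5$ dividing $|T|$ but not $|\Out(T)|$, a comparison of $p$-parts gives $(k-3)\log_p|T|_p<k/(p-1)\leqslant k/4$, hence $k\leqslant3$. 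Note that this repair makes your transitivity step (which is correct and elegant) unnecessary for bounding $k$; it is still needed for $k=3$.

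\textbf{Comparison for $k=3$.} Here your approach and the paper's diverge. The paper observes that transitivity of $R$ on $\{T_1,T_2,T_3\}$ forces $R\cap T_1\cong R\cap T_2\cong R\cap T_3$, and since their direct product sits inside the metacyclic group $R$ one must have $R\cap T_1=1$; embedding $R\cap K$ into $K/T_1$ and using the same prime $p\geqslant5$ then yields the contradiction. Your argument instead bounds the projections $X=\pi_i(A)$ to obtain $[T:X]^2\leqslant6|\Out(T)|$ and appeals to the minimal index of a proper subgroup of $T$. This is valid, but the ``routine check'' that $m(T)^2>6|\Out(T)|$ when $|\Out(T)|>4$ covers infinitely many groups (all $\PSL_n(q)$ with large $f$, for instance) and deserves at least a one-line justification along the lines of $m(\PSL_n(q))\geqslant(q^n-1)/(q-1)$ versus $|\Out|\leqslant2nf$. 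Your method has the virtue of avoiding the cited prime result, at the cost of this extra verification.
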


\begin{proof}
Note that $G$ acts on $\calT=\{T_1,\dots,T_k\}$ by conjugation, $|\Ome|=|T|^{k-1}$ and $G/K\leqslant\Out(T)\times\Sy_k$. By Lemma \ref{DirectProduct}, $|R\cap K|$ divides $|T|^2$, whence $|R|$ divides $k!|\Out(T)||T|^2$. Since $|R|$ is divisible by $|\Ome|$, we conclude that
\begin{equation}\label{eq13}
k!|\Out(T)||T|^2\text{ is divisible by }|T|^{k-1}.
\end{equation}
By~\cite[Corollary~6]{LPS-subgps}, there is a prime $p\geqslant5$ such that $p$ divides $|T|$ but does not divide $|\Out(T)|$. Hence it follows from (\ref{eq13}) that $|T|_p^{k-1}$ divides $(k!)_p|T|_p^2$. Since
\[
\log_p(k!)_p=\sum\limits_{i=1}^\infty\left\lfloor\frac{k}{p^i}\right\rfloor<\sum\limits_{i=1}^\infty\frac{k}{p^i}=\frac{k}{p-1},
\]
this implies that $\log_p|T|_p^{k-3}<k/(p-1)$. We thereby obtain $k-3<k/(p-1)\leqslant k/4$, and thus $k\leqslant3$. To complete the proof, it suffices to exclude the case $k=3$.

Suppose for a contradiction that $k=3$. If the action of $R$ on $\calT$ by conjugation fixes some point, say $T_1$, then $T_2\times T_3$ is normal in $KR$ and semiregular on $\Ome$, which is contrary to Lemma \ref{NormalSemireg}. Hence $R$ induces a transitive action on $\calT$ by conjugation, and so $R\cap T_1\cong R\cap T_2\cong R\cap T_3$. Since $(R\cap T_1)\times(R\cap T_2)\times(R\cap T_3)\leqslant R$ and $R$ is metacyclic, we conclude that $R\cap T_1=1$. Therefore, $R\cap K\cong(R\cap K)T_1/T_1\leqslant K/T_1$.
As a consequence, $|T|=|T_1|$ divides $|K|/|R\cap K|=|KR/K|$ and thus divides $|\Out(T)\times\Sy_3|=6|\Out(T)|$.
This is impossible since $p\geqslant5$ divides $|T|$ but does not divide $|\Out(T)|$.
\end{proof}

\begin{theorem}\label{Diagonal}
Let $G\leqslant\Sym(\Ome)$, and $R$ be a metacyclic transitive subgroup of $G$. Suppose that $G$ is quasiprimitive of diagonal type. Then $G$ is primitive, $\Soc(G)=\PSL_2(p)^2$ for some prime $p\geqslant5$, and $R=\Z_{p(p+1)/2}.\Z_{p-1}$. Further, if $p\equiv1\pmod{4}$ then $\Hol(\PSL_2(p))\leqslant G\leqslant\PSL_2(p)^2.\Z_2^2$; if $p\equiv3\pmod{4}$ then $\PSL_2(p)^2\leqslant G\leqslant\PSL_2(p)^2.\Z_2^2$.
\end{theorem}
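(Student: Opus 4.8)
The plan is to reduce, via the lemmas of this section, to the case $\Soc(G)\cong T^2$ with $T=\PSL_2(p)$, and then to pin down $R$ and the possible $G$ using Dickson's classification of subgroups of $\PSL_2(p)$. By the remark following Lemma~\ref{NormalSemireg}, $G$ is not of twisted wreath or holomorph compound type, so being of diagonal type it is of holomorph simple or simple diagonal type; write $\Soc(G)=T_1\times\cdots\times T_k$ with each $T_i\cong T$ nonabelian simple and $|\Ome|=|T|^{k-1}$. In the simple diagonal case Lemma~\ref{LemmaSD} gives $k=2$, while the holomorph simple case has $k=2$ by definition; in either case $G$ is primitive and each $T_i$ is a regular subgroup of $G$. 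If $R$ normalises $T_1$ (which always happens in the holomorph simple case), then $T_1\cong T$ is a regular --- hence semiregular --- normal subgroup of the transitive group $\langle\Soc(G),R\rangle$, so Lemma~\ref{NormalSemireg} forces $T=\PSL_2(p)$ for a prime $p\ge5$. If instead $R$ interchanges $T_1$ and $T_2$, I would pass to $\mathrm{N}_G(T_1)$, which has index $2$ in $G$, contains $\Soc(G)$, and is quasiprimitive of holomorph simple type, and in which $R\cap\mathrm{N}_G(T_1)$ is metacyclic of index $2$ in $R$; showing this subgroup to be still transitive (the residual case being excluded directly) reduces matters to the previous situation and again gives $T=\PSL_2(p)$, $p\ge5$ prime.

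Next, with $T=\PSL_2(p)$ fixed, $|\Ome|=|T|=p(p^2-1)/2$, and since $\Out(T)=\Z_2$ and $G/\Soc(G)$ embeds in $\Out(T)\times\Sy_2$ we get $\Soc(G)\le G\le\Soc(G).\Z_2^2$. Put $S=R\cap\Soc(G)$; since $R$ is transitive, $|T|$ divides $|R|$, while by Lemma~\ref{DirectProduct} $|S|$ divides $|T|^2$ and $|R/S|$ divides $|G/\Soc(G)|$, hence $4$, so $S$ is a metacyclic subgroup of $\PSL_2(p)^2$ with $|S|\ge|T|/4$. A comparison of orders using Dickson's classification of subgroups of $\PSL_2(p)$ applied to the coordinate projections $\pi_1(S),\pi_2(S)$ forces, up to interchanging coordinates, $\pi_1(S)$ to contain the unipotent subgroup $\Z_p$ and hence $\pi_1(S)\le\Z_p{:}\Z_{(p-1)/2}$; together with the metacyclicity of $S$, the requirement that $R$ act transitively, and the constraint coming from the diagonal point stabiliser $D$, analysing how $S$ embeds in $\pi_1(S)\times\pi_2(S)$ then pins down $S$, forces $R$ to be regular of order $|T|$, identifies $\pi_2(S)$ with a dihedral group $\D_{p+1}$, and yields $R\cong\Z_{p(p+1)/2}.\Z_{p-1}$ with cyclic normal subgroup $\Z_p\times\Z_{(p+1)/2}$ and cyclic quotient $\Z_{p-1}$. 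The finitely many small primes, where Dickson's list contains the exceptional subgroups $\A_4$, $\Sy_4$, $\A_5$, would be settled by computation in \magma.

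Finally, the congruence condition on $p$ should emerge from where the quotient $\Z_{p-1}$ of $R$ can sit. Inside $\Soc(G)=\PSL_2(p)^2$ the natural candidate is $(\Z_p{:}\Z_{(p-1)/2})\times\D_{p+1}$, which has order $|T|$ and --- because $\gcd(p(p-1)/2,\,p+1)=1$ when $p\equiv3\pmod4$ --- meets the diagonal trivially; its cyclic normal subgroup $\Z_p\times\Z_{(p+1)/2}\cong\Z_{p(p+1)/2}$ has quotient $\Z_{(p-1)/2}\times\Z_2$, which is cyclic, so that this group is metacyclic, exactly when $(p-1)/2$ is odd, i.e.\ $p\equiv3\pmod4$. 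Hence for $p\equiv3\pmod4$ a transitive metacyclic $R$ already lies in $\Soc(G)$ and every $G$ with $\PSL_2(p)^2\le G\le\PSL_2(p)^2.\Z_2^2$ admits one; for $p\equiv1\pmod4$ one instead needs a genuinely cyclic group of order $p-1$ normalising $\Z_{p(p+1)/2}$, which is available only through a diagonal automorphism and forces $\Hol(\PSL_2(p))\le G$, and conversely every such $G$ is checked to admit a suitable $R$. I expect the technical heart of the proof to be this second step --- the bookkeeping with subgroups of $\PSL_2(p)^2$ and of $\Hol(\PSL_2(p))$, in particular excluding metacyclic transitive subgroups of other orders or structures --- together with the factor-interchanging subcase in the first step.
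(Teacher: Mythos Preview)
Your outline tracks the paper's proof closely: reduce to $k=2$ and $T=\PSL_2(p)$ via the two lemmas, then analyse $S=R\cap\Soc(G)$ through its coordinate projections using Dickson's list, and split on $p\bmod 4$ at the end. The one substantive difference is your treatment of the factor-interchanging subcase. You try to reduce to $\mathrm{N}_G(T_1)$ via $R_0=R\cap\mathrm{N}_G(T_1)$, leaving the situation where $R_0$ has two orbits as an unspecified ``residual''. The paper instead excludes swapping directly: if some $g\in R$ interchanges $T_1$ and $T_2$ then $\pi_1(S)\cong\pi_2(S)$, so $|\pi_1(S)|^2$ is divisible by $|T|/4$; in particular $p\mid|\pi_1(S)|$, forcing $\pi_1(S)\le\Z_p{:}\Z_{(p-1)/2}$, and then $p(p^2-1)/8\mid p^2(p-1)^2/4$ yields $(p+1)\mid 2(p-1)$, impossible for $p\ge5$. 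This is shorter and, as a bonus, gives $R\le\Hol(T_1)$ and hence $|R/S|\le2$ rather than your $|R/S|\mid4$, so that $S$ has index at most $2$ in $\Pa_1\times\D_{p+1}$ and the endgame is correspondingly cleaner. (Strictly speaking the paper's exclusion argument already uses $T=\PSL_2(p)$, so the order of deductions there is as delicate as in your ``residual'' remark; either way one extra line---running Kazarin on $R_0\cdot(T_1R_0)_\omega$, which still contains $T_1$ and is a product of two metacyclic groups---closes the loop.)

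A minor point: once $S\le\Pa_1\times\D_{p+1}$ with index at most $2$, the paper's case split on $p\bmod 4$ is uniform in $p\ge5$; no separate \magma\ check for the small primes admitting $\A_4$, $\Sy_4$, $\A_5$ is needed.
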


\begin{proof}
Write $K=\Soc(G)$. By Lemmas \ref{NormalSemireg} and \ref{LemmaSD}, $K=T_1\times T_2$ with $T_1\cong T_2=\PSL_2(p)$ for prime $p\geqslant5$, and $|\Ome|=|\PSL_2(p)|$. This implies that $G$ is primitive, as desired. Let $H$ be the diagonal subgroup of $K$, which is a point stabilizer of $K$, $S=R\cap K$, and $\pi_i$ be the projection from $K$ to $T_i$ for $i=1,2$. Notice that $|R|$ is divisible by $|\PSL_2(p)|$, and $|R|/|S|=|KR|/|K|$ divides $4$. We deduce that $|S|$ is divisible by $|\PSL_2(p)|/4=p(p^2-1)/8$. Hence $|\pi_1(S)||\pi_2(S)|$ is divisible by $p(p^2-1)/8$ as $S\leqslant\pi_1(S)\times\pi_2(S)$.

Suppose that $T_1^g=T_2$ for some $g\in R$. Then $\pi_1(S)\cong ST_2/T_2=S^gT_1^g/T_1^g\cong ST_1/T_1\cong\pi_2(S)$, and hence $|\pi_1(S)|^2$ is divisible by $p(p^2-1)/8$. In particular, $|\pi_1(S)|$ is divisible by $p$. Since $\pi_1(S)$ is a metacyclic subgroup of $\PSL_2(p)$, we conclude that $|\pi_1(S)|$ divides $p(p-1)/2$. This causes $p(p^2-1)/8\di p^2(p-1)^2/4$, and so $p+1\di2(p-1)$, which is not possible since $p\geqslant5$. Accordingly, the action of $R$ by conjugation is not transitive on $\{T_1,T_2\}$. This means that $R$ normalizes both $T_1$ and $T_2$. As a consequence, $R\leqslant\Hol(T_1)$, and so $|R|/|S|=|KR|/|K|\leqslant|\Hol(T_1)|/|K|=2$. This together with $|R|\equiv0\pmod{|\PSL_2(p)|}$ yields that $|S|$ is divisible by $|\PSL_2(p)|/2=p(p^2-1)/4$. Hence $|\pi_1(S)||\pi_2(S)|$ is divisible by $p(p^2-1)/4$.

Let $\Pa_1=\Z_p{:}\Z_{(p-1)/2}$ be a maximal parabolic subgroup of $\PSL_2(p)$. Since $\pi_1(S)$ and $\pi_2(S)$ are metacyclic subgroups of $\PSL_2(p)$ with the product of their orders divisible by $p(p^2-1)/4$, one concludes that $\pi_i(S)\leqslant\Pa_1$ and $\pi_{3-i}(S)\leqslant\D_{p+1}$ with $i=1$ or $2$. Now $S$ is a subgroup of $\Pa_1\times\D_{p+1}$ of index at most $2$ as $|S|$ is divisible by $p(p^2-1)/4=|\Pa_1||\D_{p+1}|/2$.

First assume $p\equiv1\pmod{4}$. In this case, $\Pa_1\times\D_{p+1}$ is not metacyclic. Hence $S$ is a subgroup of index two in $\Pa_1\times\D_{p+1}$, $|R|/|S|=2$ and $R\nleqslant\Pa_1\times\D_{p+1}$. As a consequence, $R\nleqslant K$, and then $G\geqslant KR=\Hol(\PSL_2(p))$. Since $S$ has index $2$ in $\Pa_1\times\D_{p+1}$, we know that $S=((\Z_p{:}\Z_{(p-1)/4})\times\Z_{(p+1)/2}).\Z_2$. Therefore,
\[
R=S.\Z_2=((\Z_p{:}\Z_{(p-1)/4})\times\Z_{(p+1)/2}).\Z_4=\Z_{p(p+1)/2}{:}\Z_{p-1}.
\]

Next assume $p\equiv3\pmod{4}$. Then it follows from $\gcd(|\Pa_1|,|\D_{p+1}|)=1$ that $(\Pa_1\times\D_{p+1})\cap H=1$, and so $\Pa_1\times\D_{p+1}$ is transitive on $\Ome$. Moreover, $\Pa_1\times\D_{p+1}=\Z_{p(p+1)/2}.\Z_{p-1}$ is metacyclic. If $R=S$, then since $|\Pa_1\times\D_{p+1}|=|\Ome|$ divides $|R|$ and $R\leqslant\Pa_1\times\D_{p+1}$, we have $R=\Pa_1\times\D_{p+1}=\Z_{p(p+1)/2}.\Z_{p-1}$ as the lemma asserts. Assume that $|R|/|S|=2$ hereafter. If $S=\Pa_1\times\D_{p+1}$, then $R=(\Pa_1\times\D_{p+1}){:}\Z_2$ is not metacyclic, a contradiction. Hence $S$ has index $2$ in $\Pa_1\times\D_{p+1}$, which implies that $S=\Pa_1\times\Z_{(p+1)/2}$ or $\Pa_1\times\D_{(p+1)/2}$. Consequently, $R=S.\Z_2=\Z_{p(p+1)/2}.\Z_{p-1}$ as the lemma asserts.
\end{proof}

\section{Proof of Theorem \ref{Quasiprimitive}}

In this section we will complete the proof of Theorem~\ref{Quasiprimitive}.

\begin{lemma}\label{LemmaPA}
Let $H$ be a permutation group on $n$ points with $n\geqslant5$, and $G=H\wr\Sy_k$ be a primitive wreath product with base group $K=H_1\times\dots\times H_k$, where $H_1\cong\dots\cong H_k\cong H$, and
suppose that $G$ has a metacyclic transitive subgroup $R$. Then $k\leqslant2$.
\end{lemma}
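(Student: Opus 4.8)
Throughout, write $\Delta$ for the $n$-element set, so $\Omega=\Delta^{k}$ and $|\Omega|=n^{k}$, and transitivity of $R$ gives $n^{k}\mid|R|$. Since $R\le H\wr\Sy_{k}\le\Sy_{n}\wr\Sy_{k}$ and the estimates below only use this containment, I record two facts about $\Sy_{n}\wr\Sy_{k}$: the exponent of a Sylow $p$-subgroup of $\Sy_{n}$ is $p^{\lfloor\log_{p}n\rfloor}$ (one long $p$-power cycle), and hence a $p$-element of $\Sy_{n}\wr\Sy_{k}$ has order dividing $p^{\lfloor\log_{p}k\rfloor+\lfloor\log_{p}n\rfloor}$ (its image in $\Sy_{k}$ has $p$-power order dividing $p^{\lfloor\log_{p}k\rfloor}$, and the corresponding power lies in the base group $\Sy_{n}^{k}$). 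Let $\theta\colon R\to\Sy_{k}$ be the projection onto the top group, put $S=\Ker\theta=R\cap K$ and $\overline{R}=R\theta$; both $S$ (a subgroup of $R$) and $\overline{R}$ (a quotient of $R$) are metacyclic.

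I would first reduce to the case where $\overline{R}$ is transitive on $\{1,\dots,k\}$, by induction on $k$. If the $\overline{R}$-orbits on $\{1,\dots,k\}$ are $I_{1},\dots,I_{m}$ with $m\ge 2$, then $R$ preserves the decomposition $\Omega=\Delta^{I_{1}}\times\Delta^{I_{2}\cup\dots\cup I_{m}}$ and induces on each factor a metacyclic transitive group lying inside a strictly smaller primitive wreath product over $\Delta$; the inductive hypothesis gives $|I_{1}|\le 2$ and $k-|I_{1}|\le 2$. To upgrade $k\le 4$ to $k\le 2$ here, note that $R$ embeds subdirectly in a product $L_{1}\times\dots\times L_{m}$ of nontrivial metacyclic groups, each transitive on at least $n\ge 5$ points; since every abelian subgroup of a metacyclic group is $2$-generated, iterating the argument of Lemma~\ref{DirectProduct} forces $R\hookrightarrow L_{a}\times L_{b}$ for two indices $a,b$, after which the prime count below (with two coordinate blocks in place of one) rules out $m\ge 2$ combined with $k\ge 3$. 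So assume henceforth that $\overline{R}$ is transitive on $\{1,\dots,k\}$, and suppose for contradiction that $k\ge 3$.

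Write $R=N.C$ with $N$ cyclic normal and $C=R/N$ cyclic. By Lemma~\ref{NormalCyclic}, $N$ is semiregular on $\Omega$, so $|N|\mid n^{k}$; moreover $N\cap K$ is a cyclic normal, hence semiregular, subgroup of $R$ inside the base group, so $|N\cap K|\mid n$ by Lemma~\ref{SemiregCyclic}, while $\overline{N}=N\theta$ is a cyclic normal subgroup of the transitive group $\overline{R}\le\Sy_{k}$, hence semiregular on $\{1,\dots,k\}$ by Lemma~\ref{NormalCyclic} again, so $|\overline{N}|\mid k$. Thus $|N|=|N\cap K|\,|\overline{N}|$ divides $nk$, whence $C=R/N$ is cyclic of order $|R|/|N|\ge n^{k}/(nk)=n^{k-1}/k$. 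On the other hand, since $k\ge 3$ and $\overline{R}$ is transitive on coordinates, the subgroups $S\cap H_{i}$ are pairwise conjugate in $R$; were one of them nontrivial they all would be, and $S$ would contain $\Z_{p}^{k}$ for a prime $p$, contradicting that $S$ is metacyclic. Hence $S\cap H_{i}=1$ for all $i$, so by Lemma~\ref{DirectProduct} we have $|S|\le|\pi_{1}(S)|^{2}$ with $\pi_{1}(S)$ a metacyclic subgroup of $\Sy_{n}$, whose $p$-part is therefore at most $p^{2\lfloor\log_{p}n\rfloor}$. Combining $n^{k}\mid|R|=|S|\,|\overline{R}|$ with the fact that, for each prime $p$, a Sylow $p$-subgroup of the metacyclic group $R$ is metacyclic and so has order at most the square of its exponent, one gets for every prime $p\mid n$ an inequality
\[
k\,v_{p}(n)\ \le\ 2\lfloor\log_{p}k\rfloor+2\lfloor\log_{p}n\rfloor ,
\]
which, played against $|N|\mid nk$ and $C$ being a single cyclic group of order at least $n^{k-1}/k$, is untenable for $k\ge 3$.

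The hard part will be this final implication. A bare order comparison is not decisive, because the base-group contribution involves the a priori unbounded $|H|$; the argument must instead exploit the rigidity that $C=R/N$ is \emph{one} cyclic group of large order while $R$ is metacyclic with small-exponent Sylow subgroups at the primes dividing $n$ (Lemma~\ref{Exponent}). I expect the cleanest route is to study the action of $C$ on the set of $N$-orbits of $\Omega$: $C$ acts transitively there, hence regularly modulo its kernel $C_{0}$, with $|C_{0}|$ equal to the order of a point stabiliser of $R$; playing the resulting structure against Lemmas~\ref{SemiregCyclic} and~\ref{DirectProduct} should force $k\le 2$, with any short residual list of pairs $(n,k)$ cleared by direct computation.
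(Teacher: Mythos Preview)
Your proposal is incomplete, and you correctly identify the gap yourself: the ``hard part'' you leave for the end is genuinely the heart of the argument, and the tools you have assembled do not close it. Your Sylow-exponent inequality
\[
k\,v_{p}(n)\ \le\ 2\lfloor\log_{p}k\rfloor+2\lfloor\log_{p}n\rfloor
\]
is too weak on its own: for instance with $n=6$, $k=3$ it reads $3\le 4$ at $p=3$ and $3\le 6$ at $p=2$, so it does not exclude $k=3$. Your suggested study of the $C$-action on $N$-orbits is not fleshed out, and there is no reason to expect it to produce a bound independent of $|H|$.

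The idea you are missing, and which the paper uses, is to extract a \emph{second} cyclic normal subgroup of $R$ from the metacyclic structure. Write $R=ML$ with $M$ cyclic normal and $L$ cyclic, and set $C=\C_{R}(M)\cap L$. Then $C$ is central in $R$, so $C\cap K$ is again cyclic and normal in $R$, hence semiregular on $\Omega$ and of order dividing $n$ by Lemmas~\ref{NormalCyclic} and~\ref{SemiregCyclic}; likewise $\overline{C}$ is cyclic normal in the transitive group $\overline{R}$, so $|\overline{C}|\mid k$. Thus $|C|\mid nk$ as well as $|M|\mid nk$. Now the crucial point: $L/C$ embeds in $\Aut(M)$, so $|L|$ divides $nk\cdot\phi(nk)$, giving
\[
n^{k}\ \big|\ |R|\ \big|\ |M|\,|L|\ \big|\ (nk)^{2}\,\phi(nk).
\]
This divisibility is strong enough: it forces $n^{k-3}<k^{3}$, and the handful of residual small cases $(n,k)$ are eliminated by finer use of the same ingredients (or a short computation). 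Your bound $|N|\mid nk$ captures the first half of this, but without the $\Aut(M)$ step you have no control over $|L|$ beyond the weak exponent estimate.

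Your reduction to the transitive case is also not sound as written. Projecting onto an $\overline{R}$-orbit does give each orbit size $\le 2$ by the transitive case, but your claim that Lemma~\ref{DirectProduct} then forces $R\hookrightarrow L_{a}\times L_{b}$ does not follow: the $L_{i}$ are not copies of a single group, and the intersections $R\cap L_{i}$ may meet pairwise-coprime primes, so their direct product can be cyclic. The paper instead observes that once every $\overline{R}$-orbit has size $\le 2$, every element of $\overline{R}$ has order $1$ or $2$; hence $|\overline{M}|,|\overline{C}|\le 2$, so $|M|\mid 2n$ and $|C|\mid 2n$, and the same $\Aut(M)$ argument gives $n^{k}\mid(2n)^{2}\phi(2n)$, which finishes the intransitive case directly.
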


\begin{proof}
Since $R$ is metacyclic, one can write $R=ML$ with $M=\Z_m$ normal in $R$ and $L=\Z_\ell$. Denote $\C_R(M)\cap L$, the centralizer of $M$ in $L$, by $C$. Note that $C$ is in the center of $R$. Thus, $M\cap K$ and $C\cap K$ are both cyclic normal subgroups of $R$, which we write as $\langle a\rangle$ and $\langle b\rangle$, respectively. By Lemma \ref{NormalCyclic}, both $\langle a\rangle$ and $\langle b\rangle$ are semiregular, and so we have $o(a)\di n$ and $o(b)\di n$ according to Lemma \ref{SemiregCyclic}. Let $\overline{R}=RK/K$, $\overline{M}=MK/K$, $\overline{C}=CK/K$ and $\overline{L}=LK/K$. Then $\overline{M}$ and $\overline{C}$ are both cyclic normal subgroups of $\overline{R}$.

\textbf{Claim.} The conclusion of the lemma holds if $\overline{R}$ is transitive on $\{1,2,\dots,k\}$.

In fact, the transitivity of $\overline{R}$ implies that $\overline{M}$ is semiregular by Lemma \ref{NormalCyclic}, and hence $|\overline{M}|$ divides $k$. Therefore, $m=|M\cap K||\overline{M}|=o(a)|\overline{M}|$ divides $nk$. For the similar reason $|C|$ divides $nk$. Then we deduce from the observation $L/C\lesssim\Aut(M)$ that $\ell$ divides $nk\phi(nk)$, where $\phi$ is the Euler's totient function. Since $|R|$ divides $m\ell$, it follows that $|R|$ divides $(nk)^2\phi(nk)$. As a consequence,
\begin{equation}\label{eq14}
n^k\di(nk)^2\phi(nk).
\end{equation}
Let $p$ be the largest prime divisor of $n$, and $p^r=n_p$ be the $p$-part of $n$. To prove the claim, we exclude the possibility for $k\geqslant3$ by distinguishing the following cases.

\textbf{Case 1.} $k=3$. In this case, both $m$ and $|C|$ divide $nk=3n$, and (\ref{eq14}) turns out to be $n\di9\phi(3n)$. If $p>3$, then the $p$-part of $\phi(3n)$ is $p^{r-1}$, violating the conclusion $n\di9\phi(3n)$. Consequently, $p\leqslant3$, and hence $n=2^s3^t$ with nonnegative integers $s$ and $t$. Since $m\di3n$, it follows that $m=2^e3^f$ with $e\leqslant s$ and $f\leqslant t+1$.

Suppose $e\geqslant2$. Then we have
\[
\Aut(M)=\Aut(\Z_{2^e})\times\Aut(\Z_{3^f})=\Z_{2^{e-2}}\times\Z_2\times\Z_{2\cdot3^{f-1}}.
\]
Since $L/C$ is cyclic and $L/C\lesssim\Aut(M)$, we conclude that $|L/C|$ divides $2^{e-2}3^{f-1}$. This implies that $m\ell=m|C||L/C|$ divides $(3n)\cdot(3n)\cdot(2^{s-2}3^t)=2^{3s-2}3^{3t+2}$, contrary to the condition $n^3\di m\ell$ since $R$ is a transitive subgroup of $G$.

Therefore, $e\leqslant1$, and hence $\Aut(M)=\Aut(\Z_{2^e})\times\Aut(\Z_{3^f})=\Z_{2\cdot3^{f-1}}$. Since $L/C$ is cyclic and $L/C\lesssim\Aut(M)$, we deduce that $|L/C|$ divides $2\cdot3^{f-1}$. It follows that $m\ell=m|C||L/C|$ divides $(3n)\cdot(3n)\cdot(2\cdot3^t)=2^{2s+1}3^{3t+2}$. By the condition $n^3\di m\ell$, we then have $s\leqslant1$. Now any $3$-element of $\Sy_n$ has order dividing $3^t$ since $n<3^{t+1}$. Hence any metacyclic $3$-subgroup of $K\leqslant\Sy_n\times\Sy_n\times\Sy_n$ has order dividing $3^{2t}$, and so $|R|_3=|R\cap K|_3|\overline{R}|_3$ divides $3^{2t}|\Sy_3|_3=3^{2t+1}$. Since $n^3$ divides $|R|$, this implies that $3^{3t}\di3^{2t+1}$, which means $t\leqslant1$. Since $n\geqslant5$, we obtain $s=t=1$ and then $n=2^s3^t=6$. However, computation in \magma~\cite{magma} shows that there is no metacyclic transitive subgroup of the primitive wreath product $\Sy_6\wr\Sy_3$, a contradiction.

\textbf{Case 2.} $k=4$. In this case, (\ref{eq14}) turns out to be $n^2\di16\phi(4n)$. If $p>2$, then the $p$-part of $\phi(4n)$ is $p^{r-1}$, violating the conclusion $n^2\di16\phi(4n)$. Consequently, $p=2$, and hence $n=2^s$. Now any $2$-element of $\Sy_n$ has order at most $n$. Hence any metacyclic $2$-subgroup of $K$ has order dividing $n^2=2^{2s}$ and so $|R\cap K|_2$ divides $n^2$. Since $n^4=2^{4s}$ divides $|R|$ and $|R|$ divides $|R\cap K||\Sy_4|=24|R\cap K|$, it follows that $n^4$ divides $8|R\cap K|_2$. Thereby we obtain $n^4\di8n^2$, which is impossible as $n\geqslant5$.

\textbf{Case 3.} $k=5$. In this case, (\ref{eq14}) turns out to be
\begin{equation}\label{eq15}
n^3\di25\phi(5n).
\end{equation}
In particular, $n^3\leqslant25\phi(5n)<25\cdot5n$. Therefore, $n^2<125$, and so $5\leqslant n\leqslant11$. Checking (\ref{eq15}) for $n=5,\dots,11$ we then have $n=5$. Since any $5$-element of $\Sy_5$ has order $5$, the Sylow $5$-subgroup of $R\cap K$ has order dividing $25$. However, $5^5$ divides $|R|$ and thus divides $|R\cap K||\Sy_5|=120|R\cap K|$. We deduce that $5^5$ divides $5|R\cap K|_5$, a contradiction.

\textbf{Case 4.} $k\geqslant6$. We deduce from (\ref{eq14}) that $n^k\leqslant(nk)^2\phi(nk)<(nk)^3$. Therefore,
\begin{equation}\label{eq16}
n^{k-3}<k^3.
\end{equation}
Since $n\geqslant5$, we obtain from (\ref{eq16}) that $5^{k-3}<k^3$, which implies $k\leqslant6$. Hence $k=6$, and (\ref{eq16}) turns out to be $n^3<6^3$. Consequently, $n=5$, but $(n,k)=(5,6)$ does not satisfy (\ref{eq14}), a contradiction. This proves the claim.

Now suppose that $\overline{R}$ is intransitive on $\{1,\dots,k\}$. Without loss of generality we assume that $\Delta=\{1,\dots,r\}$ is an orbit of $\overline{R}$ on $\{1,\dots,k\}$, where $r<k$. Note that each element $g$ of $R$ can be expressed as $g=(g_1,\dots,g_k)\sigma$ with $g_i\in H_i$ and $\sigma\in\Sy_k$. Define a map $\varphi:g\mapsto(g_1,\dots,g_r)\sigma|_\Delta$, where $\sigma|_\Delta$ is the restriction of $\sigma$ on $\Delta$. It is easy to see that $\varphi$ is a homomorphism and thus $R^\varphi$ is a transitive metacyclic subgroup of $H\wr\Sy_r$. Then the claim above implies $r\leqslant2$. This shows that each orbit of $\overline{R}$ has size $1$ or $2$, whence each element of $\overline{R}$ has order $1$ or $2$. Therefore, $m=o(a)|\overline{M}|$ and $|C|=o(b)|\overline{C}|$ both divide $2n$. It then follows from $L/C\lesssim\Aut(M)$ that $\ell$ divides $2n\phi(2n)$. Hence we have
\begin{equation}\label{eq17}
n^k\di(2n)^2\phi(2n)
\end{equation}
since $n^k$ divides $|R|$ and $|R|$ divides $m\ell$.

Suppose that $k\geqslant3$. Let $p$ be the largest prime divisor of $n$, and $p^r=n_p$ be the $p$-part of $n$. If $p>2$, then the $p$-part of $\phi(2n)$ is $p^{r-1}$, violating (\ref{eq17}). Consequently, $p=2$, and hence $n=2^s$. Now (\ref{eq17}) turns out to be
\[
2^{sk}\di(2^{s+1})^2\phi(2^{s+1})=2^{2s+2}\cdot2^s=2^{3s+2},
\]
from which we conclude $k=3$. Since any $2$-element of $\Sy_n$ has order dividing $n$, the Sylow $2$-subgroup of $R\cap K$ has order dividing $n^2$. However, since $n^3$ divides $|R|$ and $|R|$ divides $|R\cap K||\Sy_3|=6|R\cap K|$, it follows that $n^3$ divides $2|R\cap K|_2$. Thereby we obtain $n^3\di2n^2$, a contradiction. Hence $k\le 2$.
\end{proof}

\begin{lemma}\label{PA-type}
Suppose that $R$ is a metacyclic transitive subgroup of the primitive wreath product $H\wr\Sy_k$, where $H$ is a quasiprimitive permutation group of type almost simple or simple diagonal. Then $k=2$, and $H$ is an almost simple group satisfying part~\emph{(b)} of \emph{Theorem~\ref{Quasiprimitive}}.
\end{lemma}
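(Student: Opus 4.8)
The plan is to pin down $k$ first and then the isomorphism type and structure of $H$. Since $H$ is quasiprimitive of almost simple or simple diagonal type, its socle is either a nonabelian simple group $T$ or a power $T^m$ with $m\geqslant2$, and in both cases the degree $n=|\Delta|$ satisfies $n\geqslant5$: a nonabelian simple group has no faithful transitive action of degree below $5$, while for simple diagonal type $n=|T|^{m-1}\geqslant|T|\geqslant60$. Hence \emph{Lemma~\ref{LemmaPA}} applies to $G=H\wr\Sy_k$ and gives $k\leqslant2$; as $k\geqslant2$ for a product-action wreath product, $k=2$. Write $K=H_1\times H_2$ for the base group and put $D=R\cap K$, so $D\trianglelefteq R$ with $|R:D|\leqslant2$.

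The next step is to extract a metacyclic transitive subgroup of $H$. Let $p_1\colon K\to H_1\cong H$ be the first coordinate projection; then $p_1(D)$ is metacyclic, being a quotient of $D$. I claim $p_1(D)$ is transitive on the first factor $\Delta$. If $D$ is transitive on $\Delta\times\Delta$ this is immediate, since $D\leqslant H_1\times H_2$ acts coordinatewise and $p_1$ maps onto $\Delta$. If $D$ is intransitive, then $|R:D|=2$ and $D$ has exactly two orbits $\calO_1,\calO_2$ on $\Delta^2$, interchanged by some $r\in R\setminus K$ swapping the two coordinates; were $p_1(D)$ intransitive its orbits would be precisely $p_1(\calO_1)$ and $p_1(\calO_2)$, and identifying $p_1(\calO_2)$ with $p_2(\calO_1)$ via $r$ would give $|p_1(\calO_1)|+|p_2(\calO_1)|=n$, whence $|\calO_1|\leqslant|p_1(\calO_1)|\,|p_2(\calO_1)|\leqslant n^2/4$, contradicting $|\calO_1|=n^2/2$. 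So $p_1(D)$ (and likewise $p_2(D)$) is a metacyclic transitive subgroup of $H$, while $H_\delta$ is core-free because $H$ is quasiprimitive. If $H$ is almost simple, then $(H,p_1(D),H_\delta)$ is as in \emph{Theorem~\ref{AS}}. If $H$ is of simple diagonal type, then by \emph{Theorem~\ref{Diagonal}} we have $\Soc(H)=\PSL_2(p)^2$ for a prime $p\geqslant5$ and every metacyclic transitive subgroup of $H$ is isomorphic to the \emph{non-abelian} group $A:=\Z_{p(p+1)/2}.\Z_{p-1}$, which has order $n=|\PSL_2(p)|$.

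To finish I would rule out the simple diagonal alternative. Assume $\Soc(H)=\PSL_2(p)^2$; then $|p_1(D)|=|p_2(D)|=n$, so $D\leqslant p_1(D)\times p_2(D)\cong A\times A$, and hence $|R|\leqslant2n^2$. As $n^2$ divides $|R|$, the subgroup $D=R\cap K$ either equals $A\times A$ or has index $2$ in $A\times A$; in the latter case it contains $(A\times A)'=A'\times A'$, and a short case analysis according to which of the two direct factors of $A\times A$ meets $D$ properly (the abelianization $A^{\mathrm{ab}}\cong\Z_{p-1}$ is cyclic, so $A$ has a unique index-$2$ subgroup $B$) shows that in every case $D'$ contains $B'\times B'$ — taking $B=A$ when $D=A\times A$. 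Since $\Z_p\leqslant B'$ — the torus of a Borel subgroup $\Z_p{:}\Z_{(p-1)/2}$ of $\PSL_2(p)$ acts faithfully on its Sylow $p$-subgroup $\Z_p$ — we get $\Z_p\times\Z_p\leqslant D'$, so $D'$ is non-cyclic and $D$ is not metacyclic. This contradicts $D\leqslant R$ with $R$ metacyclic, so $H$ must be almost simple, and \emph{Theorem~\ref{AS}} applied to the factorization $H=p_1(D)\,H_\delta$ delivers exactly part~(b) of \emph{Theorem~\ref{Quasiprimitive}}.

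The step that needs the most care is the elimination of the simple diagonal case: once \emph{Theorem~\ref{Diagonal}} has reduced it to $\Soc(H)=\PSL_2(p)^2$ with the metacyclic transitive subgroup forced to be the specific non-abelian group $A$, one must check rigorously that neither $A\times A$ nor any subgroup of index $2$ in it can be metacyclic, via the derived-subgroup computation above. Everything else is a direct appeal to \emph{Lemma~\ref{LemmaPA}}, \emph{Theorems~\ref{AS}} and \emph{\ref{Diagonal}}, and the short projection argument for the extraction.
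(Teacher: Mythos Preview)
Your reduction to $k=2$ via Lemma~\ref{LemmaPA} and your projection argument extracting a metacyclic transitive subgroup of $H$ are fine (your orbit-counting inequality is more elaborate than needed, but correct). Your route for eliminating the simple diagonal case is genuinely different from the paper's: where the paper tracks the $p$-elements through a chain of projections into the four $\PSL_2(p)$ factors of $\Soc(H)^2$ and finishes the small primes $p=5,7$ by \textsc{Magma}, you try to kill the case in one blow by showing that any index-$\leqslant2$ subgroup $D$ of $A\times A$ has non-cyclic derived subgroup. This is more conceptual and, if it works, cleaner.

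However, your execution has a real gap. The assertion that $A^{\mathrm{ab}}\cong\Z_{p-1}$ is cyclic, and hence that $A$ has a \emph{unique} subgroup of index~$2$, is false in general. For $p\equiv3\pmod4$ one possible $A$ is $\Pa_1\times\D_{p+1}=(\Z_p{:}\Z_{(p-1)/2})\times\D_{p+1}$; since $(p+1)/2$ is even here, $\D_{p+1}^{\mathrm{ab}}\cong\Z_2^2$, so $A^{\mathrm{ab}}\cong\Z_{(p-1)/2}\times\Z_2^2$ and $A$ has three index-$2$ subgroups. Your ``short case analysis'' therefore does not cover all index-$2$ subgroups of $A\times A$, and the inference $D'\supseteq B'\times B'$ is not justified as written. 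The idea can be rescued without any case split: observe that $A$ has a normal Sylow $p$-subgroup $P\cong\Z_p$ (characteristic in the normal $\Z_{p(p+1)/2}$) on which $A$ acts nontrivially, so $P\leqslant A'$; then $P_1\times P_2\leqslant A_1'\times A_2'\leqslant D$, and since $p_i(D)=A_i$ one gets $[D,P_1\times1]=[A_1,P_1]\times1=P_1\times1$ and likewise on the other side, whence $D'\geqslant P_1\times P_2\cong\Z_p^2$ is non-cyclic. You still need to check that $A$ really does act nontrivially on $P$ in every case arising from Theorem~\ref{Diagonal}; this is clear whenever the Borel complement $\Z_{(p-1)/2}$ (or $\Z_{(p-1)/4}$) is nontrivial, but for the very small primes it uses the explicit description in the proof of Theorem~\ref{Diagonal} (the paper, for its part, simply invokes \textsc{Magma} at $p=5,7$).
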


\begin{proof}
By Lemma~\ref{LemmaPA} we have $k=2$. Let $H_1\times H_2$ be the base group of $H\wr\Sy_2$, where $H_1\leqslant\Sym(\Del_1)$ and $H_2\leqslant\Sym(\Del_2)$ such that $H_1\cong H_2\cong H$ and $\Omega=\Del_1\times\Del_2$. Let $R_i$ be the projection of $R\cap(H_1\times H_2)$ into $H_i$ for $i=1,2$. Then $R_i$ is metacyclic. Since $R$ is transitive on $\Del_1\times\Del_2$, $R\cap(H_1\times H_2)$ has at most two orbits on $\Del_1\times\Del_2$. It follows that at least one of $R_1$ or $R_2$, say $R_1$, is transitive, for otherwise $R\cap(H_1\times H_2)\leqslant R_1\times R_2$ would have at least four orbits on $\Del_1\times\Del_2$, a contradiction.
Hence $H_1$ is a quasiprimitive permutation group on $\Del_1$ containing a metacyclic transitive subgroup $R_1$. If $H$ is almost simple, then it satisfies part~(b) of Theorem~\ref{Quasiprimitive}. To complete the proof, we suppose for a contradiction that $H$ is of type simple diagonal.

Since $R\cap(H_1\times H_2)$ is metacyclic, one can write $R\cap(H_1\times H_2)=ML$ with $M$ cyclic normal in $R\cap(H_1\times H_2)$ and $L$ cyclic. For $i=1,2$, let $M_i$ and $L_i$ be the projection of $M$ and $L$ into $H_i$, respectively. By Theorem~\ref{Diagonal}, $\Soc(H_1)=\PSL_2(p)^2$ for some prime $p\geqslant5$, and $R_1=\Z_{p(p+1)/2}.\Z_{p-1}$. Hence $|\Del_1|=|\Del_2|=|\PSL_2(p)|$, and $M_1$ contains $\Z_p$ while $L_1$ does not. Noticing that the Sylow $p$-subgroup of $H_1\times H_2$ is $\Z_p^4$, we conclude that $|M|_p=|L|_p=p$ since $p^2=|\PSL_2(p)|_p^2$ divides $|M||L|$. Consequently, $L_2\geqslant\Z_p$.

Note that $R\leqslant(R_1\times R_2).\Z_2$ and $|R|$ is divisible by $|\PSL_2(p)|^2$. We deduce that $|R_2|$ is divisible by $|\PSL_2(p)|/2$ in view of $|R_1|=|\PSL_2(p)|$. Write $N_2=\Soc(H_2)=T_1\times T_2$ with $T_1\cong T_2\cong\PSL_2(p)$, and denote by $\pi_i$ the projection of $T_1\times T_2$ onto $T_i$ for $i=1,2$. The consequence of the previous paragraph implies that one of $\pi_1(L_2\cap N_2)$ or $\pi_2(L_2\cap N_2)$ contains $\Z_p$. Without loss of generality, assume that $\pi_1(L_2\cap N_2)\geqslant\Z_p$. As $\Z_p\leqslant\pi_1(L_2\cap N_2)$ normalizes $\pi(M_2\cap N_2)$ and $\pi_1(R_2\cap N_2)/\pi_1(M_2\cap N_2)$ is cyclic, we conclude that $\pi_1(R_2\cap N_2)=\Z_p$ by the classification of subgroups of $\PSL_2(p)$. Hence $p^2-1$ divides $8|\pi_2(R_2\cap N_2))|$ since $R_2\leqslant(\pi_1(R_2\cap N_2)\times\pi_2(R_2\cap N_2)).\Z_2^2$ and $|R_2|$ is divisible by $|\PSL_2(p)|/2$. Inspecting the subgroups of $\PSL_2(p)$ one derives that $p=5$ or $7$. For $p=5$, computation in \magma~\cite{magma} shows that $(\PSL_2(p)^2.\Z_2^2)\wr\Sy_2$ has no transitive metacyclic subgroup. For $p=7$, computation in \magma~\cite{magma} shows that there is no cyclic subgroups $M_2,L_2$ of $\PSL_2(p)^2.\Z_2^2$ such that $\Z_p\leqslant L_2$ normalizes $M_2$ and $|M_2L_2|$ is divisible by $|\PSL_2(p)|/2$. This contradiction completes the proof.
\end{proof}

\begin{theorem}\label{PA}
Let $G\leqslant\Sym(\Ome)$, and $R$ be a metacyclic transitive subgroup of $G$. Suppose that $G$ is quasiprimitive of product action type. Then $G\lesssim H\wr\Sy_2$ with $H\leqslant\Sym(\Del)$ almost simple and $\Soc(G)=\Soc(H)^2$. Moreover, for $\delta\in\Del$, one of the following holds:
\begin{itemize}
\item[(a)] $\A_n\leqslant H\leqslant\Sy_n$ and $\A_{n-1}\leqslant H_\delta\leqslant\Sy_{n-1}$.
\item[(b)] $\PSL_n(q)\leqslant H\leqslant\PGaL_n(q)$ and $q^{n-1}{:}\SL_{n-1}(q)\trianglelefteq\Soc(H)_\delta\leqslant\Pa_1$ or $\Pa_{n-1}$.
\item[(c)] $(H,H_\delta,R)$ lies in \emph{Table} $\ref{tab3}$.
\end{itemize}
\end{theorem}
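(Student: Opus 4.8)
The plan is to combine the O'Nan--Scott--Praeger description of product action type with the two results already proved in this section, Lemma~\ref{PA-type} and Theorem~\ref{AS}.

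First I would use the structure of product action type (see \cite[Section~5]{q-gps}): such a quasiprimitive $G$ is permutationally isomorphic to a subgroup of a primitive wreath product $H\wr\Sy_k$ acting on $\Del^k$, where $H\leqslant\Sym(\Del)$ is quasiprimitive of type almost simple or simple diagonal and $\Soc(G)=\Soc(H)^k$. Since $R\leqslant G\leqslant H\wr\Sy_k$ is metacyclic and transitive on $\Del^k$, Lemma~\ref{PA-type} applies: it gives $k=2$ and forces $H$ to be almost simple, so $G\lesssim H\wr\Sy_2$ and $\Soc(G)=\Soc(H)^2$, which is the first assertion of the theorem.

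Next I would recover a factorization of $H$. With $H_1\times H_2$ the base group of $H\wr\Sy_2$, $H_1\cong H_2\cong H$ acting on $\Del_1\cong\Del_2\cong\Del$, and $\Omega=\Del_1\times\Del_2$, the proof of Lemma~\ref{PA-type} shows that the projection $R_1$ of $R\cap(H_1\times H_2)$ into $H_1$ is a metacyclic \emph{transitive} subgroup of $H\cong H_1$. Hence $H=R_1H_\delta$ is a factorization of the almost simple group $H$ with metacyclic factor $R_1$ and core-free factor $H_\delta$, so Theorem~\ref{AS} applies with $(G,A,B)=(H,R_1,H_\delta)$. Using $B\cap\Soc(H)=H_\delta\cap\Soc(H)=\Soc(H)_\delta$, the three cases of Theorem~\ref{AS} translate into: part~(a) here when $\Soc(H)=\A_n$ (so $\A_n\leqslant H\leqslant\Sy_n$ and $\A_{n-1}\leqslant H_\delta\leqslant\Sy_{n-1}$); part~(b) when $\Soc(H)=\PSL_n(q)$; and, otherwise, $(H,R_1,H_\delta)$ lies in Table~\ref{tab1}, Table~\ref{tab2}, or Table~\ref{tab4}.

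It remains to treat the last case and cut it down to Table~\ref{tab3}; this is where the bulk of the work lies. Here $H$ and $H_\delta$ range over a finite explicit list of small groups, and for each such pair I would enumerate, using \magma~\cite{magma}, all metacyclic subgroups $R$ of $H\wr\Sy_2$ that are transitive on $\Del^2$ (equivalently, have order divisible by $|\Del|^2$) and that lie in some quasiprimitive subgroup of $H\wr\Sy_2$ with socle $\Soc(H)^2$; the surviving triples $(H,H_\delta,R)$ are precisely those in Table~\ref{tab3}, and several rows of Tables~\ref{tab1}--\ref{tab4} admit no such $R$ and so drop out. I expect the main obstacle to be exactly this finite case-check: one must correctly encode the transitivity condition inside the wreath product (in particular, distinguishing whether or not $R$ interchanges the two coordinates) and then run the \magma\ computations, the structural work having already been done by Lemma~\ref{PA-type} and Theorem~\ref{AS}.
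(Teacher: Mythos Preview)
Your overall strategy matches the paper's: reduce to $k=2$ and $H$ almost simple via Lemma~\ref{PA-type}, extract a metacyclic factorization of $H$ via a transitive projection $R_1$, apply Theorem~\ref{AS}, and then eliminate the leftover rows of Tables~\ref{tab1}, \ref{tab2}, \ref{tab4}. The structural part is fine.

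The gap is in your last step. You write that ``$H$ and $H_\delta$ range over a finite explicit list of small groups'' and propose to settle everything by a \magma\ enumeration. This is not true: rows~1--3 of Table~\ref{tab4} are \emph{infinite families}, parametrized by an arbitrary prime $p$ (namely $H=\A_p$ or $\Sy_p$ with $H_\delta$ of type $\Sy_{p-2}$ or $\Sy_{p-2}\times\Sy_2$, and $R_1=\Z_p{:}\Z_{(p-1)/2}$ or $\Z_p{:}\Z_{p-1}$). A computer search cannot dispose of these. The paper handles them by a short hand argument: write $R^{\calB}\cap N=ML$ with $M$ cyclic normal and $L$ cyclic; since $R_i\leqslant\AGL_1(p)$ for $i=1,2$, the projections force $M\leqslant\Z_p^2$ (hence $M=\Z_p$, being cyclic) and $L\leqslant\Z_{p-1}^2$, so $p^2\nmid|R^{\calB}\cap N|$, contradicting transitivity on $\Del^2$. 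You need some argument of this kind before the finite residue can be fed to \magma.

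A secondary point: for the genuinely finite rows the paper does not rely solely on machine computation either. It first proves (your ``Claim'') that \emph{both} projections $R_1$ and $R_2$ satisfy Theorem~\ref{AS}, by splitting into the cases $R^{\calB}\leqslant N$ and $R^{\calB}\nleqslant N$; this extra symmetry is then used repeatedly to rule out rows by elementary divisibility and isomorphism constraints on $R^{\calB}\cap N$ inside $R_1\times R_2$, with \magma\ invoked only for a handful of cases (rows~4 of Table~\ref{tab1} and~20, 24--25 of Table~\ref{tab4}, which produce Table~\ref{tab3}). Your purely computational plan would also work for these finite rows, but you should be aware that the paper's argument is largely by hand.
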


\begin{table}[htbp]
\caption{}\label{tab3}
\centering
\begin{tabular}{|l|l|l|l|}
\hline
row & $H$ & $H_\delta$ & $R$\\
\hline
1 & $\PSL_2(11)$ & $\A_5$ & $\Z_{11}^2$, $\Z_{11}\wr\Sy_2$, $\Z_{11}\times(\Z_{11}{:}\Z_5)$ \\
\hline
2 & $\M_{11}$ & $\M_{10}$ & $\Z_{11}^2$, $\Z_{11}\wr\Sy_2$, $\Z_{11}\times(\Z_{11}{:}\Z_5)$ \\
\hline
3 & $\M_{23}$ & $\M_{22}$ & $\Z_{23}^2$, $\Z_{23}\wr\Sy_2$, $\Z_{23}\times(\Z_{23}{:}\Z_{11})$ \\
\hline
\end{tabular}
\end{table}

\begin{proof}
Let $\calB=\Del^k$ be a $G$-invariant partition of $\Ome$ such that $G\cong G^{\calB}\leqslant H\wr\Sy_k$, where $H\wr\Sy_k$ is a primitive wreath product with almost simple group $H\leqslant\Sym(\Del)$. Notice that $R^{\calB}$ is a metacyclic transitive subgroup of $G^{\calB}$. Then by Lemma~\ref{PA-type} we have $k=2$. As a consequence, $\Soc(G)=\Soc(H)^2$. Let $\delta\in\Del$, $N=H_1\times H_2$ be the base group of $H\wr\Sy_2$, where $H_1\leqslant\Sym(\Del)$ and $H_2\leqslant\Sym(\Del)$ such that $H_1\cong H_2\cong H$, and $R_i$ be the projection of $R^{\calB}\cap N$ into $H_i$ for $i=1,2$. Then $R_i$ is metacyclic.

Assume that $R^{\calB}\nleqslant N$. Then $R^{\calB}N/N=\Z_2$ and hence $R_1\cong R_2$. Since $R^{\calB}$ is transitive on $\Del^2$, $R^{\calB}\cap N$ has at most two orbits on $\Del^2$. It follows that at least one of $R_1$ or $R_2$ is transitive on $\Del$, for otherwise $R^{\calB}\cap N\leqslant R_1\times R_2$ would have at least four orbits on $\Del^2$. As $R_1\cong R_2$, we then deduce that $(H,R_1,H_\delta)$ and $(H,R_2,H_\delta)$ are both as described in Theorem~\ref{AS} (as $(G,A,B)$ there). Now assume that $R^{\calB}\leqslant N$. Then $R^{\calB}=R^{\calB}\cap N\leqslant R_1\times R_2$. Since $R^{\calB}$ is transitive on $\Del^2$, it follows that both $R_1$ and $R_2$ are transitive on $\Del$, and so both $(H,R_1,H_\delta)$ and $(H,R_2,H_\delta)$ are as described in Theorem~\ref{AS} (as $(G,A,B)$ there). Thus we have proved:

\textbf{Claim.} $(H,R_1,H_\delta)$ and $(H,R_2,H_\delta)$ are as described in Theorem~\ref{AS} (as $(G,A,B)$ there).

If $(H,H_\delta)$ is as described in part~(a) or~(b) of Theorem~\ref{AS}, then part~(a) or~(b) of the theorem holds. To complete the proof, by our claim we only need to deal with the case where $(H,H_\delta)$ lies in Table~\ref{tab1}, Table~\ref{tab2} or Table~\ref{tab4} (as $(G,B)$ there).

\textbf{Case 1.} $(H,H_\delta)$ lies in Table~\ref{tab1}.

For $(H,H_\delta)$ as in row~1 of Table~\ref{tab1}, viewing $\PSL_2(7)\cong\PSL_3(2)$ we derive that part~(b) of the theorem holds.

Assume that $(H,H_\delta)$ lies in row~2 of Table~\ref{tab1}. Then by the claim, $R_i\leqslant\AGL_1(7)$ for $i=1,2$ and so $R^{\calB}\cap N\leqslant\AGL_1(7)^2$. If $R^{\calB}\leqslant N$, then since $R^{\calB}$ is transitive on $\Del^2$, we have $R^{\calB}\cap N=R^{\calB}$ with order divisible by $|\Del|^2=|H|^2/|H_\delta|^2=196$. However, there is no metacyclic subgroup of $\AGL_1(7)^2$ of order divisible by $196$. If $R^{\calB}\nleqslant N$, then $R^{\calB}\cap N$ has index $2$ in $R^{\calB}$, which implies that $|R^{\calB}\cap N|$ is divisible by $|\Del|^2/2=98$ and the intersections of $R^{\calB}\cap N$ with $H_1$ and $H_2$ are isomorphic. However, there is no metacyclic subgroup of $\AGL_1(7)^2$ of order divisible by $98$ whose intersections with the two copies of $\AGL_1(7)$ are isomorphic. Thus $(H,H_\delta)$ does not lie in row~2 of Table~\ref{tab1}. Similar argument shows that $(H,H_\delta)$ does not lie in rows~7, 8 and~10 of Table~\ref{tab1}.

Assume that $(H,H_\delta)$ lies in row~3 of Table~\ref{tab1}. Then by the claim, $R_i=\Z_{11}{:}\Z_5$ for $i=1,2$ and so $R^{\calB}\cap N\leqslant(\Z_{11}{:}\Z_5)^2$. Since $R^{\calB}$ is transitive on $\Del^2$, we deduce that $2|R^{\calB}\cap N|$ is divisible by $|\Del|^2=55^2$. However, there is no such metacyclic subgroup $R^{\calB}\cap N$ of $(\Z_{11}{:}\Z_5)^2$. Thus $(H,H_\delta)$ does not lie in row~3 of Table~\ref{tab1}. In the similar way we see that $(H,H_\delta)$ does not lie in rows~5--6 and~11--16 of Table~\ref{tab1}.

Assume that $(H,H_\delta)$ lies in row~4 of Table~\ref{tab1}. Then searching in \magma~\cite{magma} for metacyclic transitive subgroups of $H\wr\Sy_2$ shows that $R\cong R^{\calB}$ lies in row~1 of Table~\ref{tab3}.

Assume that $(H,H_\delta)$ lies in row~9 of Table~\ref{tab1}. Then by the claim, $R_i=\Z_{19}{:}\Z_9$ for $i=1,2$ and so $R^{\calB}\cap N\leqslant(\Z_{19}{:}\Z_9)^2$. Since $R^{\calB}$ is transitive on $\Del^2$, we deduce that $2|R^{\calB}\cap N|$ is divisible by $|\Del|^2=57^2$. However, there is no such metacyclic subgroup $R^{\calB}\cap N$ of $(\Z_{19}{:}\Z_9)^2$ whose projections into the two copies of $\Z_{19}{:}\Z_9$ are both surjective. Thus $(H,H_\delta)$ does not lie in row~9 of Table~\ref{tab1}.

\textbf{Case 2.} $(H,H_\delta)$ lies in Table~\ref{tab2}.

Assume that $(H,H_\delta)$ lies in row~1 of Table~\ref{tab2}. Then by the claim, $R_i=\Z_{13}{:}\Z_3$ for $i=1,2$ and so $R^{\calB}\cap N\leqslant(\Z_{13}{:}\Z_3)^2$. Since $R^{\calB}$ is transitive on $\Del^2$, we deduce that $2|R^{\calB}\cap N|$ is divisible by $|\Del|^2=39^2$. However, there is no such metacyclic subgroup $R^{\calB}\cap N$ of $(\Z_{13}{:}\Z_3)^2$. Thus $(H,H_\delta)$ does not lie in row~1 of Table~\ref{tab2}. In the similar way we see that $(H,H_\delta)$ does not lie in rows~2--7 of Table~\ref{tab2}.

\textbf{Case 3.} $(H,H_\delta)$ lies in Table~\ref{tab4}.

Assume that $(H,H_\delta)$ lies in rows~1--3 of Table~\ref{tab4}. Then $H\leqslant\Sy_p$, $|\Delta|=p(p-1)/2$ or $p(p-1)$, and $R_i=\Z_p{:}\Z_{(p-1)/2}$ or $\Z_p{:}\Z_{p-1}$ for $i=1,2$. Since $R^{\calB}\cap N$ is metacyclic, we have $R^{\calB}\cap N=ML$ for a cyclic normal subgroup $M$ and a cyclic subgroup $L$. This implies that $R_1=(MH_2/H_2)(LH_2/H_2)$ with a cyclic normal subgroup $MH_2/H_2$ and a cyclic subgroup $LH_2/H_2$. As $\Z_p{:}\Z_{(p-1)/2}\leqslant R_1\leqslant\AGL_1(p)$, we then deduce that $MH_2/H_2=\Z_p$ and $LH_2/H_2=\Z_{(p-1)/2}$ or $\Z_{p-1}$. Similarly, $MH_1/H_1=\Z_p$ and $LH_1/H_1=\Z_{(p-1)/2}$ or $\Z_{p-1}$. As a consequence, $M\leqslant\Z_p^2$ and $L\leqslant\Z_{p-1}^2$. Hence $M=\Z_p$ and $|L|$ is not divisible by $p$. It follows that $|M||L|$ is not divisible by $p^2$ and so $|R^{\calB}\cap N|$ is not divisible by $p^2$. However, the transitivity of $R^{\calB}$ on $\Del^2$ implies that $|R^{\calB}|$ is divisible by $p^2$ and thus $|R^{\calB}\cap N|$ is divisible by $p^2$, a contradiction.

Note that $\A_5\cong\PSL_2(5)$, $\A_6\cong\PSL_2(9)$ and $\A_8\cong\PSL_4(2)$. Then for $(H,H_\delta)$ as in rows~4--6 and~8--9 of Table~\ref{tab4}, part~(b) of the theorem holds.

Assume that $(H,H_\delta)$ lies in row~7 of Table~\ref{tab4}. Then by the claim, $R_i\leqslant\Z_{10}{:}\Z_4$ for $i=1,2$ and so $R^{\calB}\cap N\leqslant(\Z_{10}{:}\Z_4)^2$. If $R^{\calB}\leqslant N$, then since $R^{\calB}$ is transitive on $\Del^2$, we have $R^{\calB}\cap N=R^{\calB}$ with order divisible by $|\Del|^2=|H|^2/|H_\delta|^2=400$. However, there is no metacyclic subgroup of $(\Z_{10}{:}\Z_4)^2$ of order divisible by $400$. If $R^{\calB}\nleqslant N$, then $R^{\calB}\cap N$ has index $2$ in $R^{\calB}$, which implies that $|R^{\calB}\cap N|$ is divisible by $|\Del|^2/2=200$ and the intersections of $R^{\calB}\cap N$ with $H_1$ and $H_2$ are isomorphic. However, there is no metacyclic subgroup of $(\Z_{10}{:}\Z_4)^2$ of order divisible by $200$ whose intersections with the two copies of $\Z_{10}{:}\Z_4$ are isomorphic. Thus $(H,H_\delta)$ does not lie in row~7 of Table~\ref{tab4}. Similar argument shows that $(H,H_\delta)$ does not lie in row~23 of Table~\ref{tab4}.

Assume that $(H,H_\delta)$ lies in row~10 of Table~\ref{tab4}. Then by the claim, $R_i=\D_{30}$, $\Z_5\times\Sy_3$ or $\AGL_1(5)\times\Z_3$ for $i=1,2$. In particular, $R_i\leqslant\AGL_1(5)\times\Sy_3$ for $i=1,2$. If $R^{\calB}\leqslant N$, then since $R^{\calB}$ is transitive on $\Del^2$, we have $R^{\calB}\cap N=R^{\calB}$ with order divisible by $|\Del|^2=|H|^2/|H_\delta|^2=900$. However, computation in \magma~\cite{magma} shows that metacyclic subgroups of $(\AGL_1(5)\times\Sy_3)^2$ of order divisible by $900$ must have order $900$, but $H^2$ does not have a metacyclic regular subgroup, a contradiction. If $R^{\calB}\nleqslant N$, then $R^{\calB}\cap N$ has index $2$ in $R^{\calB}$, which implies that $|R^{\calB}\cap N|$ is divisible by $|\Del|^2/2=450$ and the intersections of $R^{\calB}\cap N$ with $H_1$ and $H_2$ are isomorphic. However, there is no metacyclic subgroup of $\D_{30}^2$, $(\Z_5\times\Sy_3)^2$ or $(\AGL_1(5)\times\Z_3)^2$ of order divisible by $450$ whose intersections with the two factors are isomorphic. Thus $(H,H_\delta)$ does not lie in row~10 of Table~\ref{tab4}.

Assume that $(H,H_\delta)$ lies in row~11 of Table~\ref{tab4}. Then by the claim, $R_i=\Z_{31}{:}\Z_5$ for $i=1,2$ and so $R^{\calB}\cap N\leqslant(\Z_{31}{:}\Z_5)^2$. Since $R^{\calB}$ is transitive on $\Del^2$, we deduce that $2|R^{\calB}\cap N|$ is divisible by $|\Del|^2=155^2$. However, there is no such metacyclic subgroup $R^{\calB}\cap N$ of $(\Z_{31}{:}\Z_5)^2$. Thus $(H,H_\delta)$ does not lie in row~11 of Table~\ref{tab4}. In the similar way we see that $(H,H_\delta)$ does not lie in rows~12--19, 21--22 and~26 of Table~\ref{tab4}.

Assume that $(H,H_\delta)$ lies in row~20 of Table~\ref{tab4}. Then searching in \magma~\cite{magma} for metacyclic transitive subgroups of $H\wr\Sy_2$ shows that $R\cong R^{\calB}$ lies in row~2 of Table~\ref{tab3}.

Assume that $(H,H_\delta)$ lies in row~24 or~25 of Table~\ref{tab4}. Then by the claim, $R_i\leqslant\Z_{23}{:}\Z_{11}$ for $i=1,2$. For $H_\delta=\PSL_3(4){:}\Z_2$ or $\Z_2^4{:}\A_7$, then in the similar way to exclude row~11 of Table~\ref{tab4} we see that it is not possible. Hence $H_\delta=\M_{22}$. It follows that $R^{\calB}\cap N$ is a metacyclic subgroup of $(\Z_{23}{:}\Z_{11})^2$ with $2|R^{\calB}\cap N|$ divisible by $|\Del|^2=23^2$. This implies that $R^{\calB}\cap N=\Z_{23}^2$ or $\Z_{23}\times(\Z_{23}{:}\Z_{11})$, and so $R\cong R^{\calB}$ lies in row~3 of Table~\ref{tab3}. The proof is thus completed.
\end{proof}

Now we are able to give a proof of Theorem~\ref{Quasiprimitive}.

\vskip0.1in
\noindent {\bf Proof of Theorem~\ref{Quasiprimitive}:}
According to the O'Nan-Scott-Praeger theorem stated in~\cite[Section 5]{q-gps}, $G$ is of one of the eight O'Nan-Scott types there.
Moreover, $G$ is not of type twisted wreath or holomorph compound by Lemma \ref{NormalSemireg}, and is not of type compound diagonal by Lemma \ref{PA-type}.

If $G$ is affine, then the situation is as in Theorem \ref{HA}.

If $G$ is almost simple, then writing
%$L=\Soc(G)$,
$A=R$ and $B=G_\omega$, we have $G=AB$.
Note that $A$ and $B$ are core-free in $G$. Thus the factorization $G=AB$ satisfies Theorem \ref{AS}.
%The transitivity of $L$ also implies that $G=LB$.

If $G$ is of type holomorph simple or simple diagonal, then by Theorem \ref{Diagonal}, part (c) of Theorem~\ref{Quasiprimitive} holds.

If $G$ is of product action type, then the situation is as in Theorem \ref{PA}. This completes the proof.
\qed


\begin{thebibliography}{99}

\bibitem{blackburn1982finite}
N. Blackburn and B. Huppert, {\it Finite groups II}, Springer-Verlag, Berlin Heidelberg, New York, 1982.

\bibitem{magma}
W. Bosma, J. Cannon and C. Playoust, The Magma algebra system I: The user language,
{\it J. Symb. Comp.} {\bf 24 (3/4)} (1997), 235--265. Also see the Magma home page at
http://www.maths.usyd.edu.au:8000/u/magma/.

\bibitem{Burnside}
W. Burnside, On some properties of groups of odd order,
{\it Proc. London Math. Soc.}
{\bf 33} (1900), 162--185.

\bibitem{atlas}
J. H. Conway, R. T. Curtis, S. P. Norton, R. A. Parker and R. A. Wilson, {\it Atlas of Finite Groups},
Oxford Univ. Press, London/New York, 1985.

\bibitem{cooperstein1978minimal}
B. N. Cooperstein, Minimal degree for a permutation representation
of a classical group, {\it Israel J. Math.}, 30 (1978), no. 3, 213--235.

\bibitem{DM-book}
J. D. Dixon and B. Mortimer, {\it Permutation Groups}, Springer, Hong Kong, New York, 1996.

\bibitem{Giudici}
M. Giudici, factorizations of sporadic simple groups,
{\it J. Algebra} {\bf 304} (2006), 311--323.

\bibitem{HLS}
C. Hering, M. Liebeck and J. Saxl,
The factorizations of the finite exceptional groups of Lie type,
{\it J. Algebra} {\bf 106} (1987), 517--527.

\bibitem{huppert1967endliche}
B. Huppert, {\it Endliche Gruppen I}, Springer, 1967.

\bibitem{Jones72}
G. Jones, Regular subgroups of uniprimitive permutation groups of order $p^3$,
where $p$ is a prime, {\it Quart. J. Math. Oxford}
{\bf 23} (1972), 325--336.


\bibitem{Jones}
G. Jones, Cyclic regular subgroups of primitive permutation groups,
{\it J. Group Theory} {\bf 5} (2002), no. 4, 403--407.

\bibitem{Kantor}
W. M. Kantor,  $k$-homogeneous groups. {\it Math. Z.} {\bf 124} (1972), 261--265.

\bibitem{Kazarin}
L. S. Kazarin, Groups that can be represented as a product of two
solvable subgroups, {\it Comm. Algebra} {\bf 14} (1986), 1001--1066.

\bibitem{Abel-B-gps}
C. H. Li, The finite primitive permutation groups containing an abelian regular subgroup,
{\it Proc. London Math. Soc.} {\bf 87} (2003), 725--747.

\bibitem{LS05}
C. H. Li, A. Seress, The primitive permutation groups of square-free degree,
{\it Bull. London Math. Soc.} {\bf 35}(5) (2003), 635--644.

\bibitem{rotary}
C. H. Li, Finite edge-transitive Cayley graphs and rotary Cayley maps,
{\it Trans. Amer. Math. Soc.} {\bf 358} (2006), 4605--4635.

%\bibitem{Li-Lu-Pan}
%C. H. Li, Z. P. Lu and J. Pan,
%Finite vertex-primitive edge-transitive metacirculants,
%{\it J. Algebraic Combin.} {\bf 40} (2014), 785--804.


\bibitem{Song}
C. H. Li, S. Song and D. J. Wang,
A characterization of metacirculants,
{\it J. Combin. Theory Ser. A} {\bf 120} (2012), 39--48.

\bibitem{BOOK}
M. Liebeck, C. E. Praeger and J. Saxl,
The maximal factorizations of the finite simple groups and
their automorphism groups,
{\it Mem. Amer. Math. Soc.} {\bf 86} (1990), no. 432, iv+151 pp.

\bibitem{liebeck1996factorizations}
M. W. Liebeck, C. E. Praeger and J. Saxl,
On factorizations of almost simple groups,
{\it J. Algebra} {\bf185} (1996), no. 2, 409--419.

\bibitem{LPS-subgps}
M. Liebeck, C. E. Praeger and J. Saxl,
Transitive subgroups of primitive permutation groups,
{\it J. Algebra} {\bf234} (2000), no. 2, 291--361.

\bibitem{regular-subgps}
M. Liebeck, C. E. Praeger and J. Saxl,
On regular subgroups of primitive permutation groups,
{\it Mem. Amer. Math. Soc.}  {\bf 203} (2010), no. 952, vi+74 pp.

\bibitem{MS}
D. Maru\v si\v c and P. \v Sparl, On quartic half-arc-transitive metacirculants.
{\it J. Algebraic Combin}. {\bf 28} (2008), no. 3, 365--395.

\bibitem{Nagai}
O. Nagai, On transitive groups that contain non-abelian regular subgroups,
{\it Osaka Math. J.} {\bf 13} (1961), 199--207.

\bibitem{Neumman}
P. Neumman, Helmut Wielandt on permutation groups,
{\it Helmut Wielandt: mathematical works} (ed. B. Huppert and H. Schneider, Walter De Gruyter, Berlin, 1994), 3--20.

\bibitem{Praeger-qp}
C. E. Praeger, An O'Nan-Scott theorem for finite quasiprimitive
permutation groups and an application to 2-arc transitive graphs,
{\it J. London. Math. Soc.} {\bf 47} (1992), 227--239.

\bibitem{q-gps}
C. E. Praeger,
Finite quasiprimitive graphs, in {\it Surveys in combinatorics, 1997} (London), 65--85,
London Math. Soc. Lecture Note Ser., 241, Cambridge Univ. Press, Cambridge, 1997.

\bibitem{Wielandt1949}
H. Wielandt, Zur Theorie der einfach transitiven Permutationsgruppen, II
{\it Math. Z.} {\bf 52}, 384--393.


\bibitem{Wielandt}
H. Wielandt, {\it Finite Permutation Groups}, Academic Press, New York, 1964.

\bibitem{Schur}
I. Schur, Zur Theorie der einfach transitive Permutaionsgruppen,
{\it S. B. Preuss. Akad., Phys. math. Kl.}
(1933), 598--623.

\bibitem{Scott}
W. R. Scott, Solvable factorizable groups,
{\it Illinois J. Math.}
{\bf 1} (1957), 389--394.



\end{thebibliography}
\end{document}